\newtheorem{theorem}{Theorem}
\newtheorem{lemma}{Lemma}
\newtheorem{proposition}{Proposition}
\newtheorem{definition}{Definition}
\newtheorem*{corollary}{Corollary}
\newtheorem{remark}{Remark}
\begin{document}

\title{On sequences of Toeplitz matrices over finite fields}
\author{Geoffrey Price* and Myles Wortham}
\address{\noindent Department of Mathematics, United States Naval Academy, Annapolis MD 21402\\
Naval Base San Diego, San Diego CA  92136}
\email{glp@usna.edu}
\thanks{* supported in part by the United States Naval Research Laboratory, Washington, DC}
\date{3 September 2019}
\begin{abstract}
For each non-negative integer $n$ let $\mathcal{A}_n$ be an $n+1$ by $n+1$ Toeplitz matrix over a finite field, $F$, and suppose for each $n$ that $\mathcal{A}_n$ is embedded in the upper left corner of $\mathcal{A}_{n+1}$.  We study the structure of the sequence $\nu = \{\nu_n :n \in \mathbb{Z}^+\}$, where $\nu_n = \text{null}(\mathcal{A}_n)$ is the nullity of $\mathcal{A}_{n}$.  For each $n\in \mathbb{Z}^+$ and each nullity pattern
$\nu_0,\nu_1,\dots,\nu_n$, we count the number of strings of Toeplitz matrices $\mathcal{A}_0,\mathcal{A}_1,\dots,\mathcal{A}_{n}$ with this pattern.  As an application we present an elementary proof of a result of D. E. Daykin on the number of $n\times n$ Toeplitz matrices over $GF(2)$ of any specified rank. (This is a corrected version of the paper published in Linear Algebra and Its Applications, $\bold{561}$ \, $(2019), 63-80$.)\\
2000 MSC Classification  15A33, 15A57\\
Keywords:  Toeplitz matrix, nullity sequence, rank, finite fields
\end{abstract}
\maketitle

\section{Introduction}\label{S: Intro}

In this paper we consider sequences $\{ \mathcal{A}_n \}$ of Toeplitz matrices with entries in an arbitrary finite field $F$.  The sequence $\{ \mathcal{A}_n \}$ is uniquely determined by a pair of sequences $a_0,a_1,\dots $ and $b_1,b_2,\dots$ over $F$, so that $\mathcal{A}_n$ is the $(n+1)\times(n+1)$ matrix
\begin{equation*}
\left[
\begin{matrix}
a_0 &  a_1 & a_2 & \dots &  a_{n-1}&a_n\\
b_{1} & a_0 & a_1 & \dots &  a_{n-2}&a_{n-1}\\
b_{2} & b_{1} & a_0 & \dots &  a_{n-3}&a_{n-2}\\
\vdots & \vdots & \vdots & \ddots &  \vdots & \vdots \\
b_{n} & b_{n-1} & b_{n-2} & \dots & b_{1} &a_0
\end{matrix}
\right]\tag{1.1}
\end{equation*}

\noindent In particular, $\mathcal{A}_n$ lies in the top left corner (as well as the bottom right corner) of $\mathcal{A}_{n+1}$.

For each sequence $\{ \mathcal{A}_n \}$ let $\nu =\{\nu_n\}$ be the corresponding sequence of nullities, i.e. $\nu_n$ is the nullity of $\mathcal{A}_n$.  As in \cite{CP}, where K. Culler and one of the authors considered nullity sequences of skew centro-symmetric matrices $\mathcal{A}_n$ (see also \cite{PP},\cite{PT}), here we ascertain the patterns of the nullity sequences which can occur for Toeplitz matrices.  We present an elementary proof which shows that $\{ \nu_n \}$ is a concatenation of strings of the form $0,0,\dots,0$ or $1,2,\dots,d-1,d,d,\dots,d,d-1,d-2,\dots,1,0$, where $d$ can repeat any finite number of times; or the nullity sequence may consist of a concatenation of finitely many strings of this type, followed by $1,2,\dots$, see Theorem \ref{T: null_string}   . These patterns were observed in \cite{HR84} for sequences of Toeplitz matrices over $\mathbb{C}$. We analyze the structure of the (right) kernels of matrices which satisfy these patterns and use this to determine the number of matrices $\mathcal{A}_0,\dots,\mathcal{A}_{n}$ which satisfy a specific nullity pattern, $\nu_0,\dots,\nu_{n}$.  We note that the structure of kernels of Toeplitz matrices over $\mathbb{C}$ is carried out in \cite{HR84}, and a similar analysis is carried out for kernels of centro-symmetric and skew centro-symmetric matrices over finite fields (of characteristic not equal to $2$) in \cite{HR02}.  We believe that our results on the number of strings of Toeplitz matrices $\{\mathcal{A}_n \}$ are new, however.  As an application of our results we present a straightforward argument which
determines the number of $n\times n$ Toeplitz matrices over $GF(2)$ of any specified rank, a result obtained earlier over arbitrary finite fields by Daykin, \cite{Day}.

For results on extensions $\mathcal{A}_{n+1}$ of Toeplitz matrices $\mathcal{A}_n$ with the same rank over the complex numbers, see \cite{I68}.  See also \cite{I82} Section $13$ for a determination of the singular extensions $\mathcal{A}_{n+1}$ of non-singular Toeplitz matrices $\mathcal{A}_n$ over $\mathbb{C}$.

\section{Nullity Sequences}\label{S: Nullity}

The main goal of this section is to determine the structure of any nullity sequence for a sequence of Toeplitz matrices over a finite field $F$, defined as in the previous section.  These results are assembled in Theorem \ref{T: null_string} below, and are obtained by making an analysis of the sequence $\{ \text{ker}(\mathcal{A}_n) \}$ of kernels of the matrices.  We shall show, for example, for a finite section, or string, $\nu_{\ell},\dots,\nu_{\ell +m}$ of the nullity sequence, taking values  $1,2,\dots,d-1,d,d,\dots,d,d-1,\dots,2,1,0$, the kernels of the matrices $A_{\ell +1},\dots,A_{\ell + m}$ can all be described explicitly in terms of the single vector generating ker$(\mathcal{A}_{\ell})$, cf. \cite{HR84} for related results on Toeplitz matrices over $\mathbb{C}$. We shall also be concerned with computing, starting with a fixed matrix $\mathcal{A}_{\ell}$ of nullity $\nu_{\ell} = 1$, the number of strings of embedded matrices $\mathcal{A}_{\ell}$ through $\mathcal{A}_{\ell + m}$ having a prescribed nullity string of the types listed in Theorem \ref{T: null_string}. This analysis begins in this section with Theorems \ref{T: null_one_vec} through \ref{T: equal equal down}, and their corollaries, and continues into the next section.

References \cite{HR84} and \cite{ER13} (and many other references) also contain detailed information about the kernels of Toeplitz matrices over $\mathbb{C}$ and over arbitrary fields, respectively.  The approach taken to describe the kernel of a Toeplitz matrix in these and other papers leads to some of the same results that we obtain from our approach.  In particular, the statement of our Theorem \ref{T: null_string}, is equivalent to that of Proposition 5.12 in \cite{HR84}, except that the latter addresses Toeplitz matrices with entries in $\mathbb{C}$.  On the other hand we believe that our approach is useful in obtaining our main results, which we believe to be new.  For example, our Theorem \ref{T: null_one_vec} begins the argument which leads not only to the structure theorem for kernels of strings of Toeplitz matrices, Theorem \ref{T: null_string}, but to our main results, contained in Remark \ref{R: counting_summary}, which give complete information on the numbers of strings of Toeplitz matrices which satisfy the patterns arising in Theorem \ref{T: null_string}.

In what follows we shall assume that $\vert F \vert = q$.

\begin{definition}\label{D: null_num}
$N(n,\nu)$ is the number of $(n+1)\times(n+1)$ Toeplitz matrices $\mathcal{A}_n$ as in (1.1) with nullity $\nu$.
\end{definition}

For each $n\in \mathbb{N}$ the matrix $\mathcal{A}_{n-1}$ occupies the top left $n\times n$ corner of $\mathcal{A}_{n}$, and also the lower right corner of $\mathcal{A}_{n}$.  From this we conclude that the sequence of ranks $\{\rho_n = \text{rank}(\mathcal{A}_n): n\in \mathbb{Z}^+\}$ is monotone increasing.  This and the Rank-Nullity Theorem give the following result.

\begin{proposition}\label{P: null} For each $n\in \mathbb{Z}^+$, $\rho_{n+1}$ is one of $\rho_n$, $1+ \rho_n,$ or $2+\rho_n$ so $\nu_{n+1}$ is $\nu_n+1$, $\nu_n$, or $\nu_{n}-1$.
\end{proposition}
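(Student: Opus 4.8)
The plan is to prove the two statements in tandem: first the claim about ranks, then deduce the claim about nullities via the Rank-Nullity Theorem (since $\nu_n = (n+1) - \rho_n$, so $\nu_{n+1} - \nu_n = 1 - (\rho_{n+1} - \rho_n)$, and the three cases $\rho_{n+1} - \rho_n \in \{0,1,2\}$ translate directly into $\nu_{n+1} - \nu_n \in \{1,0,-1\}$).

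For the rank statement, the key structural observation is already highlighted in the excerpt: the $(n+1)\times(n+1)$ matrix $\mathcal{A}_{n-1}$ sits in the top-left $n\times n$ corner of $\mathcal{A}_n$. Passing from $\mathcal{A}_n$ to $\mathcal{A}_{n+1}$ amounts to appending one new column on the right and one new row at the bottom. First I would argue that appending a single column to any matrix changes the rank by $0$ or $1$: the column space either already contains the new column or gains one dimension. This gives $\rho_n \le \operatorname{rank}(\mathcal{A}_n \text{ with new column}) \le \rho_n + 1$. Then I would argue that subsequently appending a single row likewise changes the rank by $0$ or $1$, by the dual argument on row spaces. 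Composing the two bounds yields $\rho_n \le \rho_{n+1} \le \rho_n + 2$. Monotonicity ($\rho_{n+1} \ge \rho_n$) also follows since $\mathcal{A}_n$ is a submatrix of $\mathcal{A}_{n+1}$, so $\operatorname{rank}(\mathcal{A}_{n+1}) \ge \operatorname{rank}(\mathcal{A}_n)$.

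The translation to nullities is then purely arithmetic: since $\mathcal{A}_n$ is $(n+1)\times(n+1)$, the Rank-Nullity Theorem gives $\nu_n = (n+1) - \rho_n$ and $\nu_{n+1} = (n+2) - \rho_{n+1}$, hence $\nu_{n+1} - \nu_n = 1 - (\rho_{n+1} - \rho_n)$. Running through $\rho_{n+1} - \rho_n \in \{0,1,2\}$ yields $\nu_{n+1} - \nu_n \in \{1,0,-1\}$ as claimed.

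I do not expect any genuine obstacle here; this is essentially a bookkeeping argument. The only point requiring a line of justification is the "append one column, then one row" decomposition, and in particular noting that this is legitimate: the intermediate $(n+1)\times(n+2)$ matrix (consisting of $\mathcal{A}_n$ augmented by the new rightmost column $a_{n+1}, a_n, \dots, a_1$ from $\mathcal{A}_{n+1}$) is a genuine submatrix of $\mathcal{A}_{n+1}$, so its rank is sandwiched correctly. One should also remark why the bound $2$ is the right one and cannot be improved in general — but that is not needed for the proposition as stated, only for motivating the later analysis.
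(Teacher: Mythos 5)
Your proposal is correct and follows essentially the same route as the paper, which dispatches this proposition in one line by noting that the embedding $\mathcal{A}_n \rightharpoonup \mathcal{A}_{n+1}$ forces $\rho_n \le \rho_{n+1} \le \rho_n + 2$ (one new row and one new column each add at most one to the rank) and then invoking the Rank--Nullity Theorem. Your column-then-row decomposition merely makes explicit what the paper leaves implicit; there is nothing to correct.
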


Let $a_0,a_1,\dots $ and $b_1,b_2,\dots$ be a pair of sequences in $F$ and let $\{\mathcal{A}_n:n\in \mathbb{Z}^+\}$ be the sequence of Toeplitz matrices defined as above.

\begin{definition}\label{D: null_seq}  Given a fixed sequence of Toeplitz matrices defined from a pair of sequences in $F$, the corresponding {\it nullity sequence}, $\{\nu_n: n\in \mathbb{Z}^+\}$ is given by $\nu_n = \text{null}(\mathcal{A}_n) = dim(\text{ker}(\mathcal{A}_n))$.
\end{definition}

We will prove the following result on the possible forms of the nullity sequence (see Remark \ref{R: null_string}).
\begin{theorem}\label{T: null_string} (cf. \cite{HR84}, Proposition 5.12)
Every nullity sequence  $\nu = \{ \nu_n:n\in \mathbb{Z}^+\}$ is given as follows.
\begin{enumerate}
\item $\nu = \{1,2,\dots,d-1,d,d,d\dots\}$, for some $d\in \mathbb{N}$
\item $\nu = \{0,1,2,\dots\}$
\item $\nu = \{1,2,3,\dots\}$
\item $\nu$ is the concatenation of finite length strings of the following form.
    \begin{enumerate}
         \item an arbitrary number of zeroes
\item $1,2,\dots,d-1,d,d-1,\dots,2,1,0 \,\, \text{for some positive integer}\,\, d \geq 1$
\item $1,2,\dots,d-1,d,d,\dots,d,d-1,\dots,2,1,0,$ where $d\geq 1$ may appear an arbitrary number of times
 \end{enumerate}
\item $\nu$ is the concatenation of a finite number of finite length strings of the form above, followed by (1) or (3)
\end{enumerate}
\end{theorem}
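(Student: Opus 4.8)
The plan is to leverage the constraint from Proposition \ref{P: null} — that $\nu_{n+1} - \nu_n \in \{-1,0,+1\}$ — together with structural facts about how the kernel of $\mathcal{A}_n$ sits inside the kernel of $\mathcal{A}_{n+1}$, which are exactly the content of Theorems \ref{T: null_one_vec} through \ref{T: equal equal down}. So the argument is essentially a bookkeeping argument on a walk in $\mathbb{Z}^+$ with steps $\pm 1$ or $0$, where the admissible transitions are dictated by the kernel analysis. First I would record the obvious initial condition: $\nu_0 \in \{0,1\}$ since $\mathcal{A}_0 = [a_0]$ is $1\times 1$. If $\nu_0 = 0$ and the sequence never leaves $0$ we get the empty concatenation in (4a); if it leaves $0$ it must step up to $1$ (by Proposition \ref{P: null}), landing us in the situation governed by the $\nu_\ell = 1$ analysis. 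So after an initial (possibly empty) block of zeroes, every subsequent excursion away from $0$ begins with a single $1$.

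The heart of the matter is to show that once $\nu_\ell = 1$, the sequence is forced into one of the listed shapes. Here I would invoke the kernel structure theorems: when $\nu_{\ell} = 1$ with generating kernel vector $v$, the theorems describe $\ker(\mathcal{A}_{\ell+1})$ explicitly and, crucially, constrain which of the three moves ($+1$, $0$, $-1$) are possible and how they chain. The key claims I would extract and then assemble are: (i) on the way up — during a maximal run of $+1$ steps — the nullity must increase by exactly $1$ each time, giving the prefix $1,2,\dots,d$; (ii) a "plateau" where $\nu$ stays equal to some value $d$ can only happen at the top of such a run (i.e. after reaching the local max $d$), and while on the plateau every step is $0$ until the first decrease; and most importantly (iii) \emph{once the sequence decreases by $1$, it is committed to decreasing by exactly $1$ at every subsequent step until it reaches $0$} — there is no "bounce back up" and no "pause" on the way down. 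Claim (iii) is what produces the symmetric descent $d-1, d-2, \dots, 1, 0$ and rules out all other local profiles. I would prove (iii) by contradiction using the kernel description: a decrease forces the kernel into a rigid form (spanned by shifts of a single polynomial/vector), and from that rigid form an increase or a plateau is incompatible with the Toeplitz structure of the next extension — this is precisely where Theorems \ref{T: null_one_vec}–\ref{T: equal equal down} and their corollaries do the work.

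With (i)–(iii) in hand the combinatorics is routine. An excursion that starts at $1$, goes up to a maximum $d$, possibly plateaus at $d$, and then comes back down, must return to $0$ (it cannot plateau or turn on the way down), giving exactly the string in (4c), with (4b) the special case of no plateau. If instead the ascent never terminates we get the infinite tail $1,2,3,\dots$, i.e. case (3); if the initial block of zeroes is itself infinite we get case (1) with $d=0$ trivially or, more precisely, case (2) arises when after a block of zeroes the sequence ascends forever, and case (1) when the "initial segment" is itself already $1,2,\dots$ and then plateaus forever at $d$. Concatenating finitely many complete excursions and then optionally appending one infinite ascent gives cases (4) and (5); the purely-periodic-plateau possibility gives (1). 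I would close by checking that every listed pattern is realized (so the classification is sharp) — but that realizability direction presumably follows from the counting constructions in Section 3, so here I would only need the "necessity" direction.

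The main obstacle is claim (iii): proving that the descent is forced to be strictly monotone with unit steps down to $0$, with no plateau and no reversal. Everything else is either immediate from Proposition \ref{P: null} or a short induction, but (iii) requires genuinely understanding the kernel of a Toeplitz matrix whose nullity has just dropped — one must show that such a kernel is "saturated downward," i.e. that its generating vector(s) cannot be extended to a kernel vector of the next larger Toeplitz matrix in a way that would keep the nullity constant, nor split to increase it. I expect this to be exactly Theorem \ref{T: equal equal down} (or its corollary), so the real content of the proof of Theorem \ref{T: null_string} is the careful deployment of those earlier kernel theorems rather than any new idea introduced at this stage.
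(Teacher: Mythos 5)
Your proposal matches the paper's own argument, which is exactly the assembly recorded in Remark \ref{R: null_string}: Proposition \ref{P: null} restricts each step to $\{-1,0,+1\}$, and the kernel-structure results (the corollary to Theorem \ref{T: ker_form}, Theorem \ref{T: null_up} and its corollary, Theorem \ref{T: null_down}, Theorem \ref{T: equal equal down}, and Theorem \ref{T: null_same_count}) supply precisely your claims (i)--(iii), with the forced unit descent to zero being Theorems \ref{T: null_down} and \ref{T: equal equal down}. The bookkeeping you describe is the same, so the proposal is correct and takes essentially the paper's route.
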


Let $n\in \mathbb{N}$ and suppose $\mathcal{A}_{n-1}$ and $\mathcal{A}_n$ are given matrices with $\mathcal{A}_{n-1}$ embedded in $\mathcal{A}_n$ (in both the top left and lower right corners) and satisfying $\nu_{n-1} =0$, i.e. $\mathcal{A}_{n-1}$ is invertible, and $\nu_n = 1$.  Then we will show that for any positive integer $d$ there is one and only one string of matrices beginning with $\mathcal{A}_{n}$, written $\mathcal{A}_n,\dots ,\mathcal{A}_{n+d}$ whose nullities $\nu_n,\dots,\nu_{n+d}$ are $1,2,\dots,d+1$, respectively, see the Corollary of Theorem \ref{T: ker_form}.  First it will be helpful to introduce the following notation and terminology.

\begin{itemize}
\item $F^n$ will denote the vector space of column vectors of length $n$ over $F$.
\item  The superscript $t$ denotes transpose so $\bold k= [k_0,\dots,k_n]^t$ is a column vector.
\item  If $\bold k= [k_0,\dots,k_n]^t$, then $\bold k'$ is the vector obtained from $\bold k$ by deleting its last entry.
\item  If $\bold k= [k_0,\dots,k_n]^t$, then $'\bold k$ is the vector obtained from $\bold k$ by deleting its initial entry.
\item  $\omega(\bold k) = [k_0,\dots,k_n,0]^t$, i.e., $\omega(\bold k)$ appends a $0$ to the end of vector $\bold k$.
\item  $\sigma(\bold k) = [0,k_0,\dots,k_n]^t$, i.e., $\sigma(\bold k)$ places a $0$ at the beginning of $\bold k$.
\item We will say that $\mathcal{A}_n$ is embedded in $\mathcal{A}_{n+1}$ if $\mathcal{A}_n$ is the top left corner (also lower right corner) of $\mathcal{A}_{n+1}$, and we will denote this by $\mathcal{A}_n \rightharpoonup \mathcal{A}_{n+1}$ or $\mathcal{A}_{n+1} \leftharpoonup \mathcal{A}_n$.
\end{itemize}

\begin{theorem}\label{T: null_one_vec}  Let $\mathcal{A}_{n-1}$ and $\mathcal{A}_n$ be Toeplitz matrices with $\mathcal{A}_{n-1}\rightharpoonup \mathcal{A}_n$, and nullity $\nu_{n-1} = 0, \nu_{n}=1$, respectively.  Then the column vector $\bold k = [k_0,\dots,k_n]^t$ spanning $\text{ker}(\mathcal{A}_n)$ satisfies $k_0 \neq 0$ and $k_n \neq 0$.
\end{theorem}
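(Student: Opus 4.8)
The plan is to argue by contradiction using the embedding structure. Suppose $k_n = 0$. Then $\bold k = \omega(\bold k')$ with $\bold k' \in F^n$. The key observation is that the top $n$ rows of $\mathcal{A}_n$, acting on $\bold k$, form exactly the matrix $[\mathcal{A}_{n-1} \mid \ast]$ where the last column is irrelevant because $k_n = 0$; more precisely, deleting the last column and last row of $\mathcal{A}_n$ yields $\mathcal{A}_{n-1}$ (the top-left embedding), so the first $n$ coordinates of $\mathcal{A}_n \bold k$ equal $\mathcal{A}_{n-1} \bold k'$. Since $\bold k \in \ker(\mathcal{A}_n)$, we get $\mathcal{A}_{n-1}\bold k' = 0$, and invertibility of $\mathcal{A}_{n-1}$ forces $\bold k' = 0$, hence $\bold k = 0$, contradicting that $\bold k$ spans a one-dimensional kernel. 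Therefore $k_n \neq 0$.

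For $k_0 \neq 0$ I would run the symmetric argument using the lower-right embedding. Suppose $k_0 = 0$, so $\bold k = \sigma({}'\bold k)$ with ${}'\bold k \in F^n$. Now delete the first row and first column of $\mathcal{A}_n$: by (1.1) this again yields $\mathcal{A}_{n-1}$ (the lower-right copy). The last $n$ coordinates of $\mathcal{A}_n \bold k$ depend only on columns $1$ through $n$ of $\mathcal{A}_n$ (column $0$ is killed by $k_0 = 0$) and on rows $1$ through $n$; this sub-block is precisely $\mathcal{A}_{n-1}$ acting on ${}'\bold k$. So $\mathcal{A}_{n-1}\, {}'\bold k = 0$, and invertibility gives ${}'\bold k = 0$, hence $\bold k = 0$, a contradiction. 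Thus $k_0 \neq 0$.

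The only slightly delicate point — and the one I'd write out carefully rather than assert — is the bookkeeping that identifies the relevant sub-block of $\mathcal{A}_n$ with $\mathcal{A}_{n-1}$ in each case. For $k_n = 0$: the entries $(\mathcal{A}_n)_{i,j}$ for $0 \le i,j \le n-1$ are exactly the entries of $\mathcal{A}_{n-1}$, since both are determined by the same truncated sequences $a_0,\dots,a_{n-1}$ and $b_1,\dots,b_{n-1}$; and $\sum_{j=0}^{n} (\mathcal{A}_n)_{i,j} k_j = \sum_{j=0}^{n-1} (\mathcal{A}_{n-1})_{i,j} k_j$ for $0 \le i \le n-1$ because the $j=n$ term vanishes. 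For $k_0 = 0$: the lower-right $n \times n$ block of $\mathcal{A}_n$, namely the entries $(\mathcal{A}_n)_{i,j}$ for $1 \le i,j \le n$, is a Toeplitz matrix built from the same diagonals and equals $\mathcal{A}_{n-1}$; and for $1 \le i \le n$ one has $\sum_{j=0}^{n}(\mathcal{A}_n)_{i,j}k_j = \sum_{j=1}^{n}(\mathcal{A}_n)_{i,j}k_j$, which is the $(i-1)$-st coordinate of $\mathcal{A}_{n-1}\,{}'\bold k$. Neither identification requires more than reading off (1.1), so this is really the whole proof; there is no serious obstacle, just the need to state the index shifts cleanly.
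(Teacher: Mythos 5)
Your proof is correct. For $k_0 \neq 0$ it coincides with the paper's argument: if $k_0 = 0$, the last $n$ coordinates of $\mathcal{A}_n\mathbf{k}=\mathbf{0}$ place the truncated vector in the kernel of the copy of $\mathcal{A}_{n-1}$ in the lower right corner, forcing $\mathbf{k}=\mathbf{0}$. For $k_n \neq 0$ you take a genuinely different and more direct route. The paper first uses $\nu_{n-1}=0$, $\nu_n=1$ and rank--nullity to get $\rho_n=\rho_{n-1}$, concludes that the last column of $\mathcal{A}_n$ lies in the span of the preceding columns, invokes the persymmetry of $\mathcal{A}_n$ to convert this into the statement that the first row lies in the span of the succeeding rows, and only then places $\mathbf{k}'$ in the kernel of the top-left copy of $\mathcal{A}_{n-1}$. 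You instead observe that when $k_n=0$ the first $n$ coordinates of $\mathcal{A}_n\mathbf{k}$ already compute $\mathcal{A}_{n-1}\mathbf{k}'$ via the top-left block, so no rank or symmetry considerations are needed, and the two halves of your proof are exact mirrors of each other; your index bookkeeping is correct. Your version is more elementary and in fact proves slightly more: any nonzero vector of $\ker(\mathcal{A}_n)$ has nonzero first and last entries as soon as $\mathcal{A}_{n-1}$ is invertible, with no use of the exact value $\nu_n=1$. What the paper's detour buys is the technique (rank stabilizes $\Rightarrow$ a bordering row lies in the span of the others $\Rightarrow$ $\omega$ or $\sigma$ of a kernel vector remains in the kernel) that is reused immediately in the proof of Theorem \ref{T: ker_form}; your argument does not set that up, but that is not a defect of the proof of this particular statement.
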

\begin{proof}
Let $\bold k$ be the vector spanning $\text{ker}(\mathcal{A}_n)$.  If $k_0 = 0$ then the vector $'\bold k$ is in the kernel of $\mathcal{A}_{n-1}$ in the lower right corner of $\mathcal{A}_n$, a contradiction unless $'\bold k = \bold 0$. Then $k_0 \neq 0$.  Next suppose that $k_n = 0$.  Since $\nu_n = 1$ we have $\rho_{n-1} = \rho_n$ so the last column of $\mathcal{A}_n$ is in the span of its preceding columns.  By symmetry of $\mathcal{A}_n$ this implies that the first row of $\mathcal{A}_n$ is in the span of the succeeding rows.  Therefore $\bold k'$ is in the kernel of the copy of $\mathcal{A}_{n-1}$ embedded in the top left of $\mathcal{A}_n$, a contradiction.  Therefore $k_n \neq 0$, also.
\end{proof}

\begin{theorem}\label{T: ker_form}  Let $\mathcal{A}_{n-1},\mathcal{A}_n,\dots,\mathcal{A}_{n+d}$ be matrices embedded in one another with nullities $\nu_{n-1},\nu_n,\dots,\nu_{n+d}$ equal to $0,1,2,\dots,d+1$, respectively.  Then for $1 \leq j \leq d-1, \text{ker}(\mathcal{A}_{n+j})$ coincides with span$(\omega(\text{ker}(\mathcal{A}_{n+j-1})),\sigma(\text{ker}(\mathcal{A}_{n+j-1}))$.
\end{theorem}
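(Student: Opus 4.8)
The plan is to prove two inclusions, using the two "shift" operations $\omega$ and $\sigma$ that, by the embedding structure of Toeplitz matrices, carry kernel vectors of $\mathcal{A}_{n+j-1}$ into kernel vectors of $\mathcal{A}_{n+j}$. The key observation is the mechanism already used in Theorem \ref{T: null_one_vec}: if $\mathbf{v}\in\text{ker}(\mathcal{A}_{m-1})$, then appending a zero at the end, i.e. $\omega(\mathbf{v})$, gives a vector annihilated by all but possibly the last row of $\mathcal{A}_m$ when we view $\mathcal{A}_{m-1}$ in the top-left corner; similarly $\sigma(\mathbf{v})$ is annihilated by all but possibly the first row of $\mathcal{A}_m$ via the bottom-right copy. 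So neither $\omega(\mathbf{v})$ nor $\sigma(\mathbf{v})$ is automatically in $\text{ker}(\mathcal{A}_m)$, but a suitable linear combination of the two images of a kernel vector might be, and more to the point, any kernel vector of $\mathcal{A}_{n+j}$ whose first or last coordinate vanishes immediately yields, by truncation ($\mathbf{k}\mapsto\mathbf{k}'$ or $\mathbf{k}\mapsto{}'\mathbf{k}$), a kernel vector of $\mathcal{A}_{n+j-1}$.

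First I would establish the easier inclusion $\supseteq$. Let $W_j := \text{span}(\omega(\text{ker}(\mathcal{A}_{n+j-1})),\sigma(\text{ker}(\mathcal{A}_{n+j-1})))$. I claim $W_j\subseteq\text{ker}(\mathcal{A}_{n+j})$ and $\dim W_j = \nu_{n+j-1}+1 = \nu_{n+j}$, which forces equality. For the dimension count: $\omega$ and $\sigma$ are each injective, and $\omega(\text{ker}(\mathcal{A}_{n+j-1}))\cap\sigma(\text{ker}(\mathcal{A}_{n+j-1}))$ consists of vectors with both first and last coordinate zero; using the hypothesis $\nu_{n+j-1}=j$ (so this is an interior step, $1\le j\le d-1$, and $\nu_{n+j-2}=j-1$), any such vector, stripped at both ends, lands in $\text{ker}(\mathcal{A}_{n+j-2})$, which has dimension $j-1$, and one checks this accounts exactly for a $(j-1)$-dimensional overlap, giving $\dim W_j = j + (j) - (j-1) = j+1$. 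Then I need $W_j\subseteq\text{ker}(\mathcal{A}_{n+j})$: for $\mathbf{v}\in\text{ker}(\mathcal{A}_{n+j-1})$ I must verify that the single "extra" equation (last row for $\omega(\mathbf{v})$, first row for $\sigma(\mathbf{v})$) is also satisfied; here I would invoke that the relevant rank equalities $\rho_{n+j-1}=\rho_{n+j}-1$ etc. force certain columns of $\mathcal{A}_{n+j}$ to lie in the span of others, exactly as in the proof of Theorem \ref{T: null_one_vec}, and translate that into the vanishing of the missing inner product. (This step may instead be cleaner inductively from the $\subseteq$ direction below together with the dimension count.)

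For the inclusion $\subseteq$: take any $\mathbf{k}=[k_0,\dots,k_{n+j}]^t\in\text{ker}(\mathcal{A}_{n+j})$. If $k_0=0$ then ${}'\mathbf{k}\in\text{ker}(\mathcal{A}_{n+j-1})$ (bottom-right copy), so $\mathbf{k}=\sigma({}'\mathbf{k})\in W_j$; symmetrically if $k_{n+j}=0$ then $\mathbf{k}=\omega(\mathbf{k}')\in W_j$. So it remains to treat vectors with $k_0\ne 0$ and $k_{n+j}\ne 0$. Since $\dim\text{ker}(\mathcal{A}_{n+j})=j+1\ge 2$ and within $\text{ker}(\mathcal{A}_{n+j})$ the linear conditions "$k_0=0$" and "$k_{n+j}=0$" each cut down the dimension by at most $1$, the subspace of kernel vectors with $k_0=0$ has dimension $\ge j\ge 1$ and likewise for $k_{n+j}=0$; hence I can subtract from an arbitrary $\mathbf{k}$ a suitable multiple of a kernel vector vanishing in the first coordinate to kill $k_0$, reducing to the already-handled case — provided such a vector exists, which the dimension bound guarantees since $j\ge 1$. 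This shows $\text{ker}(\mathcal{A}_{n+j})\subseteq W_j$, completing the proof.

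The main obstacle I anticipate is the $W_j\subseteq\text{ker}(\mathcal{A}_{n+j})$ direction — i.e., checking that the one "boundary" equation not automatically satisfied by $\omega(\mathbf{v})$ and $\sigma(\mathbf{v})$ does hold. This is where the precise nullity hypothesis $\nu_{n+j}=\nu_{n+j-1}+1$ (strictly increasing at every step) must be used: it is what prevents a vector like $\omega(\mathbf{v})$ from having to satisfy an extra independent constraint, via the symmetry-of-$\mathcal{A}$ argument that a last column dependent on earlier ones forces the first row dependent on later ones. I would either push that argument through directly for the extra row, or — likely cleaner — set up a joint induction on $j$ in which the structural description of $\text{ker}(\mathcal{A}_{n+j-1})$ from the previous step feeds the verification at step $j$, with the base case $j=1$ being where $\omega,\sigma$ applied to the single vector $\mathbf{k}$ spanning $\text{ker}(\mathcal{A}_n)$ (with $k_0,k_{n}\ne 0$, by Theorem \ref{T: null_one_vec}) produce two independent vectors in the $2$-dimensional $\text{ker}(\mathcal{A}_{n+1})$.
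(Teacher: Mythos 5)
Your overall strategy matches the paper's: the crux in both is that the hypothesis $\nu_{n+j}=\nu_{n+j-1}+1$ forces $\rho_{n+j}=\rho_{n+j-1}$, so the last row of $\mathcal{A}_{n+j}$ lies in the span of the preceding rows and (by the persymmetry of a Toeplitz matrix) the first row lies in the span of the succeeding rows; this yields $\omega(\ker(\mathcal{A}_{n+j-1}))\subseteq\ker(\mathcal{A}_{n+j})$ and $\sigma(\ker(\mathcal{A}_{n+j-1}))\subseteq\ker(\mathcal{A}_{n+j})$, and a dimension count finishes. One substantive correction: you wrote the rank relation as $\rho_{n+j-1}=\rho_{n+j}-1$, but rank--nullity with $\nu_{n+j}=\nu_{n+j-1}+1$ gives $\rho_{n+j-1}=\rho_{n+j}$, and it is precisely this \emph{equality} that lets you conclude the last row is dependent on the earlier ones; if the rank really went up by one the nullity would be unchanged and $\omega(\mathbf{v})$ could fail the final equation, so the argument would collapse. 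With that fixed, your inclusion--exclusion count $\dim W_j=j+j-(j-1)=j+1$ works, though it quietly uses the structure of $\ker(\mathcal{A}_{n+j-1})$ and $\ker(\mathcal{A}_{n+j-2})$ from earlier stages and so must be packaged into the induction; the paper runs the same induction more economically by exhibiting the explicit staircase basis $\omega^j(\mathbf{k}),\sigma(\omega^{j-1}(\mathbf{k})),\dots,\sigma^j(\mathbf{k})$, whose linear independence is immediate from $k_0\neq 0$. Finally, in your (redundant, given the dimension count) ``$\subseteq$'' paragraph the reduction is backwards: to kill the first coordinate of $\mathbf{k}$ you must subtract a multiple of a kernel vector with \emph{nonzero} first entry (e.g.\ $\omega^j(\mathbf{k})$, already known to lie in $W_j$), not one vanishing in the first coordinate.
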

\begin{proof}
As in the previous proof we have $\rho_{n+j-1} = \rho_{n+j}$ and so we conclude as we did in the previous proof that the last row (respectively, the first row) of $\mathcal{A}_{n+j}$ is in the span of the preceding (resp., the succeeding rows) of $\mathcal{A}_{n+j}$.  Since $\omega(\text{ker}(\mathcal{A}_{n+j-1}))$ is in the kernel of the matrix obtained by deleting the last row of $\mathcal{A}_{n+j}$, and since the last row of $\mathcal{A}_{n+j}$ is in the span of the preceding rows, we conclude that $\omega(\text{ker}(\mathcal{A}_{n+j-1}))\subset \text{ker}(\mathcal{A}_{n+j})$.  Similarly $\sigma(\text{ker}(\mathcal{A}_{n+j-1}))\subset \text{ker}(\mathcal{A}_{n+j})$.

From the results of the previous paragraph we see inductively that for $1\leq j \leq d$, $\text{ker}(\mathcal{A}_{n+j})$ is spanned by $[k_0,\dots,k_n,0,\dots,0]^t =
\omega^j(\bold k), [0,k_0,\cdots,k_n,0,\cdots,0]^t = \sigma(\omega^{j-1}(\bold k)),\dots, [0,\dots,0,k_0,\dots,k_n]^t = \sigma^j(\bold k)$.  The first $n+j$ (out of $n+j+1$) of these vectors is in $\omega(\text{ker}(\mathcal{A}_{n+j-1}))$ and the last $n+j$ vectors are in $\sigma(\mathcal{A}_{n+j-1})$.
\end{proof}
\begin{corollary}
Assume the same hypotheses as in the previous theorem.  Then for each $j$, $1\leq j \leq d$ there is one and only one choice of a pair $\{b_{n+j},a_{n+j}\}$ such that $\nu_{n+j} = null(\mathcal{A}_{n+j}) = 1 + \nu_{n+j-1} = 1 + null(\mathcal{A}_{n+j-1})$.
\end{corollary}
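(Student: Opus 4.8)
The plan is to use the explicit description of $\text{ker}(\mathcal{A}_{n+j})$ furnished by Theorem \ref{T: ker_form} together with the recursive structure of the Toeplitz matrices, and to think of everything in terms of the single generating vector $\bold k = [k_0,\dots,k_n]^t$ of $\text{ker}(\mathcal{A}_n)$. Fix $j$ with $1 \leq j \leq d$. Passing from $\mathcal{A}_{n+j-1}$ to $\mathcal{A}_{n+j}$ amounts to appending two new entries: a new super-diagonal entry $a_{n+j}$ (the last entry of the first row) and a new sub-diagonal entry $b_{n+j}$ (the first entry of the last row). I would first observe that the condition $\nu_{n+j} = 1 + \nu_{n+j-1}$ is equivalent, via the Rank–Nullity Theorem and monotonicity of rank (Proposition \ref{P: null}), to the statement that the last column of $\mathcal{A}_{n+j}$ lies in the span of its preceding columns \emph{and} that same relation holds for the rows; by the (persymmetric) symmetry of a Toeplitz matrix one of these implies the other, so there is really a single linear condition to satisfy.

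Next I would pin down what that condition says about $(b_{n+j},a_{n+j})$. By Theorem \ref{T: ker_form}, if the nullity does jump then $\text{ker}(\mathcal{A}_{n+j})$ is spanned by the $j+1$ vectors $\omega^j(\bold k), \sigma(\omega^{j-1}(\bold k)), \dots, \sigma^j(\bold k)$; in particular $\sigma^{j-1}(\omega(\bold k)) = [0,\dots,0,k_0,\dots,k_n,0]^t$ must lie in $\text{ker}(\mathcal{A}_{n+j})$. Writing out the bottom-most equation of $\mathcal{A}_{n+j}\, \sigma^{j-1}(\omega(\bold k)) = \bold 0$ — i.e. the equation coming from the last row of $\mathcal{A}_{n+j}$, which is the only row whose entries include the newly introduced $b_{n+j}$ — gives a relation of the form $b_{n+j}\cdot(\text{something}) + (\text{terms in previously fixed entries and the }k_i) = 0$. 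The coefficient of $b_{n+j}$ here is $k_0$ (up to the position of the shifts), and $k_0 \neq 0$ by Theorem \ref{T: null_one_vec}, so this equation determines $b_{n+j}$ uniquely. Symmetrically, using the vector $\omega^{j-1}(\sigma(\bold k)) = [0,k_0,\dots,k_n,0,\dots,0]^t$ and the first row of $\mathcal{A}_{n+j}$, whose only new entry is $a_{n+j}$, the coefficient of $a_{n+j}$ is $k_n \neq 0$, and this determines $a_{n+j}$ uniquely. This shows there is \emph{at most} one admissible pair $\{b_{n+j},a_{n+j}\}$.

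For existence I would argue that the pair so determined actually does produce $\nu_{n+j} = 1 + \nu_{n+j-1}$. The key point is that the displayed spanning set for the would-be kernel already contains $j-1$ vectors, namely $\sigma(\omega^{j-1}(\bold k)),\dots,\sigma^{j-1}(\omega(\bold k))$, which are automatically annihilated by $\mathcal{A}_{n+j}$ \emph{regardless} of $(b_{n+j},a_{n+j})$: each such vector has a $0$ in its first and last coordinates, so the corresponding product only involves the inner $(n+j)\times(n+j)$ block, which is a copy of $\mathcal{A}_{n+j-1}$, and these are exactly the shifts annihilated because $\text{ker}(\mathcal{A}_{n+j-1})$ contains all of $\omega^{j-1}(\bold k),\dots,\sigma^{j-1}(\bold k)$ (that is what nullity $j$ of $\mathcal{A}_{n+j-1}$ means, again by Theorem \ref{T: ker_form} applied one level down — or directly from its proof). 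Thus $\nu_{n+j} \geq j-1$ for free; the choice of $b_{n+j}$ forces $\sigma^{j-1}(\omega(\bold k))$ into the kernel as well, and the choice of $a_{n+j}$ forces $\omega^{j-1}(\sigma(\bold k))$ in, giving $j+1$ linearly independent kernel vectors (their independence is immediate from the staircase of leading zeros, using $k_0\neq 0$), hence $\nu_{n+j} \geq j+1 = 1+\nu_{n+j-1}$. Since Proposition \ref{P: null} forbids the nullity from rising by more than $1$, equality holds.

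The main obstacle I anticipate is bookkeeping: keeping the shift indices straight, verifying that the last row of $\mathcal{A}_{n+j}$ really is the unique row containing $b_{n+j}$ (and similarly for $a_{n+j}$ and the first row), and confirming that the coefficient of the new variable in the relevant scalar equation is precisely $k_0$ (resp.\ $k_n$) rather than some other $k_i$. Once the indexing is set up carefully this is a routine computation, and the nonvanishing of $k_0$ and $k_n$ from Theorem \ref{T: null_one_vec} is exactly what turns "one linear equation in one unknown" into "exactly one solution."
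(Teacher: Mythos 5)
There is a genuine error in the uniqueness step: you are reading off $b_{n+j}$ and $a_{n+j}$ from the wrong kernel vectors. The only entries of $\mathcal{A}_{n+j}$ that are new relative to $\mathcal{A}_{n+j-1}$ are $a_{n+j}$ in position $(1,n+j+1)$ and $b_{n+j}$ in position $(n+j+1,1)$; hence in the equation $\mathcal{A}_{n+j}\mathbf{v}=\mathbf{0}$ the coefficient of $b_{n+j}$ is the \emph{first} entry of $\mathbf{v}$ (via the last row) and the coefficient of $a_{n+j}$ is the \emph{last} entry of $\mathbf{v}$ (via the first row). The vectors you chose, $\sigma^{j-1}(\omega(\mathbf{k}))=[0,\dots,0,k_0,\dots,k_n,0]^t$ and $\omega^{j-1}(\sigma(\mathbf{k}))=[0,k_0,\dots,k_n,0,\dots,0]^t$, have first and last entries equal to $0$ whenever $j\geq 2$, so the resulting equations do not contain $b_{n+j}$ or $a_{n+j}$ at all and determine nothing. (Your argument is correct only for $j=1$, where $\sigma^{j-1}(\omega(\mathbf{k}))=\omega(\mathbf{k})$ does begin with $k_0$.) Indeed, your own existence paragraph correctly observes that exactly these interior shifts are annihilated \emph{regardless} of the choice of $\{b_{n+j},a_{n+j}\}$ --- which directly contradicts the claim that they pin the pair down. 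The same confusion corrupts the count at the end: for $j\geq 2$ the two vectors you claim are ``forced in'' by the choice of the new entries are already among the $j-1$ automatic ones (and for $j=2$ they are literally the same vector, since $\omega$ and $\sigma$ commute), so you do not reach $j+1$ independent kernel vectors.

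The fix is to use the two \emph{extremal} shifts, which is what the paper does: $\omega^j(\mathbf{k})=[k_0,\dots,k_n,0,\dots,0]^t$ has first entry $k_0\neq 0$ (Theorem \ref{T: null_one_vec}), so its membership in $\ker(\mathcal{A}_{n+j})$, guaranteed by Theorem \ref{T: ker_form}, forces one linear equation $b_{n+j}k_0+(\text{known})=0$ with a unique solution; symmetrically $\sigma^j(\mathbf{k})$ ends in $k_n\neq 0$ and determines $a_{n+j}$ from the first row. With that substitution your overall architecture (at most one pair by these two equations; at least $j-1$ automatic kernel vectors plus the two extremal ones forced in by the chosen pair, then Proposition \ref{P: null} to cap the nullity) is sound and in fact supplies the existence half more explicitly than the paper's proof, which only argues uniqueness. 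One smaller point: your opening reduction to ``a single linear condition'' is also off --- persymmetry converts ``last column in the span of the preceding columns'' into ``first row in the span of the succeeding rows,'' not into the last-row condition, so there really are two independent conditions, one for each of the two unknowns, consistent with the fact that two parameters get determined.
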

\begin{proof}
From Theorem \ref{T: null_one_vec} the vector $\bold k = [k_0,\dots,k_n]^t \in \text{ker}(\mathcal{A}_n)$ satisfies $k_0 \neq 0$ and $k_n \neq 0$.  The vector $\omega^j(\bold k)$ is in the kernel of $\mathcal{A}_{n+j}$ and so it has dot product $0$ with the last row of $\mathcal{A}_{n+j}$.  Since the first entry of this vector is $k_0$ we conclude that $b_{n+j}$ is uniquely determined by this fact.  Similarly $\sigma^j(\bold k)\in \text{ker}(\mathcal{A}_{n+j})$ and the vector ends in $k_n$, so $a_{n+j}$ is uniquely determined.  Thus there is only one choice for the pair $\{b_{n+j},a_{n+j}\}$.
\end{proof}
Our results imply that if $\nu_{n-1},\dots,\nu_{n+d}$ is $0,1,\dots,d+1$ then there is one and only one choice of the pair $\{b_{n+d+1},a_{n+d+1}\}$ such that $\nu_{n+d+1} = d+2$.  We now show that there are $(q-1)^2$ choices of $\{b_{n+d+1},a_{n+d+1}\}$ so that $\nu_{n+d+1} = d$.
We also determine how to obtain $\text{ker}(\mathcal{A}_{n+d+1})$ from $\text{ker}(\mathcal{A}_{n+d})$.

\begin{theorem}\label{T: null_up}
Suppose $n\in \mathbb{N}$ is such that $\mathcal{A}_{n-1},\mathcal{A}_n,\dots,\mathcal{A}_{n+d}$ is an embedded string of Toeplitz matrices with nullities $0,1,\dots,d+1$.  Then there are $(q-1)^2$ choices of $\{b_{n+d+1},a_{n+d+1}\}$ so that $\nu_{n+d+1} = d$.
\end{theorem}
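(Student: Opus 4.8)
The plan is to decide the value of $\nu_{n+d+1}$ by a rank count and then enumerate the admissible pairs. Since $\text{rank}(\mathcal{A}_{n+d}) = (n+d+1)-(d+1) = n$, the rank--nullity theorem shows that $\nu_{n+d+1}=d$ is equivalent to $\text{rank}(\mathcal{A}_{n+d+1}) = \text{rank}(\mathcal{A}_{n+d})+2$. Write
\[
\mathcal{A}_{n+d+1}=\begin{bmatrix}\mathcal{A}_{n+d}&\mathbf{c}\\ \mathbf{r}^{t}&a_{0}\end{bmatrix},\qquad \mathbf{c}=[a_{n+d+1},a_{n+d},\dots,a_{1}]^{t},\quad \mathbf{r}^{t}=[b_{n+d+1},b_{n+d},\dots,b_{1}].
\]
The first step is the elementary bordered-matrix fact that a border of one row and one column raises the rank of its core by two exactly when the new column lies outside the column space of the core and the new row lies outside its row space; this gives
\[
\nu_{n+d+1}=d\iff \mathbf{c}\notin\text{col}(\mathcal{A}_{n+d})\ \text{and}\ \mathbf{r}\notin\text{row}(\mathcal{A}_{n+d}).
\]
The forward implication follows at once from $\text{rank}(\mathcal{A}_{n+d+1})\le\text{rank}[\,\mathcal{A}_{n+d}\mid\mathbf{c}\,]+1\le\text{rank}(\mathcal{A}_{n+d})+2$ and its transposed counterpart; the converse holds because $\mathbf{c}\notin\text{col}(\mathcal{A}_{n+d})$ forces the first $n+d+1$ rows of $\mathcal{A}_{n+d+1}$, which constitute $[\,\mathcal{A}_{n+d}\mid\mathbf{c}\,]$, to have rank $n+1$, while $\mathbf{r}\notin\text{row}(\mathcal{A}_{n+d})$ prevents the last row of $\mathcal{A}_{n+d+1}$ from lying in their span, so the rank is $n+2$.

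Next I would convert the two non-containments into conditions on the scalars $a_{n+d+1}$ and $b_{n+d+1}$ alone. Let $\mathbf{k}=[k_{0},\dots,k_{n}]^{t}$ span $\text{ker}(\mathcal{A}_{n})$; then $k_{0}\neq 0\neq k_{n}$ by Theorem \ref{T: null_one_vec}, and by Theorem \ref{T: ker_form} the kernel of $\mathcal{A}_{n+d}$ has basis $\mathbf{w}_{0},\dots,\mathbf{w}_{d}\in F^{n+d+1}$, where $\mathbf{w}_{\ell}$ is $\mathbf{k}$ shifted to carry $\ell$ leading and $d-\ell$ trailing zeros. Every Toeplitz matrix is persymmetric, so $J\mathcal{A}_{n+d}J=\mathcal{A}_{n+d}^{\,t}$ for the reversal map $J$, whence the left null space of $\mathcal{A}_{n+d}$ is spanned by $J\mathbf{w}_{0},\dots,J\mathbf{w}_{d}$. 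Therefore $\mathbf{c}\in\text{col}(\mathcal{A}_{n+d})$ iff $(J\mathbf{w}_{\ell})^{t}\mathbf{c}=0$ for $\ell=0,\dots,d$, and $\mathbf{r}\in\text{row}(\mathcal{A}_{n+d})$ iff $\mathbf{r}^{t}\mathbf{w}_{\ell}=0$ for $\ell=0,\dots,d$; expanding these dot products rewrites them as $R(\mathbf{w}_{\ell})=0$ and $L(\mathbf{w}_{\ell})=0$, respectively, where $R(\mathbf{u})=\sum_{j}a_{j+1}u_{j}$ and $L(\mathbf{u})=\sum_{j}b_{n+d+1-j}u_{j}$.

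The crux is the pair of identities
\[
R(\mathbf{w}_{m})=\bigl(\mathcal{A}_{n+d}\,\mathbf{w}_{m+1}\bigr)_{0}\ \ (0\le m\le d-1),\qquad L(\mathbf{w}_{\ell})=\bigl(\mathcal{A}_{n+d}\,\mathbf{w}_{\ell-1}\bigr)_{n+d}\ \ (1\le\ell\le d),
\]
which hold because, term by term, $R(\mathbf{w}_{m})$ is the top coordinate and $L(\mathbf{w}_{\ell})$ the bottom coordinate of $\mathcal{A}_{n+d}$ applied to a shifted copy of $\mathbf{k}$. Since $\mathbf{w}_{0},\dots,\mathbf{w}_{d}$ all lie in $\text{ker}(\mathcal{A}_{n+d})$, the right-hand sides vanish, so $R(\mathbf{w}_{m})=0$ for $m\le d-1$ and $L(\mathbf{w}_{\ell})=0$ for $\ell\ge 1$ hold automatically; the only conditions that can fail are $R(\mathbf{w}_{d})=0$ and $L(\mathbf{w}_{0})=0$. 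A further expansion gives $R(\mathbf{w}_{d})=k_{n}\,a_{n+d+1}+\gamma_{1}$ and $L(\mathbf{w}_{0})=k_{0}\,b_{n+d+1}+\gamma_{2}$, with $\gamma_{1},\gamma_{2}$ fixed by $\mathcal{A}_{n+d}$ and $k_{n},k_{0}\neq 0$; hence $R(\mathbf{w}_{d})\neq 0$ for exactly $q-1$ choices of $a_{n+d+1}$ and $L(\mathbf{w}_{0})\neq 0$ for exactly $q-1$ choices of $b_{n+d+1}$. As these two conditions constrain disjoint unknowns, the equivalence of the first step yields $\nu_{n+d+1}=d$ for precisely $(q-1)^{2}$ pairs $\{b_{n+d+1},a_{n+d+1}\}$.

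I expect the bordered-matrix rank fact of the first step to be the main obstacle: it is elementary but needs a careful statement and a short proof. The rest is mostly index book-keeping---keeping track of which of $a_{n+d+1},b_{n+d+1}$ is the free variable in each condition, and the orientation of the reversal $J$---after which the two identities of the third step make the count immediate. (As a consistency check, this analysis also recovers the earlier assertion that exactly one pair gives $\nu_{n+d+1}=d+2$, namely the unique pair with $R(\mathbf{w}_{d})=0=L(\mathbf{w}_{0})$.)
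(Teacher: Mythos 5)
Your argument is correct, and it reaches the count by a genuinely different route from the paper's. The paper stays entirely inside the right kernel: it checks that the $d$ shifted vectors $\omega(\mathbf v_1),\dots,\omega(\mathbf v_d)$ lie in $\ker(\mathcal A_{n+d+1})$ for \emph{every} choice of the new entries, and then asserts that $\nu_{n+d+1}=d$ precisely when neither $[k_0,\dots,k_n,0,\dots,0]^t$ nor $[0,\dots,0,k_0,\dots,k_n]^t$ survives into the kernel, each exclusion being a single linear condition ($b_{n+d+1}k_0+\cdots\neq 0$, resp.\ $a_{n+d+1}k_n+\cdots\neq 0$) with $q-1$ solutions. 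You instead characterize $\nu_{n+d+1}=d$ by the bordered-rank criterion ($\mathbf c\notin\mathrm{col}(\mathcal A_{n+d})$ and $\mathbf r\notin\mathrm{row}(\mathcal A_{n+d})$), dualize the column condition through persymmetry to the left null space, and observe that all but two of the resulting orthogonality conditions vanish identically. The two surviving scalar conditions are literally the same two dot products the paper uses (the last row against $\omega(\mathbf v_0)$ and the first row against $\sigma(\mathbf v_d)$), so the enumeration is identical. What your route buys is a clean biconditional: the bordered-rank lemma shows that failing both membership conditions \emph{forces} the nullity down to exactly $d$, a sufficiency step the paper leaves implicit (as written it only makes explicit that nullity $d$ requires excluding both vectors, not that excluding both rules out nullity $d+1$). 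The cost is the extra apparatus of the left null space and the reversal $J$, which the paper sidesteps by noting that each candidate vector is already orthogonal to all but one row of $\mathcal A_{n+d+1}$.
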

\begin{proof}
Consider the matrix
\begin{equation*}
\mathcal{A}_{n+d+1} =\left[
\begin{matrix}
a_0 &  a_1 & a_2 & \dots &  a_{n+d}&a_{n+d+1}\\
b_1 & a_0 & a_1 & \dots &  a_{n+d-1}&a_{n+d}\\
b_2 & b_1 & a_0 & \dots &  a_{n+d-2} &a_{n-d-1}\\
\vdots & \vdots & \vdots & \ddots &  \vdots & \vdots \\
b_{n+d} & b_{n+d-1} & b_{n+d-2} & \dots & a_{0} &a_1\\
b_{n+d+1} & b_{n+d} & b_{n+d-1} & \dots & b_{1} & a_0
\end{matrix}
\right]
\end{equation*}
As usual we view the smaller matrix $\mathcal{A}_{n+d}$ positioned in both the top left and bottom right corner of $\mathcal{A}_{n+d+1}$.

From the proof of Theorem \ref{T: ker_form}, the kernel of  $\mathcal{A}_{n+d}$ has a basis $[k_0,\dots,k_n,0,\dots,0]^t$, $[0,k_0,\dots,k_n,0,\dots,0]^t,  \dots,[0,\dots,0,k_0,\dots,k_n]$ in $F^{n+d+1}$.  Label these vectors $\bold v_0,$
$\bold v_1,\dots,\bold v_{d}$.  Next append a $0$ to the end of each of these vectors, obtaining the vectors $\omega(\bold v_0),\omega(\bold v_1),\dots,
\omega(\bold v_d)$. Note that each of these vectors has dot product $0$ with the first $n+d+1$ rows of $\mathcal{A}_{n+d+1}$, since $\bold v_0$ through $\bold v_d$ form a basis for $\text{ker}(\mathcal{A}_{n+d})$.  Note, however, that the vectors $\omega(\bold v_1),\dots, \omega(\bold v_d)$ all have dot product $0$ with the last row of $\mathcal{A}_{n+d+1}$ because $'\omega(\bold v_1),\dots,'\omega(\bold v_d)$ coincide with the first $d$ basis vectors $\bold v_0,\dots,\bold v_{d-1}$ and so have dot product $0$ with the last row of $\mathcal{A}_{n+d}$.

So we have shown that there are at least $d$ linearly independent vectors in $\text{ker}(\mathcal{A}_{n+d+1})$ and these are obtained by deleting the first basis vector of $\mathcal{A}_{n+d}$ and appending a $0$ to each of the remaining basis vectors of $\mathcal{A}_{n+d}$.

Note that for $\mathcal{A}_{n+d+1}$ to have nullity $d$, neither the vector $[k_0,\dots,k_n,0,\dots,0]^t$ nor $[0,\dots,0,k_0,\dots,k_n]^t$ (in $F^{n+d+2}$) is in the kernel of $\mathcal{A}_{n+d+1}$ and there are $(q-1)^2$ choices of the pair $\{b_{n+d+1},a_{n+d+1}\}$ which make this so.
\end{proof}

\begin{corollary}
Suppose Toeplitz matrices $\mathcal{A}_{n-1}\rightharpoonup \cdots \rightharpoonup \mathcal{A}_{n+d}$ are given with nullities $0,1\dots,d+1$, respectively. Then there are $2q - 2$ choices of the pair of
entries $\{b_{n+d+1},a_{n+d+1}\}$ for which the corresponding matrix $\mathcal{A}_{n+d+1}$ has nullity $\nu_{n+d+1} = d+1$.
\end{corollary}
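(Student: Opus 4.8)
The plan is to combine a counting argument over all possible pairs $\{b_{n+d+1}, a_{n+d+1}\}$ with the two preceding results. There are $q^2$ pairs in total. By Proposition~\ref{P: null}, each such pair produces a matrix $\mathcal{A}_{n+d+1}$ with nullity $\nu_{n+d+1} \in \{d, d+1, d+2\}$, so these three nullity values partition the $q^2$ pairs into three disjoint classes. The strategy is to pin down the sizes of two of these classes exactly and obtain the third by subtraction.

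First I would invoke the remark immediately preceding Theorem~\ref{T: null_up}: since $\nu_{n-1},\dots,\nu_{n+d}$ is $0,1,\dots,d+1$, there is exactly one pair $\{b_{n+d+1}, a_{n+d+1}\}$ yielding $\nu_{n+d+1} = d+2$. Next I would invoke Theorem~\ref{T: null_up} itself, which tells us there are exactly $(q-1)^2$ pairs yielding $\nu_{n+d+1} = d$. Subtracting both from the total count of $q^2$ pairs, the number of pairs giving $\nu_{n+d+1} = d+1$ is
\[
q^2 - 1 - (q-1)^2 = q^2 - 1 - q^2 + 2q - 1 = 2q - 2,
\]
which is the claimed value.

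The main (minor) obstacle is ensuring the trichotomy is genuinely a partition — i.e., that every pair $\{b_{n+d+1}, a_{n+d+1}\}$ yields a well-defined matrix $\mathcal{A}_{n+d+1}$ with $\mathcal{A}_{n+d} \rightharpoonup \mathcal{A}_{n+d+1}$, and that the nullity is indeed confined to $\{d, d+1, d+2\}$. This is exactly the content of Proposition~\ref{P: null} applied with $\nu_{n+d} = d+1$, so no new work is needed; one only needs to note that $\{b_{n+d+1}, a_{n+d+1}\}$ ranges freely over $F \times F$ while the earlier entries $a_0,\dots,a_{n+d}, b_1,\dots,b_{n+d}$ stay fixed, so the $q^2$ extensions are genuinely distinct and exhaustive. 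Given that, the corollary follows immediately by arithmetic.
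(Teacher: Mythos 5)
Your proposal is correct and is essentially identical to the paper's own proof: both count the $q^2$ possible pairs, note that exactly one gives nullity $d+2$ (from the corollary to Theorem~\ref{T: ker_form}) and exactly $(q-1)^2$ give nullity $d$ (Theorem~\ref{T: null_up}), and obtain $2q-2$ by subtraction using the trichotomy from Proposition~\ref{P: null}.
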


\begin{proof}
We know that the ranks of the matrices $\mathcal{A}_n \rightharpoonup \mathcal{A}_{n+1}$ are non-decreasing so that $\rho_{n+d} \leq \rho_{n+d+1} \leq \rho_{n+d}+2$ so $\nu_{n+d+1}$ is $d+1,d-1$ or $d$.  We have seen that there is only one choice of pair $\{b_{n+d+1},a_{n+d+1}\}$ for which $\mathcal{A}_{n+d+1}$ has nullity $d+2$ and $(q-1)^2$ for which $\mathcal{A}_{n+d+1}$ has nullity $d$.  Hence there must be $2q-2$ pairs for which $\mathcal{A}_{n+d+1}$ has nullity $d+1$.
\end{proof}

We shall look more carefully below at $\text{ker}(\mathcal{A}_{n+d+1})$ when $\nu_{n+d+1} = \nu_{n+d}$, see Lemma \ref{L: null_same}.
\begin{theorem}\label{T: null_down}
Suppose Toeplitz matrices $\mathcal{A}_{n-1}$ through $\mathcal{A}_{n+d}$ have nullities $0$ through $d+1$ and $\nu_{n+d+1} = \text{null}(\mathcal{A}_{n+d+1}) = d$.  Then for any embedded Toeplitz matrices $\mathcal{A}_{n+d+2}$ through $\mathcal{A}_{n+2d+1}$ the nullities are $d-1$ through $0$.
\end{theorem}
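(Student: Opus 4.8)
The plan is to run essentially the mirror image of the argument used in Theorem~\ref{T: ker_form} and Theorem~\ref{T: null_up}, exploiting the fact that the nullity has just dropped from $d+1$ to $d$ at step $n+d+1$, and to push this decrease all the way down to $0$. By Proposition~\ref{P: null} the nullity can change by at most $1$ at each step, so once we know the nullity reaches $0$ at step $n+2d+1$, the only way to get from $d$ at step $n+d+1$ down to $0$ in exactly $d$ further steps is the strictly decreasing string $d-1,d-2,\dots,1,0$; hence it suffices to prove that (i) each successive nullity is forced to strictly decrease, i.e. $\nu_{n+d+1+i} = d-i$ for $1\le i\le d+1$ \emph{as the only possibility consistent with an embedded continuation}, and in fact that it is \emph{forced}. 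The subtlety is that Proposition~\ref{P: null} by itself only bounds the change; the real content is to rule out the nullity staying level or increasing.

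First I would pin down $\text{ker}(\mathcal{A}_{n+d+1})$ explicitly. From the proof of Theorem~\ref{T: null_up}, when $\nu_{n+d+1}=d$ the kernel is spanned by the $d$ vectors $\omega(\bold v_1),\dots,\omega(\bold v_d)$, where $\bold v_j = \sigma^{j}(\omega^{d-j}(\bold k))$ are the basis vectors of $\text{ker}(\mathcal{A}_{n+d})$; equivalently, $\text{ker}(\mathcal{A}_{n+d+1})$ is spanned by $[0,k_0,\dots,k_n,0,\dots,0]^t,\dots,[0,\dots,0,k_0,\dots,k_n,0]^t$ — all shifts of $\bold k$ whose first coordinate is $0$ and whose last coordinate is $0$. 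The key structural point is that $\bold k$ has $k_0\neq 0$ and $k_n\neq 0$ (Theorem~\ref{T: null_one_vec}), so a vector of the form $\sum c_j \sigma^{j}(\omega^{\ast}(\bold k))$ lies in $\text{ker}(\mathcal{A}_{n+d+1})$ only if its first and last entries vanish — that is, neither the "all the way left" shift $\omega^{d+1}(\bold k)$ nor the "all the way right" shift $\sigma^{d+1}(\bold k)$ is in the kernel.

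Now I would induct. Suppose $\mathcal{A}_{n+d+1}\rightharpoonup\cdots\rightharpoonup\mathcal{A}_{n+d+1+i}$ has nullities $d,d-1,\dots,d-i$ for some $0\le i\le d-1$, and suppose the kernel of $\mathcal{A}_{n+d+1+i}$ is spanned by the $d-i$ "interior" shifts of $\bold k$ (those shifts $[0,\dots,0,k_0,\dots,k_n,0,\dots,0]^t$ having at least $i+1$ leading zeros and at least one trailing zero — symmetrically placed). To get $\text{ker}(\mathcal{A}_{n+d+2+i})$, append a $0$ to each such vector and intersect with the condition coming from the new last row. Deleting the first basis vector and appending $0$ to the rest produces $d-i-1$ linearly independent kernel vectors of $\mathcal{A}_{n+d+2+i}$ exactly as in Theorem~\ref{T: null_up}; so $\nu_{n+d+2+i}\ge d-i-1$. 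Conversely, $\nu_{n+d+2+i}\le\nu_{n+d+1+i}=d-i$ by monotonicity of rank, and I must exclude $\nu_{n+d+2+i}=d-i$. If it were $d-i$, then (arguing as in Theorem~\ref{T: null_one_vec}, using that the rank did not increase at this step so the first and last rows of $\mathcal{A}_{n+d+2+i}$ are spanned by the others) both $\omega(\text{ker}(\mathcal{A}_{n+d+1+i}))$ \emph{and} $\sigma(\text{ker}(\mathcal{A}_{n+d+1+i}))$ would sit inside a $(d-i)$-dimensional space, forcing them to coincide; but $\omega$ of the leftmost interior shift and $\sigma$ of the rightmost interior shift are distinct shifts of $\bold k$ (their supports differ, as $k_0,k_n\neq 0$), a contradiction. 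Hence $\nu_{n+d+2+i}=d-i-1$, and the kernel is as claimed, completing the induction down to $\nu_{n+2d+1}=0$.

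The main obstacle is the bookkeeping in that last step: keeping precise track of which shifts of $\bold k$ span each successive kernel and verifying that the "$\omega$ of one basis, $\sigma$ of another" vectors really are forced to be linearly independent. This is where $k_0\neq 0,\ k_n\neq 0$ does the essential work — it guarantees that shifting $\bold k$ by different amounts yields linearly independent vectors, which is exactly what prevents the kernel dimension from failing to drop. Once that linear-independence-of-shifts observation is isolated as a sublemma, the rest is the same symmetric argument already used twice above, run in reverse.
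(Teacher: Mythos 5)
There is a genuine gap in the step that rules out the nullity failing to drop. First, your upper bound ``$\nu_{n+d+2+i}\le \nu_{n+d+1+i}$ by monotonicity of rank'' is false: monotonicity of rank only gives $\nu_{m+1}\le \nu_m+1$ (Proposition \ref{P: null}), so the case $\nu_{n+d+2+i}=d-i+1$ must also be excluded, and your write-up never addresses it. Second, your exclusion of $\nu_{n+d+2+i}=d-i$ rests on the claim that \emph{both} $\omega(\text{ker}(\mathcal{A}_{n+d+1+i}))$ and $\sigma(\text{ker}(\mathcal{A}_{n+d+1+i}))$ lie in the new kernel because ``the rank did not increase at this step.'' But if the nullity stays at $d-i$ the rank increases by exactly one, and then (as in the proof of Lemma \ref{L: null_same}) exactly one of the last-row/last-column dependencies holds, hence only one of the two containments is guaranteed; the ``forced to coincide'' contradiction is therefore unavailable. (In the remaining case $\nu_{n+d+2+i}=d-i+1$ both containments do hold, but there the ambient kernel is $(d-i+1)$-dimensional, so the two $(d-i)$-dimensional images are again not forced to coincide.)

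The repair is to derive a contradiction from a \emph{single} containment, which is what the paper does: every vector in $\text{ker}(\mathcal{A}_{n+d+1+i})$ begins (and ends) with $0$; choose $\bold v$ in that kernel whose first nonzero entry appears leftmost. If $\omega(\text{ker}(\mathcal{A}_{n+d+1+i}))\subset \text{ker}(\mathcal{A}_{n+d+2+i})$, then $\omega(\bold v)$ starts with $0$, so deleting its initial entry and viewing $\mathcal{A}_{n+d+1+i}$ in the lower right corner of $\mathcal{A}_{n+d+2+i}$ yields a kernel vector of $\mathcal{A}_{n+d+1+i}$ whose first nonzero entry is strictly further left --- a contradiction; the $\sigma$ case is symmetric. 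Your explicit bookkeeping (the kernel is spanned by interior shifts of $\bold k$ with at least $i+1$ leading zeros, and $k_0\neq 0$, $k_n\neq 0$ make distinct shifts independent) contains exactly the information needed to run this argument, so the gap is repairable within your framework, but as written the key exclusion step does not go through.
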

\begin{proof}
Since $\nu_{n+d} = d+1$ and $\nu_{n+d+1}=d$ we conclude from the proof of Theorem \ref{T: null_up} that the first and last entry of every vector in $\text{ker}(\mathcal{A}_{n+d+1})$ is $0$.  First suppose $\nu_{n+d+2}$ is not $d-1$ then by Proposition \ref{P: null}, $\nu_{n+d+2}$ is either $d+1$ or $d$, i.e. $\rho_{n+d+2}$ is either $\rho_{n+d+1}$ or
$\rho_{n+d+1}+1$.

Either case leads to the conclusion that $\omega(\text{ker}(\mathcal{A}_{n+d+1})) \subset \text{ker}(\mathcal{A}_{n+d+2})$ or $\sigma(\text{ker}(\mathcal{A}_{n+d+1})) \subset \text{ker}(\mathcal{A}_{n+d+2})$, or both.  Suppose $\omega(\text{ker}(\mathcal{A}_{n+d+1})) \subset \text{ker}(\mathcal{A}_{n+d+2})$.  Select a vector $\bold v \in \text{ker}(\mathcal{A}_{n+d+1})$ whose first nonzero entry appears leftmost among all vectors in $\text{ker}(\mathcal{A}_{n+d+1})$.  Then $\omega(\bold v)\in \text{ker}(\mathcal{A}_{n+d+2})$.  Therefore, viewing $\mathcal{A}_{n+d+1}$ in the lower right corner of $\mathcal{A}_{n+d+2}$, we conclude that $'\omega(\bold v)$ is in the kernel of $\mathcal{A}_{n+d+1}$, contradicting the leftmost position of the nonzero entry of $\bold v$.  Then by contradiction, $\omega(\text{ker}(\mathcal{A}_{n+d+1}))$ is not a subspace of $\text{ker}(\mathcal{A}_{n+d+2})$ and by a similar proof, neither is $\sigma(\text{ker}(\mathcal{A}_{n+d+1}))$.  Hence $\nu_{n+d+2}=d-1$.  Continuing this process inductively yields the result.
\end{proof}

The proof of the theorem leads immediately to the following conclusion.
\begin{corollary}
Assume the same hypotheses as in the previous theorem.  Then for each Toeplitz matrix $\mathcal{A}_{n+d+j},$ for $j=1,2,\dots,d$, and for any pair  $\{b_{n+d+j+1},a_{n+d+j+1}\}$ the corresponding matrix $\mathcal{A}_{n+d+j+1}\leftharpoonup \mathcal{A}_{n+d+j}$ has nullity one less than $\mathcal{A}_{n+d+j}$.
\end{corollary}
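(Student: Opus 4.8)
The plan is to read the corollary directly off Theorem~\ref{T: null_down}, which already quantifies over \emph{every} embedded continuation, so almost no extra work is needed. Fix $j$ with $1\le j\le d$, and suppose the entries defining $\mathcal{A}_{n+d+2},\dots,\mathcal{A}_{n+d+j}$ together with the pair $\{b_{n+d+j+1},a_{n+d+j+1}\}$ have been chosen arbitrarily, producing $\mathcal{A}_{n+d+j+1}$. Extend this in any way to a full embedded string $\mathcal{A}_{n+d+2},\dots,\mathcal{A}_{n+2d+1}$. By Theorem~\ref{T: null_down} the nullities $\nu_{n+d+1},\nu_{n+d+2},\dots,\nu_{n+2d+1}$ equal $d,d-1,\dots,0$; in particular $\nu_{n+d+j}=d-j+1$ and $\nu_{n+d+j+1}=d-j$, so $\nu_{n+d+j+1}=\nu_{n+d+j}-1$, which is exactly the assertion. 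The whole point is that the appended pair was unconstrained, and that is automatic because the hypotheses of Theorem~\ref{T: null_down} impose no condition on the continuation.

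One can equally see the claim falling out of the \emph{proof} of Theorem~\ref{T: null_down} rather than its statement. That proof carries, inductively, the fact that every vector in $\text{ker}(\mathcal{A}_{n+d+j})$ has $0$ in both its first and its last coordinate. Given this, for any pair $\{b_{n+d+j+1},a_{n+d+j+1}\}$ Proposition~\ref{P: null} leaves $\nu_{n+d+j+1}\in\{\nu_{n+d+j}-1,\nu_{n+d+j},\nu_{n+d+j}+1\}$, and the two larger values are ruled out exactly as in the theorem: either of them forces $\omega(\text{ker}(\mathcal{A}_{n+d+j}))$ or $\sigma(\text{ker}(\mathcal{A}_{n+d+j}))$ to lie in $\text{ker}(\mathcal{A}_{n+d+j+1})$, and then taking a kernel vector $\mathbf{v}$ whose first nonzero coordinate is as far left as possible (resp.\ whose last nonzero coordinate is as far right as possible) and passing to $'\omega(\mathbf{v})$ (resp.\ $(\sigma(\mathbf{v}))'$), which lies in $\text{ker}(\mathcal{A}_{n+d+j})$ when $\mathcal{A}_{n+d+j}$ is viewed in the lower-right (resp.\ upper-left) corner of $\mathcal{A}_{n+d+j+1}$, yields a kernel vector whose extremal nonzero coordinate is pushed strictly outward, a contradiction. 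None of this uses the values of $b_{n+d+j+1}$ or $a_{n+d+j+1}$, so the nullity must drop for every choice.

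I do not anticipate a real obstacle. The only point worth stating carefully is that the structural fact driving the contradiction --- that every kernel vector of $\mathcal{A}_{n+d+j}$ begins and ends with a zero --- persists along the chain regardless of which entries are appended, and this is precisely what is propagated by the induction in the proof of Theorem~\ref{T: null_down}. In the write-up I would give the short argument of the first paragraph, perhaps adding the remark that it follows step-by-step from the proof of the theorem as just indicated.
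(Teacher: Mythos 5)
Your argument is correct and is essentially the paper's: the paper gives no separate proof, stating only that ``the proof of the theorem leads immediately to the following conclusion,'' which is precisely what your second paragraph spells out (the kernel vectors of each $\mathcal{A}_{n+d+j}$ begin and end in $0$, so the step ruling out $\nu_{n+d+j+1}\geq\nu_{n+d+j}$ never uses the appended entries). Your first paragraph's observation --- that the corollary already follows from the \emph{statement} of Theorem~\ref{T: null_down} because it quantifies over all continuations --- is a clean shortcut and equally valid.
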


\begin{lemma}\label{L: null_same}
Let $d \geq 1$ and suppose for some $m\in \mathbb{N}$ that $\mathcal{A}_m\rightharpoonup \mathcal{A}_{m+1}$ satisfy $\nu_m = d = \nu_{m+1}$.  Then
 $\text{ker}(\mathcal{A}_{m+1}) = \omega(\text{ker}(\mathcal{A}_m))$ or $\text{ker}(\mathcal{A}_{m+1}) = \sigma(\text{ker}(\mathcal{A}_m))$.
 \end{lemma}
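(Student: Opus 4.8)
The plan is to exploit the rank-nullity bookkeeping together with the embedding of $\mathcal{A}_m$ in $\mathcal{A}_{m+1}$ in both corners, exactly as in the earlier proofs. Since $\nu_m = \nu_{m+1} = d$ we have $\rho_m = \rho_{m+1}$, so the last column of $\mathcal{A}_{m+1}$ lies in the span of the preceding columns, and by the symmetry of a Toeplitz matrix the first row of $\mathcal{A}_{m+1}$ lies in the span of the succeeding rows. As in the proof of Theorem \ref{T: ker_form}, the first of these two facts gives $\omega(\text{ker}(\mathcal{A}_m)) \subseteq \text{ker}(\mathcal{A}_{m+1})$ (a kernel vector of $\mathcal{A}_m$ in the top-left corner, padded by a $0$, is killed by the first $m+1$ rows of $\mathcal{A}_{m+1}$, and then the last row is dependent on those). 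Likewise the second fact gives $\sigma(\text{ker}(\mathcal{A}_m)) \subseteq \text{ker}(\mathcal{A}_{m+1})$. Since $\dim \omega(\text{ker}(\mathcal{A}_m)) = \dim \sigma(\text{ker}(\mathcal{A}_m)) = d = \dim \text{ker}(\mathcal{A}_{m+1})$, each of these two inclusions is automatically an equality of spaces \emph{as long as} the relevant inclusion genuinely holds; so the content of the lemma is that at least one of the two must hold, and that they cannot be \emph{strict} (which dimension count already rules out once we know they are inclusions). Thus it suffices to show we cannot simultaneously fail to have $\omega(\text{ker}(\mathcal{A}_m)) = \text{ker}(\mathcal{A}_{m+1})$ and $\sigma(\text{ker}(\mathcal{A}_m)) = \text{ker}(\mathcal{A}_{m+1})$.

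Here is where I expect the real work. The subtle point is that $\rho_m = \rho_{m+1}$ tells us the last column of $\mathcal{A}_{m+1}$ is a combination of the earlier columns, but it does \emph{not} a priori tell us the \emph{last row} is a combination of the earlier rows in the form needed to place $\omega(\text{ker})$ inside $\text{ker}(\mathcal{A}_{m+1})$; one must be careful that "first row dependent on later rows" and "last column dependent on earlier columns" each correctly feed one of the two inclusions. I would argue it cleanly as follows. Let $W = \text{ker}(\mathcal{A}_{m+1}) \subseteq F^{m+2}$, with $\dim W = d$. Deleting the last row of $\mathcal{A}_{m+1}$ yields the $(m+1)\times(m+2)$ matrix whose kernel is exactly $\omega(\text{ker}(\mathcal{A}_m))$ together with... no: more precisely, the kernel of "$\mathcal{A}_{m+1}$ with last row deleted" is $\{\,\mathbf{x} \in F^{m+2} : \mathbf{x}' \in \text{ker}(\mathcal{A}_m)\,\}$, which has dimension $d+1$ and contains $\omega(\text{ker}(\mathcal{A}_m))$. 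Now $W$ is the subspace of this $(d+1)$-dimensional space additionally annihilated by the last row; since $\dim W = d$, either $W$ equals a hyperplane in it, or $W$ equals all of it --- but the latter is impossible since $\dim W = d < d+1$. So $W$ is a genuine hyperplane in $\{\mathbf{x} : \mathbf{x}' \in \text{ker}(\mathcal{A}_m)\}$. Symmetrically, $W$ is a hyperplane in $\{\mathbf{x} : {}'\mathbf{x} \in \text{ker}(\mathcal{A}_m)\}$, and $\sigma(\text{ker}(\mathcal{A}_m))$ sits inside this latter space.

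To finish, I would show directly that $\omega(\text{ker}(\mathcal{A}_m)) \subseteq W$ or $\sigma(\text{ker}(\mathcal{A}_m)) \subseteq W$. Take any $\mathbf{v} \in \text{ker}(\mathcal{A}_m) \setminus \{0\}$ and consider $\omega(\mathbf{v})$: it lies in $\{\mathbf{x} : \mathbf{x}' \in \text{ker}(\mathcal{A}_m)\}$, so the only obstruction to $\omega(\mathbf{v}) \in W$ is the dot product of $\omega(\mathbf{v})$ with the last row $[\,b_{m+1}, b_m, \dots, b_1, a_0\,]$ of $\mathcal{A}_{m+1}$, which equals $b_{m+1} v_0$ (the $0$ in the last slot kills $a_0$, and the remaining entries reproduce the dot product of $\mathbf{v}$ with the last row of $\mathcal{A}_m$, which is $0$ since $\mathbf{v} \in \text{ker}(\mathcal{A}_m)$) --- so $\omega(\mathbf{v}) \in W$ iff $b_{m+1} v_0 = 0$. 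Similarly $\sigma(\mathbf{v}) \in W$ iff $a_{m+1} v_n = 0$, where $\mathbf{v} = [v_0,\dots,v_n]^t$ and here $n = m$. So: if $b_{m+1} = 0$ then $\omega(\text{ker}(\mathcal{A}_m)) \subseteq W$, hence (by the dimension count) $W = \omega(\text{ker}(\mathcal{A}_m))$; if $a_{m+1} = 0$ then similarly $W = \sigma(\text{ker}(\mathcal{A}_m))$. The remaining case is $b_{m+1} \neq 0$ and $a_{m+1} \neq 0$: then $\omega(\mathbf{v}) \in W$ forces $v_0 = 0$ for every $\mathbf{v} \in \text{ker}(\mathcal{A}_m)$ whose image $\omega(\mathbf{v})$ lands in $W$, and since $\omega$ restricted to the $d$-dimensional space $\text{ker}(\mathcal{A}_m)$ meets the hyperplane $W$ in a subspace of dimension $\geq d-1$, we would get that $v_0 = 0$ on a $(d-1)$-dimensional (or larger) subspace of $\text{ker}(\mathcal{A}_m)$; combined with the symmetric statement for $\sigma$ and $v_n$, and with the fact (from the proof of Theorem \ref{T: null_up}, or an argument in its spirit) constraining where the kernel vectors can have their leading and trailing nonzero entries, one derives a contradiction with $\dim\text{ker}(\mathcal{A}_m) = d \geq 1$. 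I expect this last case-analysis --- pinning down that $b_{m+1}$ and $a_{m+1}$ cannot both be nonzero when the nullity stays at $d$ --- to be the main obstacle, and I would handle it by choosing $\mathbf{v}\in\text{ker}(\mathcal{A}_m)$ with leftmost possible leading nonzero entry (as in the proof of Theorem \ref{T: null_down}) to force $v_0 \neq 0$, contradicting $\omega(\mathbf{v}) \in W$, and symmetrically on the right.
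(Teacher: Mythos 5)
Your argument does not go through, and the trouble starts in the first sentence: from $\nu_m=\nu_{m+1}=d$ you conclude $\rho_m=\rho_{m+1}$, but the two matrices have different sizes, so $\nu_m=(m+1)-\rho_m$ and $\nu_{m+1}=(m+2)-\rho_{m+1}$ force $\rho_{m+1}=\rho_m+1$. With the correct relation it is \emph{not} automatic that the last column of $\mathcal{A}_{m+1}$ is dependent on the preceding columns, and your opening claim that \emph{both} $\omega(\text{ker}(\mathcal{A}_m))$ and $\sigma(\text{ker}(\mathcal{A}_m))$ sit inside $\text{ker}(\mathcal{A}_{m+1})$ is false (by your own dimension count it would force these two subspaces to coincide, which they generally do not). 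The whole content of the lemma is a dichotomy, and the paper extracts it directly from $\rho_{m+1}=\rho_m+1$: passing from $\mathcal{A}_m$ to $\mathcal{A}_{m+1}$ adds one row and one column, and exactly one of these additions raises the rank. If the last row is dependent on the preceding rows one gets $\omega(\text{ker}(\mathcal{A}_m))\subseteq\text{ker}(\mathcal{A}_{m+1})$; if instead the last column is dependent on the preceding columns, persymmetry converts this to ``first row dependent on the succeeding rows,'' and viewing $\mathcal{A}_m$ in the bottom-right corner gives $\sigma(\text{ker}(\mathcal{A}_m))\subseteq\text{ker}(\mathcal{A}_{m+1})$; a dimension count then upgrades the surviving inclusion to equality.

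The later steps also contain genuine errors, which a single example over $GF(2)$ exposes. Take $\mathcal{A}_2$ the all-ones $3\times3$ matrix (nullity $2$) and extend with $a_3=1$, $b_3=0$; then $\nu_3=2$ and $\text{ker}(\mathcal{A}_3)=\sigma(\text{ker}(\mathcal{A}_2))$. First, $\text{ker}(\mathcal{A}_{m+1})$ need not lie in $\{\mathbf{x}:\mathbf{x}'\in\text{ker}(\mathcal{A}_m)\}$: here $[0,1,0,1]^t\in\text{ker}(\mathcal{A}_3)$ but $[0,1,0]^t\notin\text{ker}(\mathcal{A}_2)$, so your ``$W$ is a hyperplane in a $(d+1)$-dimensional space'' framework collapses (also, the kernel of $\mathcal{A}_{m+1}$ with its last row deleted is not that set, since the deleted-row matrix still has a last column). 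Second, the dot product of $\omega(\mathbf{v})$ with the last row $[b_{m+1},b_m,\dots,b_1,a_0]$ is $b_{m+1}v_0+\sum_{i=1}^{m}b_{m+1-i}v_i$, not $b_{m+1}v_0$: the trailing sum pairs $[b_m,\dots,b_1]$ with the \emph{shifted} vector $[v_1,\dots,v_m]$ and is not the vanishing quantity $(\text{last row of }\mathcal{A}_m)\cdot\mathbf{v}$. In the example, $\mathbf{v}=[1,1,0]^t\in\text{ker}(\mathcal{A}_2)$ gives $\omega(\mathbf{v})\cdot[0,1,1,1]=1\neq 0=b_3v_0$, so $b_{m+1}=0$ does not imply the $\omega$-inclusion as you assert. (Compare the Corollary to Theorem~\ref{T: ker_form}, which only uses that this dot product is affine in $b$ with coefficient $v_0$.) Since your entire endgame is a trichotomy on whether $b_{m+1}$ and $a_{m+1}$ vanish, and the one case you identify as ``the real work'' is in any event left as an acknowledged sketch, the proof is not salvageable in its current form.
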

 \begin{proof}
 Since $\nu_m = \nu_{m+1}$ it follows that $\rho_{m+1} = \rho_m +1$ so one and only one of the following occurs:  either the last row of $A_{m+1}$ is in the span of the preceding rows, or the last column is in the span of the preceding columns.  In the former case it is clear that $\omega(\text{ker}(\mathcal{A}_m))\subset \text{ker}(\mathcal{A}_{m+1})$.  In the latter case, it follows from the symmetry of $\mathcal{A}_{m+1}$ that the first row is in the span of the succeeding rows and therefore $\sigma(\text{ker}(\mathcal{A}_m)) \subset \text{ker}(\mathcal{A}_{m+1})$.  A dimension argument finishes the proof.
  \end{proof}
\begin{theorem}\label{T: null_same}
Suppose for some $m\in \mathbb{N}$ that $\mathcal{A}_m\rightharpoonup \mathcal{A}_{m+1}\rightharpoonup \mathcal{A}_{m+2}$ satisfy $0 < \nu_m = \nu_{m+1}=\nu_{m+2}$.  Then either $\text{ker}(\mathcal{A}_{m+1}) = \omega(\text{ker}(\mathcal{A}_m))$ and $\text{ker}(\mathcal{A}_{m+2})=\omega^2(\text{ker}(\mathcal{A}_m))$ or
 $\text{ker}(\mathcal{A}_{m+1}) = \sigma(\text{ker}(\mathcal{A}_m))$ and $\text{ker}(\mathcal{A}_{m+2}) = \sigma^2(\mathcal{A}_m)$.
 \end{theorem}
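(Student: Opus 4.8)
The plan is to bootstrap from Lemma \ref{L: null_same}, which already tells us that each single step from $\mathcal{A}_m$ to $\mathcal{A}_{m+1}$ (and from $\mathcal{A}_{m+1}$ to $\mathcal{A}_{m+2}$) is implemented either by an $\omega$ or by a $\sigma$ on the kernel. So there are a priori four cases: $(\omega,\omega)$, $(\sigma,\sigma)$, $(\omega,\sigma)$, and $(\sigma,\omega)$. The theorem asserts the two mixed cases cannot happen, so the real content is ruling those out. First I would fix notation: write $K_m = \text{ker}(\mathcal{A}_m)$, let $d = \dim K_m \geq 1$, and recall from the proof of Lemma \ref{L: null_same} that $\nu_m = \nu_{m+1}$ forces $\rho_{m+1} = \rho_m + 1$, i.e. exactly one of ``last row in span of preceding rows'' or ``last column in span of preceding columns'' holds at each step — and by the symmetry of the Toeplitz matrix these are the two alternatives ``$\omega$'' and ``$\sigma$'' respectively.

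The key step is the mixed case, say $K_{m+1} = \omega(K_m)$ but $K_{m+2} = \sigma(K_{m+1}) = \sigma(\omega(K_m))$. I would extract a contradiction by looking at the extreme nonzero coordinates. Since $K_{m+1} = \omega(K_m)$, every vector in $K_{m+1}$ has last entry $0$; and since $\mathcal{A}_m$ sits in the bottom-right corner of $\mathcal{A}_{m+1}$ and $K_{m+1}$ is obtained by appending a zero, there is some vector in $K_{m+1}$ whose \emph{first} entry is nonzero (otherwise every vector in $K_{m+1}$ would, after deleting its leading zero, lie in $\text{ker}(\mathcal{A}_m)$ embedded in the bottom-right corner, forcing $\nu_{m+1} > \nu_m$; more carefully, $\omega$ of a basis of $K_m$ is a basis of $K_{m+1}$, so the leftmost nonzero coordinate occurring among vectors of $K_{m+1}$ is the same as for $K_m$, in particular position $0$ need not be hit — I'd instead track the set of coordinate positions in which kernel vectors can be nonzero). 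The cleaner invariant: under $\omega$ the ``leftmost possible nonzero position'' of the kernel is unchanged, while under $\sigma$ it increases by one, and dually the ``rightmost possible nonzero position'' increases by one under $\omega$ and is unchanged under $\sigma$. Now apply the argument from the proof of Theorem \ref{T: null_down}: if $K_{m+2} = \sigma(K_{m+1})$, pick $\mathbf v \in K_{m+1}$ whose first nonzero entry is leftmost among all vectors of $K_{m+1}$; then $\sigma(\mathbf v) \in K_{m+2}$, and viewing $\mathcal{A}_{m+1}$ in the top-left corner of $\mathcal{A}_{m+2}$ gives $\sigma(\mathbf v)' \in \text{ker}(\mathcal{A}_{m+1})$ — but $\sigma(\mathbf v)'$ has its leading entry equal to $0$ (it came from $\sigma$) while still being a nonzero vector in $K_{m+1}$ whose leftmost nonzero entry is strictly to the right of $\mathbf v$'s. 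I'd need to reconcile this with the fact that $K_{m+1} = \omega(K_m)$ already pins down exactly which positions can be nonzero, namely $\{0,1,\dots,n+m\}$ — actually the contradiction is most transparent if one notes $\omega$ does not move the ``leftmost reachable'' index while the hypothesized $\sigma$ step would require it to move; combining the two forces $\nu$ to drop, contradicting $\nu_{m+2} = d$. The symmetric argument (with rows and columns interchanged, using the bottom-right copy) rules out $(\sigma,\omega)$.

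Once the two mixed cases are eliminated, the conclusion is immediate: in the $(\omega,\omega)$ case $K_{m+1} = \omega(K_m)$ and $K_{m+2} = \omega(K_{m+1}) = \omega^2(K_m)$, and similarly $K_{m+2} = \sigma^2(K_m)$ in the $(\sigma,\sigma)$ case, which is exactly the statement (with the harmless typo that $\sigma^2(\mathcal{A}_m)$ should read $\sigma^2(\text{ker}(\mathcal{A}_m))$). The main obstacle I anticipate is making the ``leftmost/rightmost reachable nonzero coordinate'' bookkeeping precise enough that the contradiction in the mixed case is airtight — in particular being careful that $K_{m+1} = \omega(K_m)$ genuinely constrains \emph{all} of $K_{m+1}$ (not just one basis), so that a subsequent $\sigma$ really would shrink the dimension; this is where I'd lean hardest on the embedding-in-a-corner trick already used in Theorems \ref{T: null_down} and \ref{T: null_one_vec}, applied now at two consecutive levels.
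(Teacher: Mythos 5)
Your reduction to four cases via Lemma \ref{L: null_same} is exactly how the paper begins, and your instinct to kill the mixed cases by tracking extreme nonzero coordinates and re-embedding kernel vectors one level down is the right family of ideas. But the contradiction in the $(\omega,\sigma)$ case is not actually reached as written, and the place where it slips is precisely the spot you flag. From $\text{ker}(\mathcal{A}_{m+2})=\sigma(\text{ker}(\mathcal{A}_{m+1}))$ and the fact that every vector of $\text{ker}(\mathcal{A}_{m+1})=\omega(\text{ker}(\mathcal{A}_m))$ ends in $0$, the top-left embedding does give you $\sigma(\mathbf v)'\in\text{ker}(\mathcal{A}_{m+1})$, a vector whose first nonzero entry sits one position to the \emph{right} of $\mathbf v$'s. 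Since you chose $\mathbf v$ to minimize the position of the first nonzero entry, producing a kernel vector whose first nonzero entry is further right violates nothing: a minimum is only contradicted by something smaller. (The analogous step in the proof of Theorem \ref{T: null_down} works because there the deletion happens at the front, shifting the first nonzero entry to the \emph{left} of the minimum.) Your fallback invariant is also stated backwards: the rightmost nonzero position occurring among kernel vectors is unchanged by $\omega$ (appending a $0$ moves nothing) and increases by one under $\sigma$; and ``combining the two forces $\nu$ to drop'' is asserted rather than derived.

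The gap is local and fixable in either of two ways. First, use the extremal coordinate at the correct end: the re-embedding argument, applied to all of $\text{ker}(\mathcal{A}_{m+2})=\sigma(\omega(\text{ker}(\mathcal{A}_m)))=\omega(\sigma(\text{ker}(\mathcal{A}_m)))$, yields $\sigma(\text{ker}(\mathcal{A}_m))\subseteq\text{ker}(\mathcal{A}_{m+1})=\omega(\text{ker}(\mathcal{A}_m))$; now take $\mathbf k\in\text{ker}(\mathcal{A}_m)$ whose \emph{last} nonzero entry is rightmost, say at position $p$. Then $\sigma(\mathbf k)$ has a nonzero entry at position $p+1$, while every vector of $\omega(\text{ker}(\mathcal{A}_m))$ vanishes beyond position $p$; contradiction. (The $(\sigma,\omega)$ case is the mirror image, and is where the leftmost coordinate is the right one to use.) Second, alternatively, iterate: the map $\mathbf v\mapsto\sigma(\mathbf v)'$ is an injective linear self-map of $\text{ker}(\mathcal{A}_{m+1})$ that strictly increases the position of the first nonzero entry, which is impossible on a nonzero finite-dimensional space. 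For comparison, the paper's own proof is the first route in compressed form: it commutes $\sigma$ and $\omega$ to write $\text{ker}(\mathcal{A}_{m+2})=\omega(\sigma(\text{ker}(\mathcal{A}_m)))$, peels off the $\omega$ via the top-left corner to get $\sigma(\text{ker}(\mathcal{A}_m))\subseteq\text{ker}(\mathcal{A}_{m+1})$, and declares this incompatible with $\text{ker}(\mathcal{A}_{m+1})=\omega(\text{ker}(\mathcal{A}_m))$ --- the incompatibility being exactly the rightmost-coordinate comparison above. Your reading of $\sigma^2(\mathcal{A}_m)$ as a typo for $\sigma^2(\text{ker}(\mathcal{A}_m))$ is correct.
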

 \begin{proof}
 Suppose $\text{ker}(\mathcal{A}_{m+1}) = \omega(\text{ker}(\mathcal{A}_m))$ but $\text{ker}(\mathcal{A}_{m+2}) = \sigma(\text{ker}(\mathcal{A}_{m+1})) = \sigma(\omega(\mathcal{A}_m)))$.  Then $\text{ker}(\mathcal{A}_{m+2})= \omega(\sigma(\text{ker}(\mathcal{A}_m)))$ and from this we have $\sigma(\text{ker}(\mathcal{A}_m)) \subset \text{ker}(\mathcal{A}_{m+1})$, a contradiction.  Similarly the assumption $\text{ker}(\mathcal{A}_{m+1}) = \sigma(\text{ker}(\mathcal{A}_m))$ and $\text{ker}(\mathcal{A}_{m+2})=\omega(\text{ker}(\mathcal{A}_{m+1}))$ leads to a contradiction, so the conclusion of the theorem follows.
\end{proof}
A proof similar to the above leads to the following result.

\begin{theorem}\label{T: null_same_same}
Suppose for some $m\in \mathbb{N}$ that $\mathcal{A}_m\rightharpoonup \mathcal{A}_{m+1}\rightharpoonup \mathcal{A}_{m+2}\rightharpoonup \cdots \rightharpoonup \mathcal{A}_{m+r}$ satisfy $0 < \nu_m = \nu_{m+1}= \cdots = \nu_{m+r}$.  If $\text{ker}(\mathcal{A}_{m+1}) = \omega(\text{ker}(\mathcal{A}_m))$ then $\text{ker}(\mathcal{A}_{m+r}) = \omega^r(\text{ker}(\mathcal{A}_m))$, and
if $\text{ker}(\mathcal{A}_{m+1}) = \sigma(\text{ker}(\mathcal{A}_m))$ then $\text{ker}(\mathcal{A}_{m+r}) = \sigma^r(\mathcal{A}_m)$.
 \end{theorem}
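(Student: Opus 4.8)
The plan is to reduce everything to Theorem~\ref{T: null_same} and to propagate its conclusion one step at a time along the chain; I will carry out the $\omega$-case, the $\sigma$-case being identical after the obvious left--right reflection. The one structural fact worth isolating first is that the two insertion operators commute: $\omega(\sigma(\bold k)) = \sigma(\omega(\bold k))$ for every column vector $\bold k$ (both sides merely prepend and append a zero), so that $\omega^i\sigma^j = \sigma^j\omega^i$ as operators on subspaces. This commutation is precisely what makes a ``switch'' from an $\omega$-step to a $\sigma$-step impossible; it is already the engine of Theorem~\ref{T: null_same}, so here it will be used only through that theorem.

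The core of the argument is the claim that, under the hypotheses, $\text{ker}(\mathcal{A}_{m+j}) = \omega(\text{ker}(\mathcal{A}_{m+j-1}))$ for every $j$ with $1 \le j \le r$, which I would establish by induction on $j$. For $j=1$ this is exactly the hypothesis $\text{ker}(\mathcal{A}_{m+1}) = \omega(\text{ker}(\mathcal{A}_m))$. Assume it for $j-1$, where $2 \le j \le r$. Since the whole chain has common nullity $d = \nu_m = \cdots = \nu_{m+r} > 0$, the triple $\mathcal{A}_{m+j-2} \rightharpoonup \mathcal{A}_{m+j-1} \rightharpoonup \mathcal{A}_{m+j}$ satisfies the hypotheses of Theorem~\ref{T: null_same}; and because the inductive hypothesis gives $\text{ker}(\mathcal{A}_{m+j-1}) = \omega(\text{ker}(\mathcal{A}_{m+j-2}))$, that theorem forces its $\omega$-alternative (the two alternatives being mutually exclusive by Lemma~\ref{L: null_same}), namely $\text{ker}(\mathcal{A}_{m+j}) = \omega^2(\text{ker}(\mathcal{A}_{m+j-2})) = \omega(\text{ker}(\mathcal{A}_{m+j-1}))$. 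This completes the induction, and composing the $r$ equalities telescopes to $\text{ker}(\mathcal{A}_{m+r}) = \omega^r(\text{ker}(\mathcal{A}_m))$, as required. One can phrase the same thing as a minimal-counterexample argument: by Lemma~\ref{L: null_same} each step along the chain is either an $\omega$-step or a $\sigma$-step, and if $j$ were the first $\sigma$-step then Theorem~\ref{T: null_same} applied to $\mathcal{A}_{m+j-2}\rightharpoonup\mathcal{A}_{m+j-1}\rightharpoonup\mathcal{A}_{m+j}$ would contradict the choice of $j$.

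I do not anticipate a genuine obstacle: all of the real work sits in Theorem~\ref{T: null_same}, and what remains is bookkeeping. The only points needing a little care are that every consecutive triple along the chain inherits the hypothesis $0 < \nu_{m+j-2} = \nu_{m+j-1} = \nu_{m+j}$ (immediate from the hypothesis on the full chain $\mathcal{A}_m$ through $\mathcal{A}_{m+r}$), that the base case $j=1$ must be kept separate because Theorem~\ref{T: null_same} requires an honest triple of matrices, and that in the $\sigma$-case one invokes the second alternative of Theorem~\ref{T: null_same} at every step. For $r \le 1$ the statement is immediate and for $r=2$ it is Theorem~\ref{T: null_same} verbatim, so the induction is really only needed for $r \ge 3$.
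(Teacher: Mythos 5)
Your proof is correct and is essentially the argument the paper intends: for this theorem the paper writes only that ``a proof similar to the above'' (i.e., to Theorem~\ref{T: null_same}) suffices, and your induction --- applying Theorem~\ref{T: null_same} to each consecutive triple along the chain and using the exclusivity of its two alternatives to telescope the $\omega$-steps --- is exactly that similar proof written out. The mutual exclusivity you invoke is indeed available, both from the ``one and only one'' dichotomy established in the proof of Lemma~\ref{L: null_same} and from the elementary fact that $\omega(W)\neq\sigma(W)$ for any nonzero subspace $W$, which is the same fact the paper's proof of Theorem~\ref{T: null_same} implicitly relies on.
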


\begin{theorem}\label{T: equal equal down}
If $\nu_m = \nu_{m+1} = a > 0$ then $\nu_{m+2}$ is either $a$ or $a-1$.  In the latter case the subsequent nullities are $a-2,\dots,1,0$.
\end{theorem}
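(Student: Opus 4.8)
The plan is to prove the two assertions in turn.

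First, to rule out $\nu_{m+2}=a+1$: by Proposition \ref{P: null} we already have $\nu_{m+2}\in\{a-1,a,a+1\}$, so suppose for contradiction that $\nu_{m+2}=a+1$. By Lemma \ref{L: null_same} applied to $\mathcal{A}_m\rightharpoonup\mathcal{A}_{m+1}$ we have $\ker(\mathcal{A}_{m+1})=\omega(\ker(\mathcal{A}_m))$ or $\ker(\mathcal{A}_{m+1})=\sigma(\ker(\mathcal{A}_m))$; assume the first, the second being symmetric. Since $\nu_{m+2}=a+1$ means $\rho_{m+2}=\rho_{m+1}$, the argument in the proof of Theorem \ref{T: ker_form} shows that the first row of $\mathcal{A}_{m+2}$ lies in the span of its succeeding rows, and hence $\sigma(\ker(\mathcal{A}_{m+1}))\subseteq\ker(\mathcal{A}_{m+2})$. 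Because $\sigma$ and $\omega$ commute, this reads $\omega(\sigma(\ker(\mathcal{A}_m)))\subseteq\ker(\mathcal{A}_{m+2})$; now, viewing $\mathcal{A}_{m+1}$ in the top left corner of $\mathcal{A}_{m+2}$, a vector of the form $\omega(\mathbf{u})$ can lie in $\ker(\mathcal{A}_{m+2})$ only if $\mathbf{u}\in\ker(\mathcal{A}_{m+1})$, so $\sigma(\ker(\mathcal{A}_m))\subseteq\ker(\mathcal{A}_{m+1})=\omega(\ker(\mathcal{A}_m))$; as both spaces have dimension $a$, equality holds, i.e. $\sigma(\ker(\mathcal{A}_m))=\omega(\ker(\mathcal{A}_m))$.

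The key observation is that $\sigma(V)=\omega(V)$ is impossible for a nonzero subspace $V\subseteq F^{m+1}$. Identify $F^{m+1}$ and $F^{m+2}$ with the spaces of polynomials over $F$ of degree at most $m$ and at most $m+1$, respectively; under this identification $\omega$ becomes the inclusion and $\sigma$ becomes multiplication by $x$. If $\widetilde{V}$ denotes the polynomial subspace corresponding to $\omega(V)$, then $\sigma(V)=\omega(V)$ says $x\widetilde{V}=\widetilde{V}$, which forces $\widetilde{V}=\{0\}$: a nonzero polynomial of least degree in $\widetilde{V}$ would have to equal $x$ times a polynomial of smaller degree in $\widetilde{V}$. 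But $a=\nu_m>0$, a contradiction. Hence $\nu_{m+2}\neq a+1$, and by Proposition \ref{P: null} it equals $a$ or $a-1$.

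Now suppose $\nu_{m+2}=a-1$; if $a=1$ then $\nu_{m+2}=0$ and there is nothing more to prove, so assume $a\ge 2$. I would first record the general fact: whenever $\rho_{n+1}=\rho_n+2$, every vector in $\ker(\mathcal{A}_{n+1})$ has first and last coordinate equal to $0$. This is proved as in the final paragraph of the proof of Theorem \ref{T: null_up}, or directly by noting that $\rho_{n+1}=\rho_n+2$ forces the first column of $\mathcal{A}_{n+1}$ with its top entry removed to lie outside the column space of $\mathcal{A}_n$, and likewise for the last column with its bottom entry removed, so that the equations $\mathcal{A}_{n+1}\mathbf{v}=0$, restricted respectively to the bottom $n+1$ rows and to the top $n+1$ rows, force the first and the last coordinate of $\mathbf{v}$ to vanish. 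Since $\nu_{m+2}=\nu_{m+1}-1$, this applies to $\ker(\mathcal{A}_{m+2})$, and then the descending argument of Theorem \ref{T: null_down} goes through essentially verbatim: if $\nu_{m+3}$ were not $a-2$, then by Proposition \ref{P: null} and Lemma \ref{L: null_same} at least one of $\omega(\ker(\mathcal{A}_{m+2}))\subseteq\ker(\mathcal{A}_{m+3})$ or $\sigma(\ker(\mathcal{A}_{m+2}))\subseteq\ker(\mathcal{A}_{m+3})$ must hold; in the first case, choosing $\mathbf{v}\in\ker(\mathcal{A}_{m+2})$ whose leading nonzero entry is as far left as possible and using that $\omega(\mathbf{v})$ begins with $0$, we find $'\omega(\mathbf{v})\in\ker(\mathcal{A}_{m+2})$ with a leading nonzero entry strictly further left, a contradiction; the second case is symmetric, using the rightmost trailing nonzero entry. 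So $\nu_{m+3}=a-2$; as $\rho_{m+3}=\rho_{m+2}+2$, the end-vanishing property holds again for $\ker(\mathcal{A}_{m+3})$, and iterating yields $\nu_{m+2+j}=a-1-j$ for $j=0,1,\dots,a-1$, i.e. the subsequent nullities are $a-2,a-3,\dots,1,0$.

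The main obstacle is the first assertion: one must see that the hypothetical jump $\nu_{m+2}=a+1$ forces precisely the identity $\sigma(\ker(\mathcal{A}_m))=\omega(\ker(\mathcal{A}_m))$, and then recognize — most transparently through the polynomial reformulation — that this identity is incompatible with $\ker(\mathcal{A}_m)\neq\{0\}$. Once that is in place, the descending assertion is a routine rerun of the proof of Theorem \ref{T: null_down}, the only genuinely new ingredient being that a rank jump of $2$ always pushes the new kernel off the two corners; minor care is also needed for the degenerate value $a=1$ and for reducing both the $\omega$ and the $\sigma$ sub-cases to one by symmetry.
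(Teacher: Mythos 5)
Your proof is correct, but the route you take to the crucial first assertion --- ruling out the pattern $a,a,a+1$ --- is genuinely different from the paper's. The paper assumes the offending triple is the \emph{first} occurrence of this pattern in the sequence, traces the nullity string back to its start, and uses Theorems \ref{T: null_one_vec}, \ref{T: ker_form} and \ref{T: null_same_same} to write $\text{ker}(\mathcal{A}_{m+1})$ explicitly in terms of shifts of a single vector $\bold k$ with nonzero first and last entries; the contradiction is then extracted by manufacturing a vector in $\text{ker}(\mathcal{A}_{m+1})$ that ends in $1$, which is incompatible with $\text{ker}(\mathcal{A}_{m+1})=\omega(\text{ker}(\mathcal{A}_m))$. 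You instead argue purely locally: Lemma \ref{L: null_same} at the step $m\to m+1$, the two inclusions forced by $\rho_{m+2}=\rho_{m+1}$, and the observation that a kernel vector of $\mathcal{A}_{m+2}$ ending in $0$ truncates to a kernel vector of $\mathcal{A}_{m+1}$ together yield $\sigma(\text{ker}(\mathcal{A}_m))=\omega(\text{ker}(\mathcal{A}_m))$, which your polynomial (shift-invariance) argument shows is impossible for a nonzero subspace. This buys you independence from the global structure of the nullity string --- no induction on ``first occurrence'' and no appeal to the explicit kernel basis --- whereas the paper's heavier machinery produces that explicit basis as a by-product and reuses it in the subsequent counting results. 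Your second half (the descent $a-2,\dots,1,0$) parallels the paper's proof of Theorem \ref{T: null_down}; the one new ingredient, that a rank jump of $2$ forces every kernel vector of the larger matrix to vanish in its first and last coordinates, you derive in general directly from the rank hypothesis, where the paper (in Theorems \ref{T: null_up} and \ref{T: equal equal down}) again reads it off the explicit basis. Both derivations are sound, and your handling of the degenerate case $a=1$ and of the $\omega$/$\sigma$ symmetry is fine.
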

\begin{proof}
By Proposition \ref{P: null} the only other possibility is that $\nu_{m+2}=a+1$, so suppose this is the case.  We may assume this is the first time in the nullity sequence that this phenomenon has occurred, i.e. that there is an $a > 0$ and a triple $\nu_m, \nu_{m+1},\nu_{m+2}$ with nullities $a,a,a+1$.  From the preceding results we may assume that $\nu_m, \nu_{m+1},\nu_{m+2}$ is part of a nullity string $\nu_{j-1},\nu_j,\dots,\nu_{m+2}$ of the form $0,1,2,\dots,a-1,a,a,\dots,a,a+1$ or, if $j-1 = 0$, the string might be of the form $1,2,\dots,a-1,a,a,\dots,a,a+1$.  Since the argument for the latter is similar to the former we treat only the former case.  Then by Theorem \ref{T: null_one_vec} there is a single nonzero vector $\bold{k}\in F^{j+1}$ such that $\bold{k}$ generates $\text{ker}(\mathcal{A}_j)$ and $\bold{k}$ has nonzero initial $k_0$ and final entry $k_j$. If $\ell = j+a-1$, then by Theorem \ref{T: ker_form} $\text{ker}(\mathcal{A}_{\ell})$ is generated by the following vectors in $F^{j+a}$: $[\bold{k},0,\dots,0],[0,\bold{k},0,\dots,0],\dots,[0,\dots,0,\bold{k}]$. Then by the previous theorem  (with $r = m-\ell$) either $\text{ker}(\mathcal{A}_m) = \omega^r(\text{ker}(\mathcal{A}_{\ell}))$ and $\text{ker}(\mathcal{A}_{m+1})=\omega^{r+1}(\text{ker}(\mathcal{A}_{\ell}))$ or $\text{ker}(\mathcal{A}_m) = \sigma^r(\text{ker}(\mathcal{A}_{\ell}))$ and $\text{ker}(\mathcal{A}_{m+1})=\sigma^{r+1}(\text{ker}(\mathcal{A}_{\ell}))$.  We assume the former case, the latter case being similar.  From the form of $\bold{k}$ it follows that there is a vector $\bold{v}\in \text{ker}(\mathcal{A}_{m+1})$ beginning with $1$ and ending in $0$.  Since $\nu_{m+2} = a+1$ it follows that $\rho_{m+2} = \rho_{m+1}$. Thus the last row of $\mathcal{A}_{m+2}$ is in the span of the preceding rows, so $\omega(\bold{v})\in \text{ker}(\mathcal{A}_{m+2})$.  Since the last column of $\mathcal{A}_{m+2}$ is in the span of the preceding columns there is a vector $\bold{w}\in \text{ker}(\mathcal{A}_{m+2})$ which ends in $1$.  Then either $\bold{w}$ or, for some $\alpha\in F$, $\omega(\bold{v})+\alpha\bold{w}$ is a vector that begins with a $0$ and ends in $1$.  By deleting the initial $0$ from this vector and viewing $\mathcal{A}_{m+1}$ as being in the lower right corner of $\mathcal{A}_{m+2}$, we obtain a vector in $\text{ker}(\mathcal{A}_{m+1})$ that ends in a $1$, contradicting our assumption that $\text{ker}(A_{m+1}) = \omega(\text{ker}(A_m))$.  The argument for the case $\text{ker}(\mathcal{A}_{m+1}) = \sigma(\text{ker}(\mathcal{A}_m))$ is similar.

So we have shown that the triple $\nu_m, \nu_{m+1},\nu_{m+2}$ is either $a,a,a$ or $a,a,a-1$  Assuming the latter (and that $a-1 >0$) we will show that $\nu_{m+3} = a-2$.  As above, $\text{ker}(\mathcal{A}_{m+1})$ is either $\omega^{r+1}(\text{ker}(\mathcal{A}_{\ell}))$ or $\sigma^{r+1}(\text{ker}(\mathcal{A}_{\ell}))$.  We assume the former case (with the proof of the latter case being similar).  Then $\text{ker}(\mathcal{A}_{m+1})$ is generated by the vectors $\omega^{r+1}([\bold{k},0,\dots,0]),\omega^{r+1}([0,\bold{k},0,\dots,0]),\dots,\omega^{r+1}([0,\dots,0,\bold{k}])$.  It can be verified that the image under $\omega$ of the second through last of these vectors is in ker$(\mathcal{A}_{m+2})$ and therefore $\omega^{r+2}([\bold{k},0,\dots,0])$ is not in
$\text{ker}(\mathcal{A}_{m+2})$.  It follows that every vector in $\text{ker}(\mathcal{A}_{m+2})$ begins and ends with a $0$.  Suppose then that $\nu_{m+3} \geq a-1$.  Then by an
argument similar to the proof of Lemma \ref{L: null_same} it follows that either $\omega(\text{ker}(\mathcal{A}_{m+2}))\subset \text{ker}(\mathcal{A}_{m+3})$, or
$\sigma(\text{ker}(\mathcal{A}_{m+2}))\subset \text{ker}(\mathcal{A}_{m+3})$, and then by an argument similar to that in the last paragraph of Theorem \ref{T: null_down} it follows that
$\nu_{m+3} = a-2$.  Continuing in this way we verify that the nullities from $\nu_{m+1}$ on are $a,a-1,a-2,\dots,1,0$.
\end{proof}

\begin{theorem}\label{T: null_same_count}
Suppose for some $m\in \mathbb{N}$ that $\mathcal{A}_m\rightharpoonup \mathcal{A}_{m+1}$ and  $0 < a = \nu_m = \nu_{m+1}$.  Then $\nu_{m+2}$ is either $a$ or $a-1$.
There are $q$ matrices $\mathcal{A}_{m+2}$ for which $\mathcal{A}_{m+1}\rightharpoonup \mathcal{A}_{m+2}$ and $\nu_{m+2} = a$, and $q^2-q$ for which $\nu_{m+2} = a-1$.
\end{theorem}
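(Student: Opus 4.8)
The plan is to count how many of the $q^2$ choices of the new pair $\{b_{m+2},a_{m+2}\}$ producing $\mathcal{A}_{m+1}\rightharpoonup\mathcal{A}_{m+2}$ yield $\nu_{m+2}=a$; by Theorem~\ref{T: equal equal down} every choice gives $\nu_{m+2}\in\{a,a-1\}$, so the number with $\nu_{m+2}=a-1$ is $q^2$ minus the first count, and it suffices to show the first count is $q$. By Lemma~\ref{L: null_same}, since $\nu_m=\nu_{m+1}=a$ we have $\text{ker}(\mathcal{A}_{m+1})=\omega(\text{ker}(\mathcal{A}_m))$ or $\text{ker}(\mathcal{A}_{m+1})=\sigma(\text{ker}(\mathcal{A}_m))$. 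I would carry out the argument in the first case, the second being its mirror image: interchange $\omega\leftrightarrow\sigma$, first coordinate $\leftrightarrow$ last coordinate, $b_{m+2}\leftrightarrow a_{m+2}$, and (in the notation below) $k_0\leftrightarrow k_r$.

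First I would reduce ``$\nu_{m+2}=a$'' to a clean containment. Since Theorem~\ref{T: equal equal down} forbids $\nu_{m+2}=a+1$, we always have $\dim\text{ker}(\mathcal{A}_{m+2})\le a$, so $\nu_{m+2}=a$ is equivalent to $\omega(\text{ker}(\mathcal{A}_{m+1}))\subseteq\text{ker}(\mathcal{A}_{m+2})$: the reverse implication is a dimension count, and the forward implication follows from Lemma~\ref{L: null_same} applied to $\mathcal{A}_{m+1}\rightharpoonup\mathcal{A}_{m+2}$ together with Theorem~\ref{T: null_same} applied to $\mathcal{A}_m\rightharpoonup\mathcal{A}_{m+1}\rightharpoonup\mathcal{A}_{m+2}$ — one checks that for $a>0$ the spaces $\omega(\text{ker}(\mathcal{A}_m))$ and $\sigma(\text{ker}(\mathcal{A}_m))$ are distinct, so being in the ``$\omega$'' case forces $\text{ker}(\mathcal{A}_{m+2})=\omega^2(\text{ker}(\mathcal{A}_m))=\omega(\text{ker}(\mathcal{A}_{m+1}))$. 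For any $\mathbf{w}\in\text{ker}(\mathcal{A}_{m+1})$ the vector $\omega(\mathbf{w})$ is automatically orthogonal to the first $m+2$ rows of $\mathcal{A}_{m+2}$, so the containment amounts to: $\omega(\mathbf{w})$ is orthogonal to the last row $[b_{m+2},b_{m+1},\dots,b_1,a_0]$ of $\mathcal{A}_{m+2}$ for every $\mathbf{w}$ in a spanning set of $\text{ker}(\mathcal{A}_{m+1})$.

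Next I would make $\text{ker}(\mathcal{A}_{m+1})$ explicit. Walking back through the current block of constant nullity $a$ — using Theorem~\ref{T: null_same_same} to see we stay in the ``$\omega$'' branch throughout the block, and then Theorem~\ref{T: ker_form} together with Theorem~\ref{T: null_one_vec} at the foot of the strictly increasing run $0,1,\dots,a$ (or $1,2,\dots,a$) preceding it — yields a single vector $\mathbf{k}=[k_0,\dots,k_r]^t$ with $k_0\neq0$ (and $k_r\neq0$) such that $\text{ker}(\mathcal{A}_{m+1})$ is spanned by the ``staircase'' vectors $\mathbf{w}_0,\dots,\mathbf{w}_{a-1}$, where $\mathbf{w}_j$ is $\mathbf{k}$ slid so that it occupies coordinates $j+1,\dots,j+r+1$, with zeros elsewhere. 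Feeding these into the orthogonality condition: for $j\ge1$ the dot product of $\omega(\mathbf{w}_j)$ with the last row of $\mathcal{A}_{m+2}$ equals, after reindexing, the dot product of $\omega(\mathbf{w}_{j-1})$ with the last row of $\mathcal{A}_{m+1}$, which vanishes because $\omega(\mathbf{w}_{j-1})\in\omega(\text{ker}(\mathcal{A}_m))=\text{ker}(\mathcal{A}_{m+1})$; so those $a-1$ conditions hold automatically and involve none of the new entries. For $j=0$ the condition reads $k_0b_{m+2}+(k_1b_{m+1}+\dots+k_rb_{m+2-r})=0$, which since $k_0\neq0$ determines $b_{m+2}$ uniquely and places no restriction on $a_{m+2}$. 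Hence $\nu_{m+2}=a$ holds for exactly one value of $b_{m+2}$ and all $q$ values of $a_{m+2}$, i.e.\ for exactly $q$ matrices $\mathcal{A}_{m+2}$; the remaining $q^2-q$ give $\nu_{m+2}=a-1$.

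The main obstacle is the ``walking back'' step that produces the staircase description of $\text{ker}(\mathcal{A}_{m+1})$: one must verify that a block of constant nullity $a>0$ is always immediately preceded by nullities $a-1,a-2,\dots$ (so that no $a,a-1,a$ or $a+1,a,a$ pattern can intervene — this follows from Proposition~\ref{P: null} and the structure already established), then chain Theorem~\ref{T: null_same_same} down the block and apply Theorems~\ref{T: ker_form} and \ref{T: null_one_vec} at the foot of the increasing run, handling separately the degenerate case in which that run begins at index $0$ with $\nu_0=1$ (where Theorem~\ref{T: ker_form} needs its evident variant). Everything else is bookkeeping with the commuting shift operators $\omega$ and $\sigma$.
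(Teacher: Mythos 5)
Your proposal is correct and follows essentially the same route as the paper's proof: trace the constant-nullity block back to the increasing run to obtain the staircase basis built from a single vector $\mathbf{k}$ with nonzero first and last entries, observe that (in the $\omega$ branch) the condition $\nu_{m+2}=a$ reduces to a single linear equation $k_0b_{m+2}+k_1b_{m+1}+\cdots+k_rb_{m+2-r}=0$ in $b_{m+2}$ alone with $a_{m+2}$ free, and invoke Theorem \ref{T: equal equal down} to rule out nullity $a+1$ so the remaining $q^2-q$ pairs give nullity $a-1$. The paper phrases the dichotomy as whether the vector $[\mathbf{k},0,\dots,0]$ lies in $\ker(\mathcal{A}_{m+2})$, which is the same single condition you isolate; your treatment of why the other staircase conditions are automatic is, if anything, slightly more explicit.
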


\begin{proof}
From Theorem \ref{T: null_down} and Theorem \ref{T: equal equal down} it follows that there is an $n\in \mathbb{N}$ such that, starting with $n-1$ and ending with $m+1$, the nullity string $\nu_{n-1}$ through $\nu_{m+1}$ is $0,1,\dots,a-1,a,\dots,a$  (or if $n=1$ it is possible that the nullity string is $1,\dots,a-1,a,\dots,a$).  Let $\bold k$ span $\text{ker}(\mathcal{A}_n)$ then from Theorem \ref{T: null_same_same} there is a positive integer $s$ such that $\text{ker}(\mathcal{A}_{m+1}) = \omega^s(\text{ker}(\mathcal{A}_{m+1-s}))$ or $\sigma^s(\text{ker}(\mathcal{A}_{m+1-s}))$ where $\text{ker}(\mathcal{A}_{m+1-s}) = \text{span}\{[\bold k,0,\dots,0],[0,\bold k,0,\dots,0],\dots[0,\dots,0,\bold k]\}$. If $\text{ker}(\mathcal{A}_{m+1}) = \omega^s(\text{ker}(\mathcal{A}_{m+1-s}))$ then since the first entry of $\bold k$ is nonzero, we may choose $b_{m+2}$ so that $\bold v = [\bold k,0,\dots,0]\in F^{m+3}$ either is or is not in $\text{ker}(\mathcal{A}_{m+2})$. If $\bold v \in \text{ker}(\mathcal{A}_{m+2})$ then the first column of $\mathcal{A}_{m+2}$ is in the span of the succeeding columns, hence the last row of $\mathcal{A}_{m+2}$ is in the span of the preceding rows, and it then follows that $\omega(\text{ker}(\mathcal{A}_{m+1}))\subset \text{ker}(\mathcal{A}_{m+2})$. Then by the preceding theorem $\mathcal{A}_{m+1}$ and $\mathcal{A}_{m+2}$ have the same nullity, so
$\omega(\text{ker}(\mathcal{A}_{m+1}))= \text{ker}(\mathcal{A}_{m+2})$.  Note that this holds regardless of the choice of $a_{m+2}$.  Thus there are $q$ choices of pairs ${b_{m+2},a_{m+2}}$ such that $\nu_{m+2}=a$ (where $b_{m+2}$ is fixed and $a_{m+2}$ is arbitrary).  On the other hand, if $\bold v\notin \text{ker}(\mathcal{A}_{m+2})$ then since $\text{ker}(\mathcal{A}_{m+2})$ is neither $\omega(\text{ker}(\mathcal{A}_{m+1}))$ nor $\sigma(\text{ker}(\mathcal{A}_{m+1}))$ it follows from the preceding corollary that $\nu_{m+2}= a-1$.  Since this is true regardless of the choice of $a_{m+2}$ there are $q^2-q$ choices of a matrix $\mathcal{A}_{m+2}$ satisfying these conditions (there are $q-1$ choices for $b_{m+2}$ and $q$ choices for $a_{m+2}$).

A similar argument holds for the case where $\text{ker}(\mathcal{A}_{m+1}) = \sigma^s(\text{ker}(\mathcal{A}_{m+1-s}))$.
\end{proof}

\begin{remark}\label{R: null_string}
Note that Theorem \ref{T: null_string} follows by assembling Proposition \ref{P: null}, the corollary to Theorem \ref{T: ker_form}, Theorem \ref{T: null_up} and its corollary, Theorem \ref{T: null_down} and Theorem \ref{T: null_same_count}.
\end{remark}

\section{Additional Counting Results for Nullity Patterns}\label{S: counting}

In this section we complete the counting arguments to determine the number of strings of Toeplitz matrices satisfying the patterns which appear in Theorem \ref{T: null_string}.  See Remark \ref{R: counting_summary} for a summary of these computations.

\begin{theorem}\label{T: 100_101}
If for some $n\in \mathbb{Z}^+$ we have $\nu_n = 1$ and $\nu_{n+1} = 0$, then there are $q^2-q$ pairs of entries $\{b_{n+2},a_{n+2}\}$ for which the Toeplitz matrix $\mathcal{A}_{n+2}$ satisfies $\nu_{n+2} = 0$ and $q$ for which $\nu_{n+2} = 1$.
\end{theorem}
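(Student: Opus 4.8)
The plan is to fix the lower entry $b_{n+2}$, show that then exactly one value of $a_{n+2}$ produces $\nu_{n+2}=1$, and sum over the $q$ choices of $b_{n+2}$. Since $\nu_{n+1}=0$, Proposition \ref{P: null} already tells us $\nu_{n+2}\in\{0,1\}$, so this accounting will give $q$ pairs $\{b_{n+2},a_{n+2}\}$ with $\nu_{n+2}=1$ and the remaining $q^2-q$ pairs with $\nu_{n+2}=0$, as asserted.

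First I would locate the only candidate for a kernel vector of $\mathcal{A}_{n+2}$. View $\mathcal{A}_{n+1}$ as the lower right $(n+2)\times(n+2)$ block of $\mathcal{A}_{n+2}$, occupying rows and columns $1$ through $n+2$, so that the first column of $\mathcal{A}_{n+2}$ is $[a_0,b_1,\dots,b_{n+2}]^t$. For a vector $\mathbf{x}=[x_0,x_1,\dots,x_{n+2}]^t$, the $n+2$ equations coming from rows $1$ through $n+2$ of $\mathcal{A}_{n+2}\mathbf{x}=\mathbf{0}$ say precisely that $x_0[b_1,\dots,b_{n+2}]^t+\mathcal{A}_{n+1}[x_1,\dots,x_{n+2}]^t=\mathbf{0}$; since $\mathcal{A}_{n+1}$ is invertible, this forces $[x_1,\dots,x_{n+2}]^t=-x_0\,\mathcal{A}_{n+1}^{-1}[b_1,\dots,b_{n+2}]^t$, so the solution space of those rows is the line spanned by the single vector $\mathbf{y}=[1,y_1,\dots,y_{n+2}]^t$ with $[y_1,\dots,y_{n+2}]^t=-\mathcal{A}_{n+1}^{-1}[b_1,\dots,b_{n+2}]^t$. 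This $\mathbf{y}$ depends on $b_{n+2}$, but not on $a_{n+2}$, because $a_{n+2}$ occurs only in row $0$ of $\mathcal{A}_{n+2}$. Since $\text{ker}(\mathcal{A}_{n+2})$ is contained in this line, we get $\nu_{n+2}=1$ if and only if $\mathbf{y}$ also satisfies the remaining row-$0$ equation $a_0+a_1y_1+\dots+a_{n+1}y_{n+1}+a_{n+2}y_{n+2}=0$.

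The crux — and the step I expect to be the only real work — is to show that $y_{n+2}\neq0$, for every choice of $b_{n+2}$. Granting this, the row-$0$ equation is linear in $a_{n+2}$ with nonzero leading coefficient $y_{n+2}$, hence has a unique solution; so for each of the $q$ values of $b_{n+2}$ there is exactly one $a_{n+2}$ with $\nu_{n+2}=1$, and the count follows. To prove $y_{n+2}\neq0$, I would argue by contradiction: suppose $y_{n+2}=0$ and let $\mathbf{y}'=[1,y_1,\dots,y_{n+1}]^t$ be $\mathbf{y}$ with its (vanishing) last entry deleted. Because $y_{n+2}=0$, the equations given by rows $1$ through $n+1$ of $\mathcal{A}_{n+2}$ applied to $\mathbf{y}$ involve only $y_0,\dots,y_{n+1}$, and the restriction of those rows to the first $n+2$ columns is exactly rows $1$ through $n+1$ of the top left copy of $\mathcal{A}_{n+1}$ inside $\mathcal{A}_{n+2}$; hence $\mathbf{y}'$ is annihilated by rows $1$ through $n+1$ of $\mathcal{A}_{n+1}$. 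On the other hand, let $\mathbf{k}=[k_0,\dots,k_n]^t$ span $\text{ker}(\mathcal{A}_n)$; since $\mathcal{A}_n$ occupies rows and columns $1$ through $n+1$ of $\mathcal{A}_{n+1}$, the vector $\sigma(\mathbf{k})=[0,k_0,\dots,k_n]^t$ is also annihilated by rows $1$ through $n+1$ of $\mathcal{A}_{n+1}$. But $\mathcal{A}_{n+1}$ is nonsingular, so these $n+1$ rows are linearly independent, and the subspace of $F^{n+2}$ they annihilate is one dimensional; therefore $\mathbf{y}'$ must be a scalar multiple of $\sigma(\mathbf{k})$. This is absurd, since the first entry of $\mathbf{y}'$ is $1$ while that of $\sigma(\mathbf{k})$ is $0$. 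Hence $y_{n+2}\neq0$.

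Finally, assembling the pieces: for each of the $q$ values of $b_{n+2}$ there is exactly one $a_{n+2}$ with $\nu_{n+2}=1$ and $q-1$ with $\nu_{n+2}=0$, which yields $q$ pairs of the first type and $q^2-q$ of the second. Apart from the key inequality $y_{n+2}\neq0$, the only things needing care are the routine identifications of the relevant $(n+2)\times(n+2)$ and $(n+1)\times(n+1)$ submatrices of $\mathcal{A}_{n+2}$ with $\mathcal{A}_{n+1}$ and $\mathcal{A}_n$, and the observation that $a_{n+2}$ plays no role in the linear system defining $\mathbf{y}$; I foresee no obstacle beyond this bookkeeping.
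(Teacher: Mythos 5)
Your argument is correct, and it is essentially the transpose of the paper's. The paper fixes $a_{n+2}$, uses the invertibility of $\mathcal{A}_{n+1}$ (in the top left corner) to build a column-relation vector $\bold L=[\ell_0,\dots,\ell_{n+1},1]^t$ with $\mathcal{A}_{n+2}\bold L=[0,\dots,0,*]^t$, proves the key nonvanishing $\ell_0\neq 0$ from the rank jump $\rho_{n+1}=\rho_n+2$, and then lets $b_{n+2}$ vary; you instead fix $b_{n+2}$, use the invertibility of $\mathcal{A}_{n+1}$ (in the bottom right corner) to pin down the unique candidate kernel line spanned by $\bold y=[1,y_1,\dots,y_{n+2}]^t$, prove the key nonvanishing $y_{n+2}\neq 0$, and let $a_{n+2}$ vary. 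Your proof of the nonvanishing is also different: rather than the rank-jump argument, you observe that $\bold y'$ would lie in the one-dimensional common annihilator of rows $1$ through $n+1$ of $\mathcal{A}_{n+1}$, which is spanned by $\sigma(\bold k)$ with $\bold k$ generating $\ker(\mathcal{A}_n)$, and the clash of first entries gives the contradiction; this is a clean use of the hypothesis $\nu_n=1$. One small advantage of your arrangement is that $\nu_{n+2}\leq 1$ is immediate (the kernel sits inside the line spanned by $\bold y$), so you avoid the paper's separate contradiction argument with the auxiliary vector $\bold P$ showing that $\mathcal{A}_{n+2}$ is invertible when $\bold L\notin\ker(\mathcal{A}_{n+2})$. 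Both proofs yield the same count of $q$ pairs giving nullity $1$ and $q^2-q$ giving nullity $0$.
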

\begin{proof}
First note that since $\nu_n = 1$ and $\nu_{n+1} = 0$, it follows that $\rho_{n+1} = \rho_n +2$, and therefore $[a_{n+1},\dots,a_1]^t$ is not the zero vector.
For if it were, then matrix $\mathcal{A}_n$ and the $(n+1)\times (n+2)$ matrix obtained by appending $[a_{n+1},\dots,a_1]^t = [0,\dots,0]^t$ as the last column, both have the same rank.  So then $\rho(\mathcal{A}_{n+1}) \leq \rho(\mathcal{A}_n)+1$, a contradiction.
Fix $a_{n+2}$.  Since $\mathcal{A}_{n+1}$ is invertible there is a vector $\bold {\ell} = [\ell_0,\ell_1,\dots,\ell_{n+1}]^t$ satisfying $\mathcal{A}_{n+1}\cdot \ell = [-a_{n+2},-a_{n+1},\dots,-a_1]^t$.  Note that $\ell_0 \neq 0$, for otherwise it would follow (since $\mathcal{A}_{n}$ is embedded in the lower right corner of $\mathcal{A}_{n+1}$) that $[a_{n+1},a_n,\dots,a_1]^t$ is in the span of the column space of $\mathcal{A}_n$, and therefore the rank of $\mathcal{A}_{n+1}$ is at most one greater than the rank of $\mathcal{A}_n$.  But the assumption that $\nu_{n+1} = 0$ and $\nu_n =1$ means that $\rho_{n+1} = 2 + \rho_n$.
Setting $\bold L = [\bold \ell,1]^t = [\ell_0, \ell_1,\dots,\ell_{n+1},1]^t$, then $\mathcal{A}_{n+2}\cdot \bold L = [0,\dots,0,*]^t$.  Then there are $q-1$ choices of $b_{n+2}$ so that the last entry of $\mathcal{A}_{n+2}\cdot \bold L$ is nonzero, so $\bold L$ is not in $\text{ker}(\mathcal{A}_{n+2})$.  Suppose there is a non-zero vector $\bold P$ in $\text{ker}(\mathcal{A}_{n+2})$: then since the first $n+2$ columns of $\mathcal{A}_{n+2}$ are linearly independent (as a consequence of $\mathcal{A}_{n+1}$ lying in the top left corner of $\mathcal{A}_{n+2}$), it follows that the last entry of $\bold P$ is nonzero.  Therefore $\mathcal{A}_{n+2}\cdot (\bold L + \bold P) = [0,\dots,0,c]^t$ with $c\neq 0$.   Replacing $\bold P$ with a nonzero scalar multiple of $\bold P$, if necessary, we may assume that $\bold L + \bold P$ has last entry $0$. Since $\bold L + \bold P$ has last entry $0$ then by viewing $\mathcal{A}_{n+1}$ in the top left corner of $\mathcal{A}_{n+2}$, the last equation implies that the columns of $\mathcal{A}_{n+1}$ are dependent, a contradiction.  Therefore, by contradiction, $\mathcal{A}_{n+2}$ is invertible.

Thus for each $a_{n+2}$ there is are $q-1$ entries $b_{n+2}$ such that the corresponding matrix $\mathcal{A}_{n+2}$ is invertible.  So there are $q^2-q$ pairs ${b_{n+2},a_{n+2}}$ for which $\mathcal{A}_{n+2}$ is invertible.  This leaves $q$ choices of the pair $\{b_{n+2},a_{n+2}\}$ corresponding to matrices $\mathcal{A}_{n+2}$ of nullity $1$.
\end{proof}

\begin{theorem}\label{T: zero_zero}
Suppose $\mathcal{A}_{n} \rightharpoonup \mathcal{A}_{n+1}$ are Toeplitz matrices for some $n\in \mathbb{Z}^+$ with $\nu_{n} = 0 = \nu_{n+1}$.  Then there are $q^2-1$ choices of entries $(b_{n+2},a_{n+2})$ for a matrix $\mathcal{A}_{n+2}\leftharpoonup \mathcal{A}_{n+1}$,  such that $\nu_{n+2} = 0$, and one such that $\nu_{n+2} = 1$.
\end{theorem}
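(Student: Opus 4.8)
The plan is to convert the count into a single determinant computation and then into the number of $F$-points of a conic. Since $\mathcal{A}_{n+1}$ occupies the top-left corner of $\mathcal{A}_{n+2}$, the only entries of $\mathcal{A}_{n+2}$ not already fixed by $\mathcal{A}_{n+1}$ are $a_{n+2}$, sitting in the single position $(0,n+2)$, and $b_{n+2}$, sitting in the single position $(n+2,0)$. By Proposition \ref{P: null}, $\nu_{n+1}=0$ forces $\nu_{n+2}\in\{0,1\}$, so the matrices with $\nu_{n+2}=1$ are exactly the singular extensions, and it suffices to count the pairs $(b_{n+2},a_{n+2})$ that make $\mathcal{A}_{n+2}$ singular. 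Writing $x=b_{n+2}$ and $y=a_{n+2}$ and expanding $\det\mathcal{A}_{n+2}$ along the one row and the one column carrying these entries, multilinearity produces a bilinear form
\begin{equation*}
\det\mathcal{A}_{n+2}=Pxy+Qx+Ry+S,
\end{equation*}
whose coefficients $P,Q,R,S$ depend only on the fixed data of $\mathcal{A}_{n+1}$.

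First I would identify the leading coefficient. As the coefficient of $a_{n+2}b_{n+2}$, $P$ equals, up to the sign contributed by the two anti-diagonal corner positions, the minor obtained from $\mathcal{A}_{n+2}$ by deleting rows and columns $0$ and $n+2$; that minor is exactly $\mathcal{A}_n$, so $P=-\det\mathcal{A}_n\neq0$ because $\nu_n=0$. With $P$ invertible I would complete the rectangle,
\begin{equation*}
\det\mathcal{A}_{n+2}=P^{-1}\bigl[(Px+R)(Py+Q)-(QR-PS)\bigr],
\end{equation*}
so that under the affine bijection $(x,y)\mapsto(Px+R,\,Py+Q)$ of $F^2$ the singular extensions correspond bijectively to the solutions $(u,v)$ of $uv=D$, where $D:=QR-PS$.

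The crucial step is to evaluate the invariant $D$. Here I would apply the Desnanot--Jacobi identity (Dodgson condensation) to $\mathcal{A}_{n+2}$ with respect to the two extreme indices $0$ and $n+2$: the two principal corner minors are both equal to $\det\mathcal{A}_{n+1}$, the doubly deleted minor is $\det\mathcal{A}_n$, and the two remaining minors are affine of degree one in $x$ and in $y$ respectively. Matching the coefficients of $1,x,y,xy$ and eliminating these two auxiliary minors yields $D=(\det\mathcal{A}_{n+1})^2$, which is nonzero since $\nu_{n+1}=0$. (The same value drops out of the Schur-complement form $\det\mathcal{A}_{n+2}=\det\mathcal{A}_{n+1}\,(a_0-\mathbf{s}^t\mathcal{A}_{n+1}^{-1}\mathbf{c})$, but the condensation route avoids inverting $\mathcal{A}_{n+1}$.)

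The main obstacle is the final count, and it is precisely the point at which the statement as worded must be examined. Since $D=(\det\mathcal{A}_{n+1})^2\neq0$, the hyperbola $uv=D$ has exactly $q-1$ points over $F$ ($u$ any nonzero scalar, with $v=D/u$ forced), so there are $q-1$ pairs $(b_{n+2},a_{n+2})$ giving $\nu_{n+2}=1$ and $q^2-q+1$ giving $\nu_{n+2}=0$. This agrees with the asserted one and $q^2-1$ exactly when $q=2$; thus the statement as written is the $GF(2)$ specialization that feeds the Daykin application, whereas for a general finite field the sharp split is $q-1$ and $q^2-q+1$. A second route, closest in spirit to the proof of Theorem \ref{T: 100_101}, gives the same conclusion: fixing $a_{n+2}$ and using invertibility of $\mathcal{A}_{n+1}$ produces the unique vector $\mathbf{L}=[\boldsymbol{\ell},1]^t$ annihilated by the top $n+2$ rows of $\mathcal{A}_{n+2}$; its first entry $\ell_0$ is affine in $a_{n+2}$ with nonzero slope $(\mathcal{A}_{n+1}^{-1})_{00}=\det\mathcal{A}_n/\det\mathcal{A}_{n+1}$, so $\ell_0\neq0$ for $q-1$ values of $a_{n+2}$, and for each such value Theorem \ref{T: null_one_vec} shows there is exactly one $b_{n+2}$ placing $\mathbf{L}$ in $\ker\mathcal{A}_{n+2}$, the one excluded value of $a_{n+2}$ contributing none.
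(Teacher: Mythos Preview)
Your argument is correct and, in fact, sharper than what the paper establishes. You take a genuinely different route: rather than manipulating kernel vectors directly, you reduce the question to counting $F$-points on the hyperbola $uv=D$ after a Desnanot--Jacobi computation identifies $D=(\det\mathcal{A}_{n+1})^2\neq 0$. This cleanly yields $q-1$ singular extensions and $q^2-q+1$ invertible ones over any finite field, and you are right that the theorem as stated ($1$ and $q^2-1$) is only the $q=2$ specialization. Your observation is corroborated by the paper itself: Remark~\ref{R: counting_summary}, items (1) and (2), records exactly the values $q^2-q+1$ and $q-1$ that you obtain, not the ones in the theorem statement.

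The paper's own proof proceeds instead by a uniqueness-plus-existence argument on kernel vectors: it supposes two distinct singular extensions $\mathcal{A}_{n+2}$ and $\mathcal{A}_{n+2}'$ with kernel generators $\boldsymbol{\ell}$ and $\mathbf{L}$, scales so that $(\boldsymbol{\ell}+\mathbf{L})_0=0$, and asserts that ${}'(\boldsymbol{\ell}+\mathbf{L})\in\ker\mathcal{A}_{n+1}$. That step is not justified in general, since the last row of the bottom-right copy of $\mathcal{A}_{n+1}$ applied to ${}'(\boldsymbol{\ell}+\mathbf{L})$ equals $-\ell_0(b_{n+2}-b_{n+2}')$, which vanishes only if $b_{n+2}=b_{n+2}'$; and indeed uniqueness fails for $q>2$. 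Your second route---fixing $a_{n+2}$, writing the unique candidate null vector $\mathbf{L}=[\boldsymbol{\ell},1]^t$, and tracking the affine dependence of $\ell_0$ on $a_{n+2}$ with nonzero slope $\det\mathcal{A}_n/\det\mathcal{A}_{n+1}$---is the corrected version of this kernel-vector approach and gives the same $q-1$ count. So your determinant method buys a clean, field-independent count with no case analysis, while your second method stays closer to the paper's style and shows precisely where the paper's uniqueness claim breaks for $q>2$.
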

\begin{proof}
By Proposition \ref{P: null} either $\nu_{n+2} = 0$ or $\nu_{n+2} = 1$.  Suppose there are two Toeplitz matrices $\mathcal{A}_{n+2}$ and $\mathcal{A}_{n+2}'$, both of nullity $1$, such that $\mathcal{A}_n \rightharpoonup \mathcal{A}_{n+1} \rightharpoonup \mathcal{A}_{n+2}$ and
$\mathcal{A}_n \rightharpoonup \mathcal{A}_{n+1} \rightharpoonup \mathcal{A}_{n+2}'$.  Then there are nonzero vectors $\ell$ and $\bold L$ in $F^{n+3}$, such that $\ell \in \text{ker}(\mathcal{A}_{n+2})$ and $\bold L\in \text{ker}(\mathcal{A}_{n+2}')$.  By Theorem \ref{T: null_one_vec} both $\ell$ and $\bold L$ have nonzero initial and final entries.  Therefore we may assume that the initial entry of $\ell +\bold L$ is $0$.  Then, viewing $\mathcal{A}_{n+1}$ as occupying the bottom right corner of $\mathcal{A}_{n+2}$ we conclude that $'(\ell +\bold L)$ is in $\text{ker}(\mathcal{A}_{n+1})$.  This contradicts $\nu_{n+1}=0$ unless $\ell+\bold L = \bold 0$.  So $\bold L = -\ell$, and so $a_{n+2}=a_{n+2}'$.  Similarly, $b_{n+2} = b_{n+2}'$.  So there is at most one $\mathcal{A}_{n+2}$ with nullity $1$.

To show that there is at least one Toeplitz math $\mathcal{A}_{n+2}$ with nullity $1$ and $\mathcal{A}_{n+1} \rightharpoonup \mathcal{A}_{n+2}$ choose  $a_{n+2} \neq a_{n+2}'$.  Then, since $\mathcal{A}_{n+1}$ is invertible there exist vectors $\bold v$ and $\bold V$ in $F^{n+2}$ such that
\begin{alignat}{2}
\mathcal{A}_{n+1}\bold v &= [-a_{n+2},-a_{n+1},\dots,-a_1]^t, \text{and}\\
\mathcal{A}_{n+1}\bold V &= [a_{n+2}',a_{n+1},\dots,a_1]^t.
\end{alignat}
The sum of the initial entries, $v_0 + V_0$, is nonzero, otherwise $'\bold v + '\bold V$ is in $\text{ker}(\mathcal{A}_n)$, a contradiction unless $\bold v = -\bold V$, which is false.  Suppose $v_0 \neq 0$ then we can choose $b_{n+2}$ such that $\mathcal{A}_{n+2}[v_0,v_1,\dots,v_{n+1},1]^t = \mathbf{0}$, so that $\mathcal{A}_{n+2}$ has nontrivial kernel.  Thus, by the comment made above, $\mathcal{A}_{n+2}$ has nullity $1$.
\end{proof}

\begin{remark}\label{R: counting_summary}
In this remark we summarize the results of the counting arguments we have made thus far. The first two integers of each line indicate the nullities $\nu_{n-1}$ and $\nu_n$ of a pair of fixed Toeplitz matrices $\mathcal{A}_{n-1}$ and $\mathcal{A}_n$, with $\mathcal{A}_{n-1} \rightharpoonup \mathcal{A}_{n}$.  The integer over the arrow represents the number of Toeplitz matrices $\mathcal{A}_{n+1}$ satisfying
$\mathcal{A}_{n-1} \rightharpoonup \mathcal{A}_n \rightharpoonup \mathcal{A}_{n+1}$ and having nullity $\nu_{n+1}$, the third integer entry of the line.
\begin{enumerate}
\item $0,0 \overset{q^2-q+1}{\rightarrow} 0$, Theorem \ref{T: zero_zero}
\item $0,0 \overset{q-1}{\rightarrow} 1$, Theorem \ref{T: zero_zero}
\item $1,0 \overset{q^2-q}{\rightarrow} 0$, Theorem \ref{T: 100_101}
\item $1,0 \overset{q}{\rightarrow} 1$, Theorem \ref{T: 100_101}
\item For $d \geq 1$, $d-1,d \overset{1}{\rightarrow} d+1$, Corollary to Theorem \ref{T: ker_form}
\item For $d \geq 1$, $d-1,d \overset{2q-2}{\rightarrow} d$, Corollary to Theorem \ref{T: null_up}
\item For $d \geq 1$, $d-1,d \overset{(q-1)^2}{\rightarrow} d-1$, Theorem \ref{T: null_up}
\item For $d \geq 1$, $d,d \overset{q}{\rightarrow} d$, Theorem \ref{T: null_same_count}
\item For $d \geq 1$, $d,d \overset{q^2-q}{\rightarrow} d-1$, Theorem \ref{T: null_same_count}
\item For $d \geq 2$, $d,d-1 \overset{q^2}{\rightarrow} d-2$, Theorem \ref{T: null_down}
\end{enumerate}
\end{remark}

For the remainder of this section we will assume that $\mathcal{F}$ is $GF(2)$.

\begin{definition}\label{D: C_string}
For $\ell\in \mathbb{Z}^+$ let $\mathcal{A}_{\ell}$ be a fixed Toeplitz matrix with nullity $\nu_{\ell}$ and let $\nu_{\ell},\nu_{\ell +1},\dots,\nu_p$.  Then the number of strings $\mathcal{A}_{\ell},\mathcal{A}_{\ell+1},\dots,\mathcal{A}_p$ with the prescribed nullities will be denoted by $C(\nu_{\ell},\nu_{\ell+1},\dots,\nu_p)$.
\end{definition}

\begin{lemma}\label{L: up_down_count}
Let $m$ be a positive integer, then $C(1,2,\dots,m-1,m,m-1,\dots,2,1,0) = 2^{2m-2}$.
\end{lemma}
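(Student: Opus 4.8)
The plan is to count the strings $\mathcal{A}_{\ell}, \mathcal{A}_{\ell+1}, \dots, \mathcal{A}_p$ realizing the nullity pattern $1,2,\dots,m-1,m,m-1,\dots,2,1,0$ by walking along the pattern one step at a time and multiplying the number of choices available at each transition, using the transition counts tabulated in Remark \ref{R: counting_summary}. The string has $2m$ matrices after $\mathcal{A}_\ell$, i.e.\ $2m$ transitions $\nu_i \to \nu_{i+1}$; but the very first transition is $\nu_{\ell-1},\nu_\ell = ?,1$, and here we must be careful, because the entries in Remark \ref{R: counting_summary} depend on the \emph{pair} $(\nu_{n-1},\nu_n)$, not just $\nu_n$. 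Since $\mathcal{A}_\ell$ is fixed and has nullity $1$, it must arrive from a matrix of nullity $0$ or $2$; in either case, the relevant ``going up by one'' transition $d-1,d \overset{1}{\to} d+1$ (item 5, with $d=2$) shows there is exactly one $\mathcal{A}_{\ell+1}$ with $\nu_{\ell+1}=2$. Strictly, to even start the induction on the ``ascending'' part I would want the string to be in ``ascending mode,'' i.e.\ the previous nullity was one less; this is guaranteed because a matrix of nullity $1$ at the start of an up-run either had a predecessor of nullity $0$ (giving pattern $0,1$, which after one more step forces $\nu=2$ by items 3--4 and then item 5) — so in all cases the string of matrices realizing $1,2,\dots,m$ from a fixed $\mathcal{A}_\ell$ is unique.

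Concretely, here is the count step by step. The ascending transitions are $1\to 2$, $2\to 3$, \dots, $(m-1)\to m$: that is $m-1$ transitions, each of the form $d-1,d \overset{1}{\to} d+1$ (item 5), so each contributes a factor $1$. The transition at the top, $m\to m-1$: the pair feeding in is $(m-1,m)$, so by item 6, $d-1,d \overset{2q-2}{\to} d$ is not what we want — we want $\nu$ to \emph{drop}, which is item 7: $d-1,d \overset{(q-1)^2}{\to} d-1$, contributing $(q-1)^2$. Over $GF(2)$, $q=2$, so this factor is $(q-1)^2 = 1$. Then the descending transitions $m-1 \to m-2 \to \cdots \to 1 \to 0$: there are $m-1$ of these; the pair feeding each is $(m,m-1)$, then $(m-1,m-2)$, etc., i.e.\ each is of the form $d,d-1 \overset{q^2}{\to} d-2$ (item 10), contributing $q^2 = 4$ apiece over $GF(2)$. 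Multiplying: $1^{m-1} \cdot 1 \cdot 4^{m-1} = 4^{m-1} = 2^{2m-2}$, as claimed. For $m=1$ the pattern is $1,0$ with no ascending part and no descending part past $0$; there is one matrix of nullity $1$ (fixed), one transition $\nu_\ell = 1 \to 0$, and since the pair feeding in is $(0,1)$ or $(2,1)$ one checks the count is $2^{0} = 1$, consistent with the formula.

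The one subtlety I would be careful to address is the dependence of the transition counts on the incoming nullity, i.e.\ making sure each transition in the chain is matched to the correct line of Remark \ref{R: counting_summary}. For the interior of the chain this is automatic since the pattern is strictly monotone up then strictly monotone down and the pairs $(\nu_{i-1},\nu_i)$ are exactly $(j-1,j)$ on the way up, $(m-1,m)$ at the peak transition, and $(j,j-1)$ on the way down — matching items 5, 7, and 10 respectively. The only place needing an extra word is the first step, where $\nu_{\ell-1}$ is not specified by the problem: here I would remark that regardless of whether $\mathcal{A}_\ell$ of nullity $1$ was reached from nullity $0$ or nullity $2$, Theorem \ref{T: null_one_vec} and the Corollary to Theorem \ref{T: ker_form} force $\mathcal{A}_{\ell+1}$ with $\nu_{\ell+1}=2$ to be unique, so the factor contributed by the $1\to 2$ step is still $1$. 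The main ``obstacle'' is thus really just bookkeeping — lining up the $2m$ transitions with the right rows of the table and observing that over $GF(2)$ all the ``$(q-1)^2$'' and ``$1$'' factors collapse, leaving only the $m-1$ downward factors of $q^2 = 4$.
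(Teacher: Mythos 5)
Your proposal is correct and follows essentially the same route as the paper: multiply the per-step transition counts (one choice for each ascent $d-1,d \to d+1$ via the Corollary to Theorem \ref{T: ker_form}, $(q-1)^2=1$ for the peak drop via Theorem \ref{T: null_up}, and $q^2=4$ for each forced descent via Theorem \ref{T: null_down}), giving $4^{m-1}=2^{2m-2}$. Your extra care about matching each transition to the correct incoming pair $(\nu_{i-1},\nu_i)$ is a welcome refinement the paper leaves implicit (note only the harmless slip that there are $2m-1$, not $2m$, transitions).
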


\begin{proof}
For $0 < \nu_s$ and $\nu_{s+1}= \nu_s +1$, by the corollary to Theorem \ref{T: ker_form} there is, for a given matrix $\mathcal{A}_s$ with nullity $\nu_s$, only one $\mathcal{A}_{s+1}$ with $\mathcal{A}_s\rightharpoonup \mathcal{A}_{s+1}$ and with nullity $\nu_{s+1}$.  If $\mathcal{A}_s = m$ only one $\mathcal{A}_{s+1}$ with $\mathcal{A}_s\rightharpoonup \mathcal{A}_{s+1}$ satisfies $\nu_{s+1} = m-1$, by Theorem \ref{T: null_up}. By Theorem \ref{T: null_down} if $m-1 \geq \nu_s \geq 1$ then there are four choices of $\mathcal{A}_{s+1}$ with nullity $\nu_{s+1} = \nu_{s} - 1$.
\end{proof}

\begin{lemma}\label{L: plateau_count}  If $\nu_{n-1}=0$ and $\nu_n=\nu_{n+1}=\dots=\nu_{n+u-1}=1$, where $u>1$, then $C(\nu_n,\dots,\nu_{n+u-1})=C(1,1,\dots,1) = 2^{u-1}$.  If $m>1$ and $m$ is repeated $u$ times, then  $C(1,2,\dots,m-1,m,m,\dots,m,m-1,\dots,2,1,0)=2^u 2^{2m-2}$
\end{lemma}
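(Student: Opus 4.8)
The plan is to treat the two claims of Lemma~\ref{L: plateau_count} by combining the ``plateau'' count from Theorem~\ref{T: null_same_count} with the ``up-then-down'' count from Lemma~\ref{L: up_down_count}, working over $GF(2)$ so that $q=2$ throughout. First I would set up the general principle: since each matrix in a string is determined by the single new pair $\{b_{s+1},a_{s+1}\}$ appended when passing from $\mathcal{A}_s$ to $\mathcal{A}_{s+1}$, the number of strings realizing a prescribed nullity pattern is the product, over each step, of the number of admissible pairs at that step, \emph{provided} the count of admissible pairs at each step depends only on the local nullity data $(\nu_{s-1},\nu_s,\nu_{s+1})$ and not on which particular matrix $\mathcal{A}_s$ was reached. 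That independence is exactly what the theorems in Section~\ref{S: Nullity} (assembled in Remark~\ref{R: counting_summary}) provide: the entry over each arrow there is a fixed integer.

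For the first claim, I would argue as follows. We are given $\mathcal{A}_{n-1}$ with $\nu_{n-1}=0$ and a string with nullities $1,1,\dots,1$ of length $u$. The first step $0\to 1$ is forced context (the matrix $\mathcal{A}_n$ of nullity $1$ is fixed as the starting point of the $C$-count, per Definition~\ref{D: C_string}). For each subsequent step from a matrix of nullity $1$ (in the context $\nu_{s-1}=1,\nu_s=1$) to a matrix of nullity $1$, Theorem~\ref{T: null_same_count} with $a=1$ and $q=2$ gives exactly $q=2$ choices of the new pair (here the relevant case of Theorem~\ref{T: null_same_count} is $d,d\to d$ with $d=1$, giving $q=2$). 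There are $u-1$ such steps, so $C(1,1,\dots,1)=2^{u-1}$. I should double-check the edge case that after the first $1$ the ``context'' for Theorem~\ref{T: null_same_count} is genuinely $\nu_{n-1}=0,\nu_n=1$ rather than $1,1$; but Theorem~\ref{T: null_same_count} only requires $\nu_s=\nu_{s+1}=a$ with $a>0$ and derives the needed structural fact (that $\text{ker}(\mathcal{A}_{m+1})$ is $\omega^s$ or $\sigma^s$ of an earlier kernel) from Theorems~\ref{T: null_down}, \ref{T: equal equal down}, \ref{T: null_same_same}, so it applies verbatim here.

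For the second claim I would decompose the pattern $1,2,\dots,m-1,m,m,\dots,m,m-1,\dots,2,1,0$ (with $m$ repeated $u$ times, $u>1$, $m>1$) into three consecutive segments and multiply their counts. The ascending segment $1,2,\dots,m$ contributes a factor $1$ at each step by the Corollary to Theorem~\ref{T: ker_form} (each nullity-increasing step from a positive nullity has a unique admissible pair). The plateau consists of $u-1$ steps of type $m,m\to m$; by Theorem~\ref{T: null_same_count} with $a=m$, $q=2$ each contributes $q=2$, for a factor $2^{u-1}$. The final step off the plateau, of type $m,m\to m-1$, by Theorem~\ref{T: null_same_count} contributes $q^2-q=2$; then by Theorem~\ref{T: equal equal down} the remaining nullities are forced to be $m-2,\dots,1,0$, and by Theorem~\ref{T: null_down} (cf.\ Lemma~\ref{L: up_down_count}, where the descending steps $d,d-1\to d-2$ each have $q^2=4$ choices) the descent from $m-1$ down to $0$ contributes $4^{m-1}=2^{2m-2}$ — wait, I need to be careful about where the descent begins. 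In Lemma~\ref{L: up_down_count} the descending part is $m,m-1,\dots,1,0$ and the count $2^{2m-2}$ comes from: the first descending step $m\to m-1$ has $1$ choice (Theorem~\ref{T: null_up}, i.e.\ the $d-1,d\to d$ entry with $d=m$, one pair), and the remaining $m-1$ steps $d,d-1\to d-2$ each have $q^2=4$ choices, giving $4^{m-1}=2^{2m-2}$. Here the situation differs: the step leaving the plateau is $m,m\to m-1$ (factor $q^2-q=2$, not $1$), and then the remaining descent $m-1\to m-2\to\cdots\to 1\to 0$ has $m-1$ steps each of type $d,d-1\to d-2$, contributing $4^{m-1}=2^{2m-2}$. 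So the grand total is $1\cdot 2^{u-1}\cdot 2\cdot 2^{2m-2}=2^{u}\,2^{2m-2}$, matching the claim.

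The main obstacle — and the step to present carefully — is the bookkeeping of \emph{which} Remark~\ref{R: counting_summary} entry governs each step, particularly at the two ``corners'' of the pattern: the transition into the plateau (last ascending step $m-1,m\to m$, which by entry (5) has exactly $1$ choice and thus does not affect the count) and the transition out of it (step $m,m\to m-1$, entry (9), factor $q^2-q=q=2$ over $GF(2)$). One must verify that the hypotheses of Theorem~\ref{T: null_same_count} and Theorem~\ref{T: null_down} are met at these junctures (they are, since the whole pattern up to each point is of the canonical form required by those theorems), and that after the forced segment begins no further choices arise. Everything else is a routine product of the per-step counts, and the factor-of-$q$ independence of the particular matrix is guaranteed by the structure theorems already proved.
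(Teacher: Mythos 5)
Your proposal is correct and follows essentially the same route as the paper: multiply the per-step extension counts (one choice per ascending step, two per plateau step, two to leave the plateau, four per descending step), which is exactly the paper's argument. The only cosmetic quibble is that the very first $1\to 1$ (resp.\ $m\to m$) step has context $\nu_{s-1}=\nu_s-1$, so strictly it is governed by the Corollary to Theorem~\ref{T: null_up} (giving $2q-2=2$) rather than Theorem~\ref{T: null_same_count}; since $2q-2=q$ over $GF(2)$ this does not change the count, and the paper glosses over the same point.
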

\begin{proof}  The first statement follows from Theorem \ref{T: null_same_count}.  For the second statement, as the nullity increases from $1$ to $m$ there is only one choice of matrix at each stage, by Theorem \ref{T: ker_form}.  For each of the $u-1$ occurrences of nullity $m$ after the first, there are two choices of matrix which fix the nullity at $m$, and then following the last occurrence of $m$ there are two choices of matrix that drop the nullity from $m$ to $m-1$.  Thereafter, as the nullity drops from $m-1$ to $0$ there are four choices of matrix at each stage.  So we have a total of $2^{u-1}\cdot 2\cdot 4^{m-1}$ choices
\end{proof}
The following result will be an essential part of the argument that half of the Toeplitz matrices of a given size are invertible.  It will also be useful further on when we calculate the number of Toeplitz matrices of a given size of a specified rank.
\begin{theorem}\label{positive_string}
Let $\ell\in \mathbb{Z}^+$ and let $\mathcal{A}_{\ell}$ be a Toeplitz matrix of nullity $1$, where we assume that $\nu_{\ell -1} = 0$ if $\ell \geq 1$.
Let $n$ be a positive integer.  Then the number of matrices $\mathcal{A}_{\ell + n}$ satisfying

\begin{enumerate}
\item $\mathcal{A}_{\ell} \rightharpoonup \mathcal{A}_{\ell + 1} \rightharpoonup \cdots \rightharpoonup \mathcal{A}_{\ell + n}$,
\item $\nu_s > 0$ for $s=1,2,\dots n-1$, and
\item $\nu_{\ell + n} = 0$
\end{enumerate}
\noindent is $n2^{n-1}$.
\end{theorem}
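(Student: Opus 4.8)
The plan is to sort the admissible strings by the shape of their nullity sequence, count each shape with the lemmas already proved, and sum the contributions.

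First I would record what the nullity sequence of such a string must look like. Since $\nu_{\ell-1}=0$ (or $\ell=0$) and $\nu_\ell=1$, while $\nu_{\ell+1},\dots,\nu_{\ell+n-1}$ are positive and $\nu_{\ell+n}=0$, the block $\nu_\ell,\dots,\nu_{\ell+n}$ is exactly one finite string of type (b) or (c) from item (4) of Theorem \ref{T: null_string}; that is, there are integers $m\ge 1$ and $u\ge 1$ such that $\nu_\ell,\dots,\nu_{\ell+n}$ equals
\[
1,2,\dots,m-1,\underbrace{m,m,\dots,m}_{u},m-1,\dots,2,1,0,
\]
and in particular $n=2m+u-2$. (This follows from the earlier results: from a still strictly increasing prefix the nullity may rise by $1$, stay, or drop, by the corollary to Theorem \ref{T: ker_form} and Theorem \ref{T: null_up}; from a plateau it may only stay or drop, by Theorem \ref{T: equal equal down}; and once it drops it continues to drop monotonically to $0$, by Theorem \ref{T: null_down}. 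Hence any admissible nullity sequence is a single rise--plateau--descent hump whose terminal $0$ ends it.) Conversely, every pair $(m,u)$ with $m\ge 1$, $u\ge 1$, $2m+u-2=n$ occurs.

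Next I would count, for each such $(m,u)$, the number $C(m,u)$ of admissible strings with that nullity sequence. For $u=1$ this is $C(1,2,\dots,m-1,m,m-1,\dots,2,1,0)=2^{2m-2}=2^{n-1}$ by Lemma \ref{L: up_down_count} (the degenerate case $m=1$ being the single string of nullities $1,0$, in agreement with item (7) of Remark \ref{R: counting_summary}). For $m\ge 2$ and $u\ge 2$ it is $2^{u}\cdot 2^{2m-2}=2^{n}$ by Lemma \ref{L: plateau_count}. The remaining case $m=1$, $u\ge 2$, with nullity sequence $1,\dots,1,0$ ($u$ ones), is obtained by combining the first part of Lemma \ref{L: plateau_count}, which produces $2^{u-1}$ strings of nullities $1,\dots,1$, with the $q^2-q=2$ choices of $\{b,a\}$ that bring the last nullity from $1$ down to $0$ (item (9) of Remark \ref{R: counting_summary}), for a total of $2^{u}=2^{n}$. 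Thus in all cases
\[
C(m,u)=\begin{cases} 2^{n-1}, & u=1,\\ 2^{n}, & u\ge 2.\end{cases}
\]

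Finally I would sum over the admissible pairs. There is a pair with $u=1$ exactly when $n$ is odd, namely $m=(n+1)/2$, contributing $2^{n-1}$; the pairs with $u\ge 2$ are precisely those with $1\le m\le\lfloor n/2\rfloor$, hence $\lfloor n/2\rfloor$ of them, each contributing $2^{n}$. Writing $n=2\lfloor n/2\rfloor+\varepsilon$ with $\varepsilon\in\{0,1\}$ equal to $1$ exactly when $n$ is odd, the total is
\[
\varepsilon\cdot 2^{n-1}+\lfloor n/2\rfloor\cdot 2^{n}=\big(\varepsilon+2\lfloor n/2\rfloor\big)\cdot 2^{n-1}=n\cdot 2^{n-1},
\]
as desired.

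I expect the first step to be the delicate one: extracting, from the accumulated local results, the precise rise--plateau--descent shape of the nullity sequence, in particular excluding any secondary rise after a plateau or during the descent and confirming that the terminal $0$ closes the block rather than opening a new one. The rest is bookkeeping; the only real care needed is with the degenerate values $m=1$ and $u=1$, where Lemmas \ref{L: up_down_count} and \ref{L: plateau_count} are applied at their boundary cases or supplemented by a single transition count from Remark \ref{R: counting_summary}.
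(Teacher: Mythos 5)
Your proof is correct, and it reaches the count by a genuinely different organization than the paper's. The paper proves the formula by induction on $n$ in steps of two (treating even and odd $n$ separately): it maps each admissible nullity string of length $n+1$ to one of length $n+3$ by inserting the peak value $m$ twice, observes via Lemma \ref{L: plateau_count} that this multiplies the matrix count by $4$, and adds the one new ``tall'' string $1,2,\dots,\tfrac{n+2}{2},\tfrac{n+2}{2},\dots,1,0$ that does not arise this way; the recursion $4\cdot n2^{n-1}+4\cdot 2^{n}=(n+2)2^{n+1}$ then closes the induction. You instead enumerate the admissible rise--plateau--descent shapes directly by the pair $(m,u)$ with $2m+u-2=n$, evaluate $C(m,u)\in\{2^{n-1},2^{n}\}$ from Lemmas \ref{L: up_down_count} and \ref{L: plateau_count}, and sum in closed form. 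The ingredients are identical (the same classification of nullity strings and the same two counting lemmas), but your version avoids the even/odd case split, makes the parameter count $\lfloor n/2\rfloor+\varepsilon$ explicit, and --- a point the paper passes over --- handles the boundary shape $m=1$, $u\ge 2$ (nullity string $1,1,\dots,1,0$), which is not literally covered by the second statement of Lemma \ref{L: plateau_count} since that statement assumes $m>1$; your patch via the first statement of that lemma together with the transition count $q^{2}-q=2$ from Remark \ref{R: counting_summary} is exactly what is needed. The one step you flag as delicate, namely that the nullity block from $\nu_{\ell}=1$ to the first subsequent zero must be a single hump, is indeed the crux, and your justification from the corollary to Theorem \ref{T: ker_form}, Theorem \ref{T: null_up}, Theorem \ref{T: equal equal down}, and Theorem \ref{T: null_down} (equivalently, from Theorem \ref{T: null_string} itself, which is already established at this point) is sound.
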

\begin{proof} For $n = 1$ the only corresponding nullity string is $\nu_{\ell},\nu_{\ell +1}$ which can only be $1,0$.  By Theorem \ref{T: null_up} there is only one choice for $\mathcal{A}_{\ell +1}$ having nullity $0$.  For $n=2$ the only nullity string is $1,1,0$ and there are $4$ matrices $\mathcal{A}_{\ell +2}$ with this string, Suppose the statement is true for some even integer $n\geq 2$.  Then for strings of matrices of length $n+2$ satisfying the hypotheses, there are two types of nullity strings.  First there are the nullity strings $1,2,\dots,m-1,m,\dots,m,m-1,\dots,2,1$ obtained from the nullity strings of length $n$ by inserting $m$ twice, and there is the nullity string $1,2,\dots,\frac{n}{2},\frac{n+2}{2},\frac{n+2}{2},\frac{n}{2},\dots,2,1,0$.  By Lemma \ref{L: plateau_count} there are $4\cdot 4^{\frac{n}{2}} = 4\cdot 2^{n}$ of the latter and by the induction assumption and Lemma \ref{L: plateau_count} there are $4n2^{n-1}$ of the former.  Adding gives $(n+2)2^{n+1}$, so the even case is established by induction.  The proof of the odd case is similar.
\end{proof}

\section{Counting Invertible Toeplitz Matrices}\label{S: number_invertible}

As an application of the counting arguments for nullity strings we will show that exactly half of the Toeplitz matrices $\mathcal{A}_n$ are invertible, see the corollary to Theorem \ref{T: invertible_formula}. As a further application we will count the number of $\mathcal{A}_n$'s of any specified rank, see the corollary to Theorem \ref{T: T_count}.

\begin{definition}\label{D: theta_eta}
For $n\in \mathbb{N}$ let $\theta(n)$ be the number of $\mathcal{A}_n$'s such that $\nu_{n-1} = 0 = \nu_n$.  Define $\eta(n)$ to be the number of $\mathcal{A}_n$'s such that $\nu_{n-1} = 1$ and $\nu_n = 0$.
\end{definition}

Note that if $n\in \mathbb{N}$ and $\mathcal{A}_n$ is invertible then $\mathcal{A}_n$ falls either among those invertible matrices counted by $\theta(n)$ or by $\eta(n)$, and therefore we have the following.

\begin{proposition}\label{P: num_invertible}
For $n\in \mathbb{N}$, the number of invertible $\mathcal{A}_n$'s is $\theta(n) + \eta(n)$.
\end{proposition}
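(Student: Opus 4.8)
The statement is essentially a bookkeeping observation, so the plan is to make the partition explicit. Fix $n \in \mathbb{N}$, so that $n \geq 1$ and in particular $\mathcal{A}_{n-1}$ is defined. Every Toeplitz matrix $\mathcal{A}_n$ as in (1.1) sits inside a unique sequence, and in particular determines $\mathcal{A}_{n-1}$ as its top-left (equivalently, bottom-right) $n \times n$ corner. Thus $\mathcal{A}_n$ determines the pair of nullities $(\nu_{n-1}, \nu_n)$.

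Now suppose $\mathcal{A}_n$ is invertible, i.e. $\nu_n = 0$. By Proposition \ref{P: null}, $\nu_{n-1}$ differs from $\nu_n$ by at most $1$, and since nullities are non-negative, $\nu_{n-1} \in \{0, 1\}$. These two cases are mutually exclusive, so the invertible $\mathcal{A}_n$'s are partitioned into those with $\nu_{n-1} = 0 = \nu_n$ and those with $\nu_{n-1} = 1$, $\nu_n = 0$. The first class is counted by $\theta(n)$ and the second by $\eta(n)$, by Definition \ref{D: theta_eta}. Conversely, any $\mathcal{A}_n$ counted by $\theta(n)$ or by $\eta(n)$ has $\nu_n = 0$, hence is invertible. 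Therefore the number of invertible $\mathcal{A}_n$'s equals $\theta(n) + \eta(n)$, as claimed.

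There is no real obstacle here; the only thing requiring care is that the two subcases are genuinely disjoint (they are, since $\nu_{n-1}$ cannot be both $0$ and $1$) and jointly exhaustive (they are, by the bound $\nu_{n-1} \le \nu_n + 1 = 1$ from Proposition \ref{P: null}). If one wished to be pedantic one could also note that when $n = 1$ the matrix $\mathcal{A}_0 = [a_0]$ is invertible precisely when $a_0 \neq 0$, and the same dichotomy on $\nu_0 = \nu_{n-1}$ applies verbatim.
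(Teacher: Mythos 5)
Your proof is correct and is essentially the paper's own argument: the paper justifies the proposition with the one-line observation preceding it, namely that every invertible $\mathcal{A}_n$ falls into exactly one of the two classes counted by $\theta(n)$ and $\eta(n)$, which is precisely the partition by $\nu_{n-1}\in\{0,1\}$ (via Proposition \ref{P: null}) that you spell out.
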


\begin{remark}\label{R: small_theta_eta}  The formula in the theorem below computes the number of invertible matrices $\mathcal{A}_n$ for $n\geq 2$.  For $n=0$ note that $\mathcal{A}_0 =[1]$ is the only invertible $1\times 1$ matrix.  For $n=1$ there are only two possible nullity strings $\nu_0,\nu_1$ ending in $0$, namely $0,0$ and $1,0$.  By Proposition \ref{P: null} and the corollary to Theorem \ref{T: ker_form} the there are three matrices with the former and one with the latter nullity string.  So $\theta(1) = 3$ and $\eta(1) = 1$, and there are $\theta(1) + \eta(1) = 4$ invertible $2\times 2$ Toeplitz matrices.
\end{remark}

\begin{proposition}\label{P: theta_recursion}
For $n\in \mathbb{N}$, $\theta(n+1) = 3\theta(n) + 2\eta(n)$.
\end{proposition}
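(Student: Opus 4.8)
The plan is to classify the matrices counted by $\theta(n+1)$ according to the value of the nullity $\nu_{n-1}$ two steps back, and then replace each class by a local extension count already established in Section 3; we work throughout over $GF(2)$, as in the rest of this part of the paper.

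First I would note that every $\mathcal{A}_{n+1}$ counted by $\theta(n+1)$ has $\nu_n = 0 = \nu_{n+1}$, so its top left $(n+1)\times(n+1)$ corner $\mathcal{A}_n$ is invertible and $\mathcal{A}_{n+1}$ is recovered from $\mathcal{A}_n$ together with the single extra pair of entries $\{b_{n+1},a_{n+1}\}$, distinct extensions of a fixed $\mathcal{A}_n$ corresponding to distinct pairs. Since $\mathcal{A}_{n-1}$ occupies the top left corner of $\mathcal{A}_n$, Proposition \ref{P: null} forces $\nu_{n-1}\in\{0,1\}$, so $\mathcal{A}_n$ is counted by exactly one of $\theta(n)$ (when $\nu_{n-1}=0$) or $\eta(n)$ (when $\nu_{n-1}=1$); conversely every matrix counted by $\theta(n)$ or $\eta(n)$ arises as such a corner. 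Hence $\theta(n+1)$ equals the number of pairs $(\mathcal{A}_n,\{b_{n+1},a_{n+1}\})$ with $\mathcal{A}_n$ counted by $\theta(n)$ and $\nu_{n+1}=0$, plus the analogous number with $\theta(n)$ replaced by $\eta(n)$.

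Next I would evaluate those two contributions using the Section 3 theorems, applied with the index shifted down by one. For a fixed $\mathcal{A}_n$ counted by $\theta(n)$, the embedded pair $\mathcal{A}_{n-1}\rightharpoonup\mathcal{A}_n$ has $\nu_{n-1}=0=\nu_n$, so by Theorem \ref{T: zero_zero} (equivalently, line (1) of Remark \ref{R: counting_summary}) the number of pairs $\{b_{n+1},a_{n+1}\}$ producing $\nu_{n+1}=0$ equals $3$; as this is independent of the chosen $\theta(n)$-matrix, the first contribution is $3\theta(n)$. For a fixed $\mathcal{A}_n$ counted by $\eta(n)$ we have $\nu_{n-1}=1$, $\nu_n=0$, so by Theorem \ref{T: 100_101} (equivalently, line (3) of Remark \ref{R: counting_summary}) the number of such pairs is $q^2-q=2$, again independent of the matrix, so the second contribution is $2\eta(n)$. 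Summing gives $\theta(n+1)=3\theta(n)+2\eta(n)$.

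I expect the only point requiring care — rather than a genuine obstacle — to be the shift of index: the counting theorems of Section 3 are stated as extensions of $\mathcal{A}_{n+1}$ to $\mathcal{A}_{n+2}$, so they must be invoked with $n$ replaced by $n-1$, and one should check that the resulting hypotheses ($\nu_{n-1}=\nu_n=0$ for Theorem \ref{T: zero_zero}; $\nu_{n-1}=1$ and $\nu_n=0$ for Theorem \ref{T: 100_101}) are precisely the defining conditions of $\theta(n)$ and $\eta(n)$ in Definition \ref{D: theta_eta}. One should also note in passing that $n\ge 1$ is enough for these applications (so that $\mathcal{A}_{n-1}\rightharpoonup\mathcal{A}_n$ makes sense, and $0\in\mathbb{Z}^+$ in the conventions of this paper), whence the recursion holds for all the relevant $n$.
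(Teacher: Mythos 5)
Your proof is correct and follows essentially the same route as the paper: partition the matrices counted by $\theta(n+1)$ according to whether $\nu_{n-1}=0$ or $\nu_{n-1}=1$, then apply Theorem \ref{T: zero_zero} (three extensions preserving nullity $0$ when $q=2$) and Theorem \ref{T: 100_101} (two such extensions) to each class. Your version just spells out the bookkeeping (the bijection between extensions and pairs $\{b_{n+1},a_{n+1}\}$, and the index shift) that the paper leaves implicit.
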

\begin{proof}
It follows from Theorem \ref{T: zero_zero} that if $\mathcal{A}_{n-1}$ and $\mathcal{A}_n$ both have nullity $0$ then there are three choices of pairs $\{b_{n+1},a_{n+1)}\}$ for $\mathcal{A}_{n+1}$ to have nullity $0$.  If $\nu_{n-1}=1$ and $\nu_n = 0$ there are two choices for $\{b_{n+1},a_{n+1)}\}$ that make $\mathcal{A}_{n+1}$ invertible, by Theorem \ref{T: 100_101}.
\end{proof}

\begin{theorem}\label{T: invertible_formula}
For $n\geq 2$ the number of invertible $(n+1)\times(n+1)$ Toeplitz matrices $\mathcal{A}_n$ over  $GF(2)$  is
\begin{multline}
\notag n2^{n-1}+(n-1)2^{n-2}\\
+\sum_{j=1}^{n-2}\left[ \theta(j)+2\eta(j)\right]((n-1)-j)2^{n-2-j}\\
+(3\theta(n-1)+2\eta(n-1))
\end{multline}

\end{theorem}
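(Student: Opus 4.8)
The plan is to count invertible matrices $\mathcal{A}_n$ by classifying them according to the structure of the nullity sequence $\nu_0,\nu_1,\dots,\nu_n$, using Proposition~\ref{P: num_invertible} to split the count as $\theta(n)+\eta(n)$, and then decomposing each of these two pieces according to the last time before step $n$ that the nullity was $0$. Concretely, for a sequence ending in $\nu_n=0$, let $\ell$ be the largest index with $\ell<n$ and $\nu_\ell=0$; between $\ell$ and $n$ all nullities are strictly positive, so the block $\nu_\ell,\nu_{\ell+1},\dots,\nu_n$ is one of the ``bump'' strings $0,1,2,\dots,m,\dots,m,m-1,\dots,1,0$ classified in Theorem~\ref{T: null_string}. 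First I would handle the two extreme cases: the sequence in which $\nu_s>0$ for \emph{all} $1\le s\le n-1$ (so the first $0$ after $\nu_0$ is at index $n$ itself, or $\nu_0=0$ and we never return to $0$ until $n$), and then the generic case where such an $\ell$ with $1\le \ell \le n-1$ exists.

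The main engine is Theorem~\ref{positive_string}: once a matrix $\mathcal{A}_\ell$ of nullity $1$ is fixed (with $\nu_{\ell-1}=0$), the number of continuations $\mathcal{A}_\ell\rightharpoonup\cdots\rightharpoonup\mathcal{A}_{\ell+k}$ with all intermediate nullities positive and $\nu_{\ell+k}=0$ is $k\,2^{k-1}$. I would combine this with the transition counts from Remark~\ref{R: counting_summary}: from a pair $\mathcal{A}_{n-1}\rightharpoonup\mathcal{A}_n$ with nullities $0,0$ there is one extension to nullity $1$ and $q^2-q+1=3$ to nullity $0$; from nullities $0,0$ the ``new bump'' starts with the unique extension to $\nu=1$, so a bump of length $k+1$ (from a $0,0$-type predecessor) contributes a factor $1\cdot k2^{k-1}$, whereas from a $1,0$-type predecessor (counted by $\eta$) there are two extensions into a nullity-$1$ state, giving $2\cdot k2^{k-1}$. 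Summing over all positions $\ell$ where the last bump can begin, the number of sequences of the second kind is
\[
\sum_{j=1}^{n-2}\bigl[\theta(j)+2\eta(j)\bigr]\bigl((n-1)-j\bigr)2^{\,n-2-j},
\]
where $j$ indexes the matrix $\mathcal{A}_{j-1}$ (of nullity $0$) just before the last return to $0$ at step $j$, the factor $\theta(j)+2\eta(j)$ counts the ways to reach and leave that state, and $((n-1)-j)2^{(n-1)-j-1}$ is Theorem~\ref{positive_string} applied with $k=n-j$ but accounting for the one forced step into $\nu=1$. The boundary contributions are $n2^{n-1}$ (sequences positive on $1,\dots,n-1$, starting from the unique invertible $\mathcal{A}_0=[1]$, again by Theorem~\ref{positive_string}) and $(n-1)2^{n-2}$ (the analogous count shifted by one, corresponding to $\nu_0=0$ followed immediately by a bump), while the tail term $3\theta(n-1)+2\eta(n-1)$ absorbs the sequences whose last nonzero block is trivial, i.e. $\nu_{n-1}=\nu_n=0$ or $\nu_{n-1}=1,\nu_n=0$ directly — this is exactly $\theta(n)+\eta(n)-(\text{everything already counted})$ by Propositions~\ref{P: theta_recursion} and the definition of $\eta$.

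The step I expect to be the main obstacle is getting the index bookkeeping exactly right: pinning down which matrix index the summation variable $j$ refers to, verifying that the exponent of $2$ and the linear factor $(n-1-j)$ in the summand match Theorem~\ref{positive_string} after the shift caused by the forced transition into nullity $1$, and confirming that the three non-summation terms partition the remaining cases without overlap or omission. I would verify this by checking the formula against the small cases $n=2$ and $n=3$ against a direct enumeration (using Remark~\ref{R: small_theta_eta}, $\theta(1)=3$, $\eta(1)=1$, and the known answer $2^{k^2}/2$ for the total number of invertible $k\times k$ Toeplitz matrices over $GF(2)$), which will also serve as the base case of any induction hiding in the derivation of $\theta(j)$ and $\eta(j)$.
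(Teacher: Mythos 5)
Your proposal follows essentially the same route as the paper's proof: partition the invertible $\mathcal{A}_n$ according to the largest index $\ell<n$ with $\nu_\ell=0$ (or the absence of such an index), count the ways to arrive at that zero via $\theta$ and $\eta$, the ways to step into nullity $1$ via the transition counts of Remark \ref{R: counting_summary}, and the positive ``bump'' returning to $0$ at step $n$ via Theorem \ref{positive_string}. One small correction to your bookkeeping: the final term $3\theta(n-1)+2\eta(n-1)$ counts only the matrices with $\nu_{n-1}=\nu_n=0$ (it equals $\theta(n)$ by Proposition \ref{P: theta_recursion}), while the strings with $\nu_{n-1}=1$, $\nu_n=0$ are exactly those covered by the first three terms (their total is $\eta(n)$), so folding the latter into the tail term as your parenthetical suggests would double-count.
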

\begin{proof}
Each invertible Toeplitz matrix $\mathcal{A}_n$ has a nullity string $\nu_0,...,\nu_{n}$ which satisfies $\nu_n=0$ and the rules of Theorem \ref{T: null_string}.
We divide the collection of all of these invertible matrices into disjoint sets $S, S_0,...,S_{n-1}$, depending on the structure of their nullity string.  A matrix $\mathcal{A}_n$ belongs to $S$ iff $\nu_k >0$ for all $k<n$, i.e., $\nu_n$ is the only $0$ on the string.  $\mathcal{A}_n$ is in $S_j$ if $\nu_j=0$ and $\nu_k > 0$ for $j<k<n-1$.

By Theorem \ref{positive_string}  there are $n2^{n-1}$ matrices $\mathcal{A}_n$ in $S$.  This is the first term of the formula in the statement of the theorem.  The set $S_0$ consists of all matrices whose nullity string is $0,\nu_1,...\nu_{n-1},\nu_n$, where $\nu_1,\nu_2,...,\nu_{n-1}$ has no zeroes and $\nu_n = 0$.  Then $\nu_1 = 1$, by Proposition \ref{P: null}, and there is only one choice for $\mathcal{A}_1$, namely
$\left[ \begin{smallmatrix} 1 & 1 \\ 1 & 1 \end{smallmatrix}\right]$,
 so by Theorem \ref{positive_string} there are $(n-1)2^{n-2}$ matrices in $S_0$.  This is the second term of the formula in the statement of the proposition.  If a matrix is in $S_1$ its nullity sequence is either $0,0,\nu_2,...\nu_n$ or $1,0,\nu_2,...\nu_n$, with $\nu_n =0$ and $\nu_2$ through $\nu_{n-1}$ non-zero.  There are $\theta(1)$ strings $\nu_0 = 0, \nu_1 = 0$ so by Remark 2.2 and Theorem \ref{positive_string} there are $\theta(1)(n-2)2^{n-3}$ strings in $S_1$ beginning with $\nu_0 = 0$ and $\nu_1 = 0$.  There are $\eta(1)$ strings $\nu_0 =1, \nu_1 = 0$, so by Remark 2.4 and Theorem \ref{positive_string}, there are $2\eta(1)(n-2)2^{n-3}$ strings in $S_1$ beginning with $\nu_0 = 1$ and  $\nu_1 = 0$.  This give the first term in the summation.
The other terms in the summation are obtained by a similar analysis of the properties of nullity sequences for the matrices in sets $S_2$ through $S_{n-2}$.

Now consider the set $S_{n-1}$, then the triple $\nu_{n-2},\nu_{n-1},\nu_n$ is either $0,0,0$ or $1,0,0$.  By Remarks 2.1 and 2.3, the number of matrices in $S_{n-1}$ is $3\theta(n-1)+2\eta(n-1)$.
\end{proof}
\begin{theorem}\label{T: count invertible}
The number of invertible Toeplitz matrices $\mathcal{A}_n$ is $2^{2n}$, half of the total number of $(n+1)\times (n+1)$ Toeplitz matrices.
\end{theorem}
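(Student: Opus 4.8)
The plan is to prove the equivalent statement that the number $I(n)$ of invertible $(n+1)\times(n+1)$ Toeplitz matrices over $GF(2)$ equals $2^{2n}$. Since such a matrix is determined by the $2n+1$ entries $a_0,\dots,a_n,b_1,\dots,b_n$, there are $2^{2n+1}$ of them in all, so $2^{2n}$ is indeed exactly half. By Proposition \ref{P: num_invertible}, $I(n)=\theta(n)+\eta(n)$, so it suffices to determine the pair $(\theta(n),\eta(n))$.

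The argument is by strong induction on $n$, carrying the hypothesis that for all $1\le j\le n$ one has $\theta(j)=\tfrac{2^{2j+1}+1}{3}$ and $\eta(j)=\tfrac{2^{2j}-1}{3}$ (hence $I(j)=2^{2j}$). The base cases are immediate: $\mathcal A_0=[1]$ is the only invertible $1\times1$ matrix so $I(0)=1$, and Remark \ref{R: small_theta_eta} gives $\theta(1)=3,\ \eta(1)=1$ (one may also check $n=2$ directly against Theorem \ref{T: invertible_formula} as a sanity test). For the inductive step $n\to n+1$ (valid for $n\ge1$, since Theorem \ref{T: invertible_formula} applies at index $n+1\ge2$), Proposition \ref{P: theta_recursion} gives $\theta(n+1)=3\theta(n)+2\eta(n)$, and substituting the inductive formulas for $\theta(n),\eta(n)$ yields $\theta(n+1)=\tfrac{2^{2n+3}+1}{3}$ after a routine simplification. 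It then remains to determine $\eta(n+1)$, which I would extract from $I(n+1)=\theta(n+1)+\eta(n+1)$ by evaluating the closed formula of Theorem \ref{T: invertible_formula} at index $n+1$.

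Concretely, in that formula the final summand $3\theta(n)+2\eta(n)$ equals $\theta(n+1)$, and each bracket $[\theta(j)+2\eta(j)]$ with $1\le j\le n-1$ equals $\tfrac{2^{2j+2}-1}{3}$ by the inductive hypothesis. Substituting these, the formula for $I(n+1)$ reduces to $(n+1)2^{n}+n2^{n-1}+\tfrac13\sum_{j=1}^{n-1}(2^{2j+2}-1)(n-j)2^{n-1-j}+\tfrac{2^{2n+3}+1}{3}$. Splitting the $2^{2j+2}$ and $-1$ parts and reindexing turns the two remaining sums into the standard evaluations $\sum_{i=1}^{m} i\,2^{-i}=2-(m+2)2^{-m}$ and $\sum_{k=0}^{m}(k+1)2^{k}=m2^{m+1}+1$; the $2^{2j+2}$ part contributes a main term $2^{2n+2}$ together with a term of the form $n\cdot2^{n}$, and the latter together with the $-1$ part exactly cancels $(n+1)2^{n}+n2^{n-1}$. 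This leaves $I(n+1)=2^{2(n+1)}$, whence $\eta(n+1)=2^{2n+2}-\theta(n+1)=\tfrac{2^{2n+2}-1}{3}$, closing the induction; the theorem is the resulting statement $I(n)=2^{2n}$.

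The main obstacle is purely the bookkeeping in this last step: the correct reindexing of $\sum_{j=1}^{n-1}(2^{2j+2}-1)(n-j)2^{n-1-j}$ into the two closed-form sums above, and verifying that the resulting $n\cdot2^{n}$-order terms cancel $(n+1)2^{n}+n2^{n-1}$ with nothing left over but $2^{2n+2}$. A slicker alternative would be to prove the companion recursion $\eta(n+1)=\theta(n)+2\eta(n)$ directly (equivalently: the number of $\mathcal A_{n+1}$ whose nullity pattern ends $\dots,1,0$ equals the number ending $\dots,0,1$), since adding it to Proposition \ref{P: theta_recursion} yields $I(n+1)=4\,I(n)$ at once and then $I(n)=4^{n}I(0)=2^{2n}$; but establishing that identity appears to require the same case split on the value of $\nu_{n-1}$ that is already packaged into Theorem \ref{T: invertible_formula} and the entries of Remark \ref{R: counting_summary}, so the formula-evaluation route is the more economical one to write down.
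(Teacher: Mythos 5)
Your proposal is correct and follows essentially the same route as the paper: induction on $n$ carrying the closed forms $\theta(j)=\tfrac{2^{2j+1}+1}{3}$, $\eta(j)=\tfrac{2^{2j}-1}{3}$, using Proposition \ref{P: theta_recursion} to advance $\theta$ and the formula of Theorem \ref{T: invertible_formula} to pin down the total count (and hence $\eta$). The only difference is cosmetic: you evaluate the formula at index $n+1$ directly via closed-form geometric sums, whereas the paper computes the difference of the formula at $k+1$ minus twice its value at $k$; both reduce to the same bookkeeping and your cancellation indeed leaves exactly $2^{2n+2}$.
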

\begin{proof}
The remark above shows that the conclusion holds for $n=0$ and $n=1$.  For $n\geq 2$ we will prove the result using the formula in the previous theorem, along with induction on $n\in \mathbb{N}$ on the following formulas.

\begin{align}
\theta(n) &= \frac{2^{2n+1}+1}{3}\tag{14.1} \\
\eta(n) &= \frac{2^{2n}-1}{3}\tag{14.2}
\end{align}

The two formulas are valid for $n = 1$ from the remark above.  For $n=2$ we see that $\theta(2)$ is the count of all matrices with initial nullity string $0,0,0$ or $1,0,0$.  Applying Theorem \ref{T: zero_zero} there are nine of the former and two of the latter.  For $\eta(2)$ the relevant nullity strings are $0,1,0$ and $1,1,0$ for $\nu_0,\nu_1,\nu_2$.  There is one of the former and there are four of the latter.

It is possible to show that when $n=2$ the formula in the theorem gives the answer $16$.  So now we assume that for some positive integer $k \geq 2$ the formulas (14.1) and (14.2) are valid, and that the formula in the theorem gives $2^{2k}$ for $n=k$, i.e.
\begin{multline}
k2^{k-1}+(k-1)2^{k-2}\\
+\sum_{j=1}^{k-2}\left[ \theta(j)+2\eta(j)\right]((k-1)-j)2^{k-2-j}\\
+(3\theta(k-1)+2\eta(k-1)) = 2^{2k} \tag{14.3}
\end{multline}

We need to show that (14.3) is valid with $k$ replaced by $k+1$, i.e. that
\begin{equation}
(k+1)2^{k}+k2^{k-1}+\sum_{j=1}^{k-1}\left[ \theta(j)+2\eta(j)\right]((k-j)2^{k-1-j})+(3\theta(k)+2\eta(k)) = 2^{2k+2} \tag{14.4}
\end{equation}
To show this it will suffice to show that if we subtract two times the left side of (14.3)  from the left side of (14.4) we obtain $2^{2k+1}$.  Taking this difference term by term gives
\begin{eqnarray*}
& &2^k+2^{k-1}+\sum_{j=1}^{k-2} \left(\left[ \theta(j)+\eta(j)\right]2^{k-1-j}\right) -5\theta(k-1)-2\eta(k-1) +3\theta(k)+2\eta(k)\\
&=& 2^k + 2^{k-1} +\sum_{j=1}^{k-2}\left( \frac{2^{2j+1}+1}{3} + 2\frac{2^{2j}-1}{3}\right)2^{k-1-j} \\
& & \quad \quad \quad \quad \quad \quad \quad -5\frac{2^{2k-1}+1}{3}-2\frac{2^{2k-2}-1}{3}+3\frac{2^{2k+1}+1}{3} + 2\frac{2^{2k}-1}{3}\\
&=& 2^k + 2^{k-1} +\sum_{j=1}^{k-2}\left( \frac{2^{2j+2}-1}{3}\right)2^{k-1-j} \\
& & \quad \quad \quad \quad \quad \quad \quad -5\frac{2^{2k-1}+1}{3}-2\frac{2^{2k-2}-1}{3}+3\frac{2^{2k+1}+1}{3} + 2\frac{2^{2k}-1}{3}\\
&=& 2^k + 2^{k-1} +\sum_{j=1}^{k-2}\left( \frac{2^{k+j+1}-2^{k-1-j}}{3}\right)\\
& & \quad \quad \quad \quad \quad \quad \quad -5\frac{2^{2k-1}+1}{3}-2\frac{2^{2k-2}-1}{3}+3\frac{2^{2k+1}+1}{3} + 2\frac{2^{2k}-1}{3}\\
&=& 2^k + 2^{k-1} +\left( \frac{2^{k+2}+2^{k+3}+\cdots + 2^{2k-1}}{3}- \frac{2^{k-2}+2^{k-3}+\cdots + 2}{3}\right)\\
& & \quad \quad \quad \quad \quad \quad \quad -5\frac{2^{2k-1}+1}{3}-2\frac{2^{2k-2}-1}{3}+3\frac{2^{2k+1}+1}{3} + 2\frac{2^{2k}-1}{3}\\
&=& 2^k + 2^{k-1}+ \left(\frac{2^{2k}-2^{k+2}}{3}\right)- \left(\frac{2^{k-1}-2}{3}\right) -5\frac{2^{2k-1}+1}{3}-2\frac{2^{2k-2}-1}{3}+ \\ & & \quad \quad \quad \quad \quad 3\frac{2^{2k+1}+1}{3} + 2\frac{2^{2k}-1}{3},
\end{eqnarray*}
and a straightforward computation shows that this expression is $2^{2k+1}$.

To finish the induction we need to verify the formulas for $\theta(k+1)$ and $\eta(k+1)$.  Since $\theta(k) + \eta(k) = 2^{2k}$ we have
\begin{eqnarray*}
\theta(k+1) &=& 3\theta(k) + 2\eta(k) \\
  &=& 2(\theta(k) + \eta(k)) + \theta(k) \\
  &=& 2^{2k+1} + \left( \frac{2^{2k+1}+1}{3}\right)\\
  &=& \frac{3\cdot2^{2k+1}+2^{2k+1}+1}{3}\\
  &=& \frac{2^{2k+3}+1}{3},
\end{eqnarray*}
and so
\[\eta(k+1) = 2^{2k+2} - \theta(k+1) = 2^{2k+2}-\left(\frac{2^{2k+3}+1}{3}\right)=\frac{2^{2k+2}-1}{3},\]
to finish the proof.

\end{proof}

\section*{Counting matrices of positive nullity}\label{S: Null_1}


In this section we show that the theorem above can be applied to determine the number $N(n,\nu)$ of Toeplitz matrices $\mathcal{A}_n$ with nullity $\nu$, for any $n\in \mathbb{Z}^+$. The first step is the following corollary to the theorem.

\begin{corollary}
Consider the set of all $(n+1)\times(n+1)$ Toeplitz matrices $\mathcal{A}_n$ whose nullity string $\nu_0,\nu_1,\dots,\nu_n$ is non-zero and satisfies $\nu_n =1$.  There is only one such matrix for $n=0$, and for $n\geq 1$ there are $(n+3)2^{n-2}$ such matrices.
\end{corollary}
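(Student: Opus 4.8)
The statement concerns Toeplitz matrices $\mathcal{A}_n$ whose nullity string $\nu_0,\dots,\nu_n$ is everywhere positive (no zeros) and ends with $\nu_n = 1$. I would count these by a direct analogue of the argument in Theorem \ref{positive_string} (the "positive string ending in $0$" count $n2^{n-1}$), but now conditioning on the string terminating at $1$ rather than $0$. The $n=0$ case is immediate: the only $1\times 1$ Toeplitz matrix of nullity $1$ is $[0]$. For $n\geq 1$, the key structural input is Theorem \ref{T: null_string}: a positive nullity string ending in $1$ must, reading from the start, rise $1,2,\dots$ up to some maximum $m$, possibly plateau at $m$ for a while, and then descend $m,m-1,\dots$ back down to $1$, with no zero in between — i.e. it is an "up/plateau/down" string truncated just before the terminal $0$ would appear. (The degenerate case $m=1$ gives the all-ones string $1,1,\dots,1$.)

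**Main steps.** First I would set up the count by the position and height of the peak: let $m$ be the maximum nullity attained and $u \geq 1$ the number of times $m$ is repeated. A string of the form $1,2,\dots,m-1,m,\dots,m,m-1,\dots,2,1$ (with $m$ appearing $u$ times) has length $2(m-1)+u$, so $n = 2(m-1)+u-1 = 2m+u-3$, i.e. $u = n - 2m + 3$. Second, I would count the matrices realizing each such string, reusing the per-step counts from Remark \ref{R: counting_summary}: each "up" step $d-1 \to d$ contributes a unique matrix (factor $1$); each plateau step $m \to m$ after the first contributes $2$ choices (Theorem \ref{T: null_same_count} over $GF(2)$ gives $q = 2$); the step $m \to m-1$ off the plateau contributes $2$ choices (Corollary to Theorem \ref{T: null_up} with $q=2$); and each subsequent "down" step $d \to d-1$ for $d \leq m-1$ contributes $4$ choices (Theorem \ref{T: null_down}). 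This mirrors Lemma \ref{L: plateau_count}, except we stop one step short of reaching $0$, so we drop its final factor; careful bookkeeping of exactly which descending steps survive is the one place to be attentive. Third, I would also handle the $m=1$ case separately — the all-ones string of length $n+1$, which by Lemma \ref{L: plateau_count} (or directly Theorem \ref{T: null_same_count}) contributes $2^{n-1}$ matrices for $n\geq 1$ — and then sum over all valid $(m,u)$, equivalently over $m$ with $1 \leq m \leq \lceil (n+2)/2\rceil$ or so, subject to $u = n-2m+3 \geq 1$.

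**The computation and the obstacle.** Summing the contributions gives, for the $m\geq 2$ part, a sum of the shape $\sum_{m\geq 2} 2^{u-1}\cdot 2 \cdot 4^{m-2}$ with $u = n-2m+3$, i.e. $\sum 2^{n-2m+2}\cdot 2^{2m-4} = \sum 2^{n-2}$ — a constant in $m$! — times the number of valid $m$, plus the $m=1$ term $2^{n-1}$. Counting the valid $m$-values and assembling everything should collapse to $(n+3)2^{n-2}$; this is the routine arithmetic I would not grind through here but would present as a short closing calculation, and it is also the step most prone to off-by-one error (boundary cases $u=1$, the parity of $n$, and whether the peak-descent step is correctly weighted). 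The genuine mathematical content — that every positive string ending in $1$ has exactly the up/plateau/down shape and that the per-step multiplicities are as listed — is entirely supplied by Theorem \ref{T: null_string} and Remark \ref{R: counting_summary}, so the main obstacle is purely organizational: correctly enumerating the strings (peak height and plateau length) and matching each transition to the right line of Remark \ref{R: counting_summary} so that the geometric sums telescope to the claimed closed form. I would double-check against small cases ($n=1$: should give $4\cdot 2^{-1}=2$, namely $[\begin{smallmatrix}0&0\\0&0\end{smallmatrix}]$ and $[\begin{smallmatrix}0&1\\1&0\end{smallmatrix}]$... actually nullity-$1$ $2\times2$ matrices over $GF(2)$ with $\nu_0,\nu_1$ both positive, so $\nu_0=1$ forces $a_0=0$; checking these two against $(n+3)2^{n-2}=4\cdot\tfrac12=2$) and $n=2$ ($(n+3)2^{n-2}=5$) before finalizing.
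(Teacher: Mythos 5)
Your overall strategy --- classify the positive strings ending in $1$ by peak height $m$ and plateau length $u$, weight each string by the product of the per-step multiplicities from Remark \ref{R: counting_summary}, and sum --- is sound and genuinely different from the paper's proof. The paper instead argues backwards from Theorem \ref{positive_string}: every positive string of length $n+2$ ending in $0$ is obtained by appending a $0$ to a positive string of length $n+1$ ending in $1$, with multiplicity $2$ if that string is $1,1,\dots,1$ (which accounts for $2^n$ matrices $\mathcal{A}_n$) and multiplicity $4$ otherwise (rule $d,d-1\overset{q^2}{\to}d-2$); writing $R$ for the number of matrices of the second kind, the identity $(n+1)2^n = 2\cdot 2^n + 4R$ gives $R=(n-1)2^{n-2}$ and hence the total $R+2^n=(n+3)2^{n-2}$ with no case analysis on the shape of the string.

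However, as written your computation contains two concrete errors in exactly the boundary cases you flagged, and they do not cancel, so the sum you set up does \emph{not} collapse to $(n+3)2^{n-2}$. First, the all-ones string $\nu_0=\cdots=\nu_n=1$ accounts for $2^n$ matrices, not $2^{n-1}$: starting from the unique $\mathcal{A}_0=[0]$ there are $n$ subsequent steps, each with $q=2$ choices (Lemma \ref{L: plateau_count} with $u=n+1$ ones gives $2^{(n+1)-1}=2^n$); your own $n=1$ sanity check (two matrices) already contradicts $2^{n-1}=1$. Second, when $u=1$ the drop off the peak is governed by the transition $d-1,d\to d-1$, which over $GF(2)$ has $(q-1)^2=1$ choice (item (7) of Remark \ref{R: counting_summary}), not by the $d,d\to d-1$ rule with $2$ choices that applies only when $u\geq 2$; so a single-peak string contributes $4^{m-2}$, not $2\cdot 4^{m-2}$. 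With the corrected weights ($2^n$ for $m=1$; $4^{m-2}$ for $m\geq 2$, $u=1$; $2^{u}4^{m-2}=2^{n-1}$ for $m\geq 2$, $u\geq 2$) the sum does telescope to $(n+3)2^{n-2}$ in both parities of $n$; with your stated weights it gives $(n+2)2^{n-2}$ for $n$ even and $(n+1)2^{n-2}$ for $n$ odd. Since the deferred ``routine arithmetic'' is precisely where the proof fails to close, this is a genuine gap, though one that is repaired by fixing the two multiplicities above.
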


\begin{proof}
For $n=0$, only $\mathcal{A}_0 = [0]$ has nullity $1$.
For $n=1$ the only nullity string that satisfies the conditions is $1,1$, and there are two matrices $\mathcal{A}_1$ with this string, namely, $\left[ \begin{smallmatrix} 0 & 1 \\ 0 & 0 \end{smallmatrix}\right]$ and $\left[ \begin{smallmatrix} 0 & 0 \\ 1 & 0 \end{smallmatrix}\right]$ .  So the formula holds for $n=1$.

Suppose $n\geq 2$.  By Theorem \ref{positive_string} there are $(n+1)2^n$ matrices $\mathcal{A}_{n+1}$ whose nullity string $\nu_0,\nu_1,\dots,\nu_{n+1}$ satisfies the hypotheses of that theorem.  Note that every string of this form is obtained by appending a $0$ to the strings $\nu_0,\nu_1,\dots,\nu_n$ satisfying the hypotheses of the corollary.  One of these is $1,1,\dots,1$ which, by the counting rules, is the string for $2^n$ Toeplitz matrices $\mathcal{A}_n$ so that again by the counting rules $1,1,\dots,1,0$ is the nullity string for $2^{n+1}$ matrices $\mathcal{A}_n$.  Now consider the other nullity strings satisfying the hypotheses of the corollary.  Let $R$ be the number of matrices $\mathcal{A}_n$ with this nullity string.  Each of these ends in $2,1$ so by the counting rules, if we append a $0$ to the end there are four times as many matrices $\mathcal{A}_{n+1}$ with the latter string as there are $\mathcal{A}_n$'s with the former.
So we have $(n+1)2^n = 2^{n+1} + 4R$, so $R = (n-1)2^{n-2}$ and the number of Toeplitz matrices satisfying the hypotheses of the corollary is $R + 2^n = (n+3)2^{n-2}.$

\end{proof}
\begin{theorem}\label{T: nonzero_string_end_s}
For non-negative integers $n$ and for $s\leq n+1$ the number of Toeplitz matrices $\mathcal{A}_n$ with a positive nullity string $\nu_0,\nu_1,\nu_2,\dots,\nu_n$ ending in $s$ agrees with the number of Toeplitz matrices $\mathcal{A}_{n+1}$ with a positive nullity string ending in $s+1$.
\end{theorem}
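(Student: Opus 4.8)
The plan is to deduce the identity from a weight-preserving bijection between the two sets of nullity strings, after first isolating the principle that makes such a bijection possible: the number of Toeplitz matrices realizing a prescribed positive nullity string depends only on the ``shape'' of that string, not on the particular nullity values it passes through.

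First I would record, as a consequence of the counting facts collected in Remark \ref{R: counting_summary} with $q = 2$ --- namely the corollary to Theorem \ref{T: ker_form}, Theorem \ref{T: null_up} together with its corollary, Theorem \ref{T: null_down}, and Theorem \ref{T: null_same_count}, together with Theorem \ref{T: equal equal down} --- the following. If $\mathcal{A}_0 \rightharpoonup \cdots \rightharpoonup \mathcal{A}_k$ is an embedded string of Toeplitz matrices with positive nullities $\nu_0, \dots, \nu_k$, then the number of extensions $\mathcal{A}_{k+1} \leftharpoonup \mathcal{A}_k$ realizing a prescribed positive value $\nu_{k+1}$ depends only on whether $\nu_{k-1} \to \nu_k$ was an increase, a repeat, or a decrease, and on whether $\nu_k \to \nu_{k+1}$ is an increase, a repeat, or a decrease --- never on the nullity levels themselves. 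At the initial index one checks directly that $\nu_0 = 1$ forces $\mathcal{A}_0 = [0]$, and that treating the step into $\nu_0$ as an increase is consistent with these transition counts. Iterating, the number of $\mathcal{A}_n$ realizing a fixed realizable positive nullity string $\nu = (\nu_0, \dots, \nu_n)$ is a product $\prod_{k=1}^{n} w(\mu_{k-1}, \mu_k)$ of weights depending only on the sequence of moves $\mu_k \in \{\uparrow, =, \downarrow\}$ of the string (with $\mu_0 := \uparrow$); in particular $w(\uparrow, \uparrow) = 1$, since once a string is increasing there is a unique way to prolong the increase, by the corollary to Theorem \ref{T: ker_form}.

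Next I would exhibit the bijection. By Theorem \ref{T: null_string} and Theorem \ref{T: equal equal down}, a positive nullity string $\nu$ ending in $s$ --- necessarily $\nu_0 = 1$ and $s \ge 1$ --- consists of a strictly increasing run $1, 2, \dots, d$, followed by a possibly empty plateau at $d$, followed by a possibly empty strictly decreasing run, terminating at $s$. Set $\Phi(\nu) = (1, 2, 1 + \nu_1, \dots, 1 + \nu_n)$, a string of length $n + 2$ ending in $s + 1$; it is again a realizable positive string, with increasing run to $d + 1$, the same plateau shifted up, and the same decreasing run shifted up. Conversely, any realizable positive string $\nu'$ of length $n + 2$ ending in $s + 1 \ge 2$ has peak at least $2$, hence (since a positive string cannot increase right after a repeat or a decrease) begins $1, 2, \dots$; deleting its leading entry and subtracting $1$ from each remaining entry recovers a realizable positive string of length $n + 1$ ending in $s$, which inverts $\Phi$. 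Thus $\Phi$ is a bijection from the positive strings of length $n+1$ ending in $s$ to those of length $n + 2$ ending in $s + 1$.

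Finally I would compare weights. The augmented move sequence $(\mu_0, \mu_1, \dots, \mu_n) = (\uparrow, \mu_1, \dots, \mu_n)$ of $\nu$ becomes $(\uparrow, \uparrow, \mu_1, \dots, \mu_n)$ for $\Phi(\nu)$, because $\Phi$ inserts one new increasing step at the front and then relabels values without changing any move. Hence the weight product for $\Phi(\nu)$ equals $w(\uparrow, \uparrow)$ times the weight product for $\nu$, and so equals the weight product for $\nu$ since $w(\uparrow, \uparrow) = 1$. Therefore the number of $\mathcal{A}_n$ realizing $\nu$ equals the number of $\mathcal{A}_{n+1}$ realizing $\Phi(\nu)$, and summing over all positive strings $\nu$ of length $n+1$ ending in $s$ yields the theorem. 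I expect the only genuinely delicate part to be the first step: one must run through each cited transition result and verify that the number of valid extensions is independent of the nullity level (and that the boundary index $k = 0$ fits the same pattern), so that assigning a single weight to each pair of consecutive moves is legitimate; given that, the bijection and the weight comparison are short. A more computational alternative would be to derive a closed form for the number of $\mathcal{A}_n$ with a positive string ending in $s$ and note that it depends only on $n - s$, but this is messier and less transparent.
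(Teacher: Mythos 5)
Your proof is correct and takes essentially the same route as the paper: your map $\Phi$ (prepend a $1$ and shift every entry up by one) is literally the paper's bijection, described there as inserting $j+1$ after the initial increasing run $1,2,\dots,j$. The only difference is that you spell out, via the level-independence of the transition counts in Remark \ref{R: counting_summary} and the fact that the extra $(\uparrow,\uparrow)$ step has weight $1$, the weight-preservation claim that the paper merely asserts ``can be verified using the counting rules.''
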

\begin{proof}
We exhibit a one to one correspondence between non-zero nullity strings $\nu_0,\dots,\nu_n$ of length $n+1$ ending in $s$ and nonzero nullity strings of length $n+2$ ending in $s+1$.  First suppose $\nu_0,\dots,\nu_n$ is $1,1,\dots,1$, then we match this with the string $1,2,\dots,2$ of length $n+2$.  Otherwise, suppose $\nu_0,\nu_1,\dots,\nu_n$ is some other non-zero nullity string and $\nu_n = s$.  Then there is a $j>1$ such that $j = max(\nu_0,\nu_1,\dots,\nu_n)$ and $\nu_0,\nu_1,\dots,\nu_j-1$ is $1,2,\dots,j$.  Consider the nullity string of length $n+2$ given by $1,2,\dots,j,j+1,\nu_j+1,\nu_{j+1}+1,\dots,\nu_n+1$.  It is not difficult that these matches exhibit a one to one correspondence between the non-zero nullity strings of length $n+1$ ending in $s$ and the non-zero nullity strings of length $n+2$ ending in $s+1$.  Moreover it can also be verified using the counting rules that there are the same number of Toeplitz matrices $\mathcal{A}_n$ corresponding to the string of length $n+1$ as there are matrices $\mathcal{A}_{n+1}$ corresponding to the matched string.  This establishes the proof.
\end{proof}

\begin{theorem}\label{T: null_1}
For $n\in \mathbb{N}$ the number $N(n,1)$ of Toeplitz matrices $\mathcal{A}_n$ with nullity $1$ is $2^{2n-2}+2^{2n-1}$.
\end{theorem}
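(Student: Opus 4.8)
The plan is to partition the $(n+1)\times(n+1)$ Toeplitz matrices $\mathcal{A}_n$ with $\nu_n = 1$ (I will assume $n \geq 1$) into two families, according to whether their nullity string $\nu_0,\nu_1,\dots,\nu_n$ is everywhere positive or not. If $\nu_k > 0$ for all $k \le n$, the string is a positive string ending in $1$, and by the Corollary to Theorem~\ref{positive_string} there are exactly $(n+3)2^{n-2}$ such matrices. Otherwise let $j$ be the largest index with $0 \le j < n$ and $\nu_j = 0$; then $\mathcal{A}_j$ is invertible (as $\nu_j = 0$), and since $\mathcal{A}_j \rightharpoonup \mathcal{A}_{j+1}$, Proposition~\ref{P: null} together with the maximality of $j$ and the hypothesis $\nu_n = 1$ forces $\nu_{j+1} = 1$; moreover $\nu_{j+1},\dots,\nu_n$ is then a positive string ending in $1$. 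These two families are disjoint and exhaust the matrices in question, so $N(n,1)$ is the sum of their sizes.

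For the second family I would sum over $j = 0,1,\dots,n-1$. For fixed $j$, a matrix in this family is determined by three independent pieces of data: an invertible $\mathcal{A}_j$; a choice of the pair $\{b_{j+1},a_{j+1}\}$ for which $\nu_{j+1} = 1$; and a choice of the remaining entries $\{b_{j+2},a_{j+2}\},\dots,\{b_n,a_n\}$ producing a positive nullity excursion $\nu_{j+1},\dots,\nu_n$ that ends in $1$. By items (2) and (4) of Remark~\ref{R: counting_summary}, the number of pairs consisting of an invertible $\mathcal{A}_j$ together with an admissible $\{b_{j+1},a_{j+1}\}$ equals $\theta(j) + 2\eta(j)$ when $j \geq 1$ (and equals $1$ when $j = 0$, where $\mathcal{A}_0 = [1]$); using $\theta(j) + \eta(j) = 2^{2j}$ (Proposition~\ref{P: num_invertible} and Theorem~\ref{T: count invertible}) and the formula $\eta(j) = (2^{2j}-1)/3$ from the proof of that theorem, this count is $\tfrac13(2^{2j+2}-1)$ in every case, $j = 0$ included. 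Because every arrow-count in Remark~\ref{R: counting_summary} depends only on the pair of consecutive nullities, the number of ways to complete a positive excursion of $n-1-j$ further steps ending in $1$, starting from any matrix $\mathcal{A}_{j+1}$ with consecutive nullities $(\nu_j,\nu_{j+1}) = (0,1)$, coincides with the count furnished by the Corollary to Theorem~\ref{positive_string} with $n-1-j$ in place of $n$; write $g(m)$ for this, so $g(0) = 1$ and $g(m) = (m+3)2^{m-2}$ for $m \geq 1$. Hence the second family has size $\sum_{j=0}^{n-1}\tfrac13(2^{2j+2}-1)\,g(n-1-j)$.

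Combining, $N(n,1) = (n+3)2^{n-2} + \tfrac13\sum_{j=0}^{n-1}(2^{2j+2}-1)\,g(n-1-j)$. Re-indexing by $m = n-1-j$, the bracketed sum becomes $2^{2n}\sum_{m=0}^{n-1}4^{-m}g(m) - \sum_{m=0}^{n-1}g(m)$; both are elementary sums of the type $\sum_m m x^m$, and evaluating them in closed form shows the bracket equals $9\cdot 2^{2n-2} - 3(n+3)2^{n-2}$. Dividing by $3$ and adding the first summand $(n+3)2^{n-2}$ makes that term cancel, leaving $3\cdot 2^{2n-2} = 2^{2n-2} + 2^{2n-1}$, as claimed.

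The step I expect to be the real content is the justification that the excursion count is genuinely independent of which invertible $\mathcal{A}_j$ and which pair $\{b_{j+1},a_{j+1}\}$ were chosen — equivalently, that the recursion counting completing matrices is Markovian in the pair of consecutive nullities $(\nu_{s-1},\nu_s)$. This rests on the observation that each transition count in Remark~\ref{R: counting_summary} is a function of $(\nu_{s-1},\nu_s)$ alone, so that the number of excursions realizing a prescribed nullity sequence is the product of the relevant transition counts, and the total over admissible sequences depends only on the initial pair $(\nu_j,\nu_{j+1})=(0,1)$ and the number of steps; one also notes that $\mathcal{A}_0 = [0]$ behaves, for the first transition, exactly like a matrix of consecutive nullities $(0,1)$, which is why the Corollary's count is the same quantity $g(\cdot)$. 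The remaining work — the two finite sums and the final cancellation — is routine and can be cross-checked against $N(1,1) = 3$ and $N(2,1) = 12$.
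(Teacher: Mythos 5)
Your proposal is correct and takes essentially the same approach as the paper: the same partition according to the position $j$ of the last zero in the nullity string (or its absence), the same per-block counts $(n+3)2^{n-2}$ and $(\theta(j)+2\eta(j))\,g(n-1-j)$ with $\theta(j)+2\eta(j)=(2^{2j+2}-1)/3$, and the same implicit use of the fact that the transition counts depend only on the pair of consecutive nullities. The only difference is in the final arithmetic --- you evaluate the sum in closed form (your bracket $9\cdot 2^{2n-2}-3(n+3)2^{n-2}$ checks out), whereas the paper verifies the formula by induction via $N(n+1,1)-2N(n,1)$ --- which is a cosmetic rather than a substantive divergence.
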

\begin{proof}
For $n=1$ the possible nullity strings are $1,1$ and $0,1$.   By Theorem \ref{T: null_same_count} the number of Toeplitz matrices with nullity string $1,1$ is $2$.  For $n=1$ the number of Toeplitz matrices $\mathcal{A}_1$ with nullity string $0,1$ is $1$ (as $\mathcal{A}_0$ is $[1]$ and $\mathcal{A}_1$ must be $\left[ \begin{smallmatrix} 1 & 1 \\ 1 & 1 \end{smallmatrix}\right]$)
so the formula holds for $n= 1$, i.e. $N(1,1) = 3 = 2^{2\cdot 1-2}+2^{2\cdot 1 -1}$.

For $n=2$ we have nullity strings $1,1,1$ and $1,2,1$ with $5 = (2+3)2^{2-2}$ corresponding Toeplitz matrices $\mathcal{A}_2$, we have nullity string $0,1,1$ with $2$ corresponding Toeplitz matrices, and we have $1,0,1$ and $0,0,1$ with $5= \theta(1) +2\eta(1)$ corresponding matrices.  The total is then $N(2,1) = 12 = 2^2 + 2^3$ matrices.

Let $n\geq 3$.  From the Corollary above the number of Toeplitz matrices with nullity $1$ which satisfies $\nu_j >0$ for $j=0,1,\dots,n$ is $(n+3)2^{n-2}$.  Next we fix $j\in \{0,1,\dots,n-1\}$ and count the number of Toeplitz matrices with nullity $1$ for which $\nu_j=0$ and $\nu_k>0$ for $j<k\leq n$.  For the case $j=0$ then the nullity string $\nu_0,\nu_1,\dots,\nu_n$ satisfies $\nu_0 = 0, \nu_1 =1,\dots, \nu_n = 1$ and $\nu_k > 0$ for $1 \leq k \leq n$.  By the previous Corollary and the comment in the first paragraph of this proof, the number of Toeplitz matrices satisfying these conditions is $(n+2)2^{n-3}$.  Now suppose $1 \leq j \leq n-2$ then using Theorem \ref{T: 100_101} and Theorem \ref{T: zero_zero} the number of Toeplitz matrices with $\nu_j=0$ and $\nu_k > 0$ for $j+1 \leq k \leq n$ is $(\theta(j)+2\eta(j))(n-j+2)2^{n-j-3}$.  If $j=n-1$ then again by Theorem \ref{T: 100_101} and Theorem \ref{T: zero_zero} the number of Toeplitz matrices satisfying $\nu_{n-1} = 0$, $\nu_n = 1$ is $\theta(n-1)+2\eta(n-1)$.  Then we get
that the number $N(n,1)$ of $\mathcal{A}_n$'s with nullity $1$ for $n\geq 3$ is
\[ (n+3)2^{n-2}+(n+2)2^{n-3} + \sum_{j=1}^{n-2}(\theta(j)+2\eta(j))(n-j+2)2^{n-j-3} + (\theta(n-1)+2\eta(n-1)). \]

From the formulas for $\theta$ and $\eta$ in Theorem \ref{T: count invertible} we have $\theta(j) + 2\eta(j) = (2^{2j+2}-1)/3$ so for $n=3$ the expression above for $N(3,1)$ is

\[ 6\cdot 2 + 5 + (\theta(1) + 2\eta(1))\cdot 4\cdot 2^{-1} + (\theta(2)+2\eta(2)) = 12 + 5 + 5\cdot 2 + 21 = 48 = 2^{4}+2^5.\]

Now suppose for some $n\geq 3$ that $N(n,1) = 2^{2n-2} + 2^{2n-1}$.  Replacing $n$ with $n+1$ in the expression for $N(n,1)$ above, giving the expression for $N(n+1,1)$,
we obtain the following expression for $N(n+1,1)-2N(n,1)$:
\[N(n+1,1)-2N(n,1) = 2^{n-1} + 2^{n-2} + \sum_{j-1}^{n-2}(\theta(j)+2\eta(j))2^{n-j-2} + (\theta(n) + 2\eta(n)),\] or
\[N(n+1,1)-2N(n,1) = 2^{n-1} + 2^{n-2} + \sum_{j=1}^{n-2} \frac{2^{2j+2}-1}{3}2^{n-j-2} + \frac{2^{2n+2}-1}{3}. \]  A routine calculation shows that this expression is equal to $2^{2n-1} + 2^{2n}$.  Therefore,
\[N(n+1,1) = 2N(n,1) +  2^{2n-1} + 2^{2n} = 2(2^{2n-2}+2^{2n-1}) = 4(2^{2n-2}+2^{2n-1}) = 2^{2n}+2^{2n+1}.\]
This establishes the induction step and the result is proved.

\end{proof}

\begin{definition}\label{D: P_T}  We use the notation $P(m,k)$ and $T(n,k)$ to denote the following.
\begin{enumerate}
\item For $m\in \mathbb{N}\cup \{0\}$ and $k\in \mathbb{N}$ let $P(m,k)$ denote the number of nullity strings $\nu_0,\nu_1,\dots,\nu_m$ for which $\nu_j >0$, for $j=0,\dots,m$, and $\nu_m = k$.
\item For $n\in \mathbb{N}\cup \{0\}$ and $k\in \mathbb{N}$ let $T(n,k)$ denote the number of Toeplitz matrices $\mathcal{A}_n$ for which $\nu_n = k$.
\end{enumerate}
\end{definition}

\begin{remark}\label{R: P_prelims} The following observations will be useful.
\begin{enumerate}
\item  The only nullity string $\nu_0,\dots,\nu_{k-1}$ of length $k$ ending in $k$ is $1,2,\dots,k$, and there is only one Toeplitz matrix $\mathcal{A}_{k-1}$ with this string, so $P(k-1,k)$ = 1.  Clearly $P(m,k) = 0$ for $m < k-1$.
\item  From Theorem \ref{T: nonzero_string_end_s} above we have $P(m,1) = P(m+k-1,k)$ for all positive integers $k$.
\end{enumerate}
\end{remark}
We now show, in fact, that $T(n+k-1,k) = T(n,1)$ for all $m \geq 0$ and positive integers $k$. Recall from Theorem \ref{T: null_1} that $T(n,1)$ was calculated by dividing the set of all $\mathcal{A}_n$'s with nullity $1$ into disjoint sets $S^{(n)},S_0^{(n)},\dots,S_{n-1}^{(n)}$ where $S^{(n)}$ is the set of $\mathcal{A}_n$'s whose string $\nu_0,\nu_1,\dots,\nu_{n}$ has no zeroes, $S_0^{(n)}$ consists of those $\mathcal{A}_n$'s for which $\nu_0 = 0$ and $\nu_j > 0$ for $j=1,2,\dots,n$,
$S_1^{(n)}$ consists of those $\mathcal{A}_n$'s for which $\nu_1 = 0$ and $\nu_j > 0$ for $j=2,\dots,n,\,\,$and so on, up through $S_{n-1}^{(n)}$ the set of all of those $\mathcal{A}_n$'s for which $\nu_{n-1} = 0$ and $\nu_n=1$. Then by the proof of Theorem \ref{T: null_1}, $\vert S^{(n)} \vert = P(n,1), \vert S_0^{(n)} \vert = P(n-1,1), \vert S_j^{(n)} \vert = (\theta(j)+2\eta(j))P(n-j-1,1)$ for $j=1,\dots,n-2$ and $\vert S_{n-1}^{(n)} \vert = \theta(n-1)+2\eta(n-1)$.  Similarly the set of all Toeplitz matrices $\mathcal{A}_{n+k-1}$ with nullity $k$ splits into disjoint sets $S^{(n+k-1)},S_0^{(n+k-1)},\dots,S_{n-1}^{(n+k-1)}$ where $S^{(n+k-1)}$ consists of all those Toeplitz matrices $\mathcal{A}_{n+k-1}$ for which $\nu_0,\dots,\nu_{n+k-1}$ has no zeroes and $\nu_{n+k-1}=k$, and for $j=0,1,\dots,n-1, S_j^{(n+k-1)}$ consists of those for which $\nu_j = 0$ while $\nu_{j+1}$ through $\nu_{n+k-1}$ are nonzero and $\nu_{n+k-1}=k$.  By Theorem \ref{T: nonzero_string_end_s}, and arguing as in the proof of Theorem \ref{T: null_1}, we deduce that
$\vert S^{n+k-1} \vert = P(n+k-1,k), \vert S_0^{(n+k-1)} \vert = P(n+k-2,k), \vert S_j^{(n+k-1)}\vert = (\theta(j)+2\eta(j))P(n-j+k-2,k)$ for $j=1,\dots,n-2$, and $\vert S_{n-1}^{(n+k-1)} \vert = \theta(n-1)+2\eta(n-1)$.  Since $P(r,1) = P(r+k-1,k)$ by the remark above, however, and $\vert S^{(n)} \vert + \sum_{j=0}^{n-1} \vert S_j^{(n)}\vert $ is the number of $\mathcal{A}_n$'s with nullity $1$ (resp., $\vert S^{(n+k-1)} \vert + \sum_{j=0}^{n-1} S_j^{n+k-1)}$ is the number of matrices $\mathcal{A}_{n+k-1}$ with nullity $k$, we have shown the following.
\begin{theorem}\label{T: T_count}
For any positive integer $k$ and any non-negative integer $n$, $T(n+k-1,k) = T(n,1)$.
\end{theorem}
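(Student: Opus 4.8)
The plan is to imitate, almost verbatim, the partition argument by which $T(n,1)$ was computed in the proof of Theorem~\ref{T: null_1}. Fix a positive integer $k$ and a non-negative integer $n$. On one side, partition the set of all $\mathcal{A}_n$ with $\nu_n=1$ into disjoint classes $S^{(n)},S_0^{(n)},\dots,S_{n-1}^{(n)}$ according to the position of the \emph{last} zero in the nullity string $\nu_0,\dots,\nu_n$, where $S^{(n)}$ collects the strings with no zero at all, and $S_j^{(n)}$ collects those with $\nu_j=0$ and $\nu_{j+1},\dots,\nu_n$ all positive. On the other side, partition the set of all $\mathcal{A}_{n+k-1}$ with $\nu_{n+k-1}=k$ into $S^{(n+k-1)},S_0^{(n+k-1)},\dots,S_{n-1}^{(n+k-1)}$ by exactly the same rule. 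In both cases the total, namely $T(n,1)$ and $T(n+k-1,k)$ respectively, is the sum of the sizes of the classes, so it suffices to check $\vert S^{(n)}\vert=\vert S^{(n+k-1)}\vert$ and $\vert S_j^{(n)}\vert=\vert S_j^{(n+k-1)}\vert$ for each $j$.

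Next I would read off the class sizes exactly as in the proof of Theorem~\ref{T: null_1}. For a string in $S_j$ the initial segment $\nu_0,\dots,\nu_j$ ends in $\nu_j=0$ with $\nu_{j-1}\in\{0,1\}$; the number of ways to realize this initial segment by matrices $\mathcal{A}_0\rightharpoonup\cdots\rightharpoonup\mathcal{A}_j$, multiplied by the multiplicity of the forced step $\nu_j=0\to\nu_{j+1}=1$ (which by Remark~\ref{R: counting_summary}(2),(4) is $1$ when $\nu_{j-1}=0$ and $2$ when $\nu_{j-1}=1$), contributes the factor $\theta(j)+2\eta(j)$, a quantity independent of $k$. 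The strictly positive tail $\nu_{j+1},\dots$ then contributes a factor computed purely from the counting rules of Remark~\ref{R: counting_summary}, and, invoking Theorem~\ref{positive_string} and the corollary following it as in the proof of Theorem~\ref{T: null_1}, one gets $\vert S^{(n)}\vert=P(n,1)$, $\vert S_0^{(n)}\vert=P(n-1,1)$, $\vert S_j^{(n)}\vert=(\theta(j)+2\eta(j))\,P(n-j-1,1)$ for $1\le j\le n-2$, and $\vert S_{n-1}^{(n)}\vert=\theta(n-1)+2\eta(n-1)$. The identical bookkeeping on the nullity-$k$ side yields $\vert S^{(n+k-1)}\vert=P(n+k-1,k)$, $\vert S_0^{(n+k-1)}\vert=P(n+k-2,k)$, $\vert S_j^{(n+k-1)}\vert=(\theta(j)+2\eta(j))\,P(n-j+k-2,k)$ for $1\le j\le n-2$, and $\vert S_{n-1}^{(n+k-1)}\vert=\theta(n-1)+2\eta(n-1)$.

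Finally I would invoke Remark~\ref{R: P_prelims}(2), i.e.\ the identity $P(r,1)=P(r+k-1,k)$ which is the string-counting consequence of the bijection in Theorem~\ref{T: nonzero_string_end_s}. Taking $r=n$, $r=n-1$, and $r=n-j-1$ in turn matches $\vert S^{(n)}\vert$ with $\vert S^{(n+k-1)}\vert$, $\vert S_0^{(n)}\vert$ with $\vert S_0^{(n+k-1)}\vert$, and each $\vert S_j^{(n)}\vert$ with $\vert S_j^{(n+k-1)}\vert$; the last classes coincide outright. Summing the matched class sizes gives $T(n,1)=T(n+k-1,k)$. The degenerate small-$n$ cases, where some classes are empty or the range of $j$ collapses, should be verified separately, just as the base cases $n\le 2$ were handled in the proof of Theorem~\ref{T: null_1}.

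The step I expect to be the real obstacle is the assertion, used implicitly in the second paragraph, that the number of Toeplitz matrices realizing a fixed strictly positive nullity tail depends only on the \emph{shape} of that tail — the pattern of climbs, plateaus, and descents — and not on the absolute nullity values, so that uniformly adding $1$ to a stretch of the string (as the bijection of Theorem~\ref{T: nonzero_string_end_s} does) leaves every step-multiplicity unchanged and hence matches matrix counts, not merely string counts. Concretely one has to confirm that each entry of Remark~\ref{R: counting_summary} assigns a multiplicity ($1$ for a climb $d-1,d\to d+1$; $2q-2$ for a plateau onset $d-1,d\to d$; $q$ for a plateau continuation $d,d\to d$; $(q-1)^2$ for a peak descent; $q^2$ for a descent following a descent) that is a function of the local shape alone. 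Granting that invariance — which is essentially the same fact already used inside the proof of Theorem~\ref{T: nonzero_string_end_s} — the term-by-term comparison is routine.
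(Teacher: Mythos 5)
Your proposal is correct and follows essentially the same route as the paper: the paper likewise partitions the nullity-$1$ matrices $\mathcal{A}_n$ and the nullity-$k$ matrices $\mathcal{A}_{n+k-1}$ by the position of the last zero in the nullity string, records the same class sizes $P(n,1)$, $P(n-1,1)$, $(\theta(j)+2\eta(j))P(n-j-1,1)$, $\theta(n-1)+2\eta(n-1)$ (and their counterparts with $P(\cdot,k)$), and matches them term by term via $P(r,1)=P(r+k-1,k)$ from Theorem~\ref{T: nonzero_string_end_s}. The ``shape-invariance'' of the step multiplicities that you flag as the potential obstacle is indeed exactly the content already packaged into Theorem~\ref{T: nonzero_string_end_s}, so no new work is needed there.
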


We can combine the two preceding results to obtain the following.

\begin{corollary}  The number of $(n+1)\times (n+1)$ Toeplitz matrices $\mathcal{A}_n$ with nullity $1$ is $2^{2n-2} + 2^{2n-1}$, with nullity $2$ is $2^{2n-4}+2^{2n-3}$, and so on.  The number with nullity $n$ is $3$ and the number with nullity $n+1$ is $1$ (the zero matrix).
\end{corollary}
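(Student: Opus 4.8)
The plan is to combine Theorem~\ref{T: T_count} with Theorem~\ref{T: null_1}, dispatching the extreme nullity $k=n+1$ by an elementary observation; essentially all the content has already been done in those two results, so this is mostly index bookkeeping.

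First I would apply Theorem~\ref{T: T_count}. Fix $n\ge 1$ and a target nullity $k$ with $1\le k\le n+1$, and set $m=n-k+1\ge 0$. Since $n=m+k-1$, Theorem~\ref{T: T_count} gives
\[
T(n,k)=T(m+k-1,k)=T(m,1)=T(n-k+1,1),
\]
so counting $(n+1)\times(n+1)$ Toeplitz matrices of nullity $k$ is the same as counting $(n-k+1)\times(n-k+1)$ Toeplitz matrices of nullity $1$. (Here $T(\cdot,1)=N(\cdot,1)$ by Definitions~\ref{D: null_num} and~\ref{D: P_T}.)

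Next, for $1\le k\le n$ we have $n-k+1\ge 1$, so Theorem~\ref{T: null_1} applies to the right-hand side and yields
\[
T(n,k)=T(n-k+1,1)=2^{2(n-k+1)-2}+2^{2(n-k+1)-1}=2^{2(n-k)}+2^{2(n-k)+1}.
\]
Specializing $k=1,2,3,\dots$ reproduces $2^{2n-2}+2^{2n-1}$, then $2^{2n-4}+2^{2n-3}$, and so on down the list, and at $k=n$ one gets $2^{0}+2^{1}=3$, exactly the asserted values.

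Finally, the case $k=n+1$ lies just outside the range where the closed formula of Theorem~\ref{T: null_1} is stated (it would require feeding $m=0$ into an expression written for $m\in\mathbb{N}$), so I would handle it directly: an $(n+1)\times(n+1)$ matrix has nullity $n+1$ precisely when it is the zero matrix, which is Toeplitz, hence $T(n,n+1)=1$; equivalently $T(n,n+1)=T(0,1)$ and $[0]$ is the unique $1\times 1$ Toeplitz matrix of nullity $1$. There is no genuine obstacle in the argument—the substance is entirely in Theorems~\ref{T: T_count} and~\ref{T: null_1}—so the only points needing care are the substitution $m=n-k+1$ and the reminder that the formula of Theorem~\ref{T: null_1} requires $m\ge 1$, which is exactly why $k=n+1$ is treated separately.
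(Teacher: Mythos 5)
Your proof is correct and follows essentially the same route as the paper, which likewise obtains the corollary by combining Theorem \ref{T: T_count} with Theorem \ref{T: null_1}; your explicit treatment of the edge case $k=n+1$ (where $T(n,n+1)=T(0,1)=1$ falls outside the stated range of Theorem \ref{T: null_1}) is a welcome clarification of a detail the paper leaves implicit.
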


\begin{remark}\label{R: Daykin} With this corollary and Theorem \ref{T: count invertible} we have recovered a result of D. Daykin \cite{Day} on the enumeration of Toeplitz matrices over $GF(2)$ of specified rank.  (In fact, Daykin has made these computations over any finite field, see also \cite{KalL}).
\end{remark}

\section*{Acknowledgments}\label{S: acknowl}
The authors are grateful to the referee for helpful comments and for bringing references \cite{ER13}, \cite{HR84} and \cite{HR02} to our attention.

\end{document}